\renewcommand\P{\mathcal{P}}                
\newcommand\Q{\mathcal{Q}}                
\newcommand\x{{\mathbf{x}}} 
\renewcommand\a{{\mathbf{a}}} 
\newcommand\field{{\mathbbm{k}}} 
\newcommand\R{{\mathbb{R}}} 
\newcommand\C{{\mathbb{C}}} 
\newcommand\N{{\mathbb{N}}} 
\newcommand\Z{{\mathbb{Z}}} 
\newcommand\ZO{\{0,1\}} 
\newcommand\puiseux{\C\{\!\{z\}\!\}} 
\newcommand\generalizedpuiseux{{\C\{\!\{z\}\!\}^\text{gen}}} 
\newcommand\Sym{\operatorname{Sym}}
\renewcommand\a{\mathbf{a}}         
\renewcommand\b{\mathbf{b}}         
\newcommand\res{\mathcal{F}}        
\newcommand\ideal[1]{\langle{#1}\rangle}
\newcommand\iform{\operatorname{in}}
\newcommand\ct{\mathbf{t}}      
\newcommand\cct{\overline{\ct}}      
\newcommand\dt{\mathbf{d}}      
\newcommand\ft{T}               
\newcommand\fct{\overline{\ft}} 
\newcommand\cI{I_{\ct(\A)}}             
\newcommand\ccI{I_{\overline{\ct}(\A)}} 
\newcommand\fI{I_{\ft(\A)}}           
\newcommand\fcI{I_{\fct(\A)}}           
\newcommand\Rad{\operatorname{Rad}} 
\newcommand\dist{\operatorname{dist}} 
\newcommand\conv{\operatorname{conv}} 
\newcommand\tconv{\operatorname{tconv}} 
\newcommand\trop{\operatorname{trop}} 
\newcommand\vones{\mathbbm{1}} 
\newcommand{\SetOf}[2]{\left\{#1\,:\,#2\right\}}
\newcommand{\smallSetOf}[2]{\{#1:#2\}}
\newcommand\T{\mathbb{T}} 
\newcommand\A{{\mathcal{A}}} 
\newcommand\CD{\mathcal{C}} 
\newcommand\boundedCD{\mathcal{B}} 
\newcommand\m{{\mathfrak{m}}} 
\newcommand\vol{\operatorname{Vol}} 
\newcommand\ccut[1]{\operatorname{CrossCut}(#1)} 
\newcommand\Cayley{\operatorname{Cayley}} 
\newtheorem{thm}{Theorem}[section]
\newtheorem{cor}[thm]{Corollary}
\newtheorem{lem}[thm]{Lemma}
\newtheorem{prop}[thm]{Proposition}
\theoremstyle{definition}
\newtheorem{dfn}[thm]{Definition}
\newtheorem{example}[thm]{Example}
\newtheorem{rem}[thm]{Remark}
\begin{document}
\setdefaultitem{$\triangleright$}{}{}{}

\title[Tropical types and associated cellular resolutions]%
{Tropical types and\\ associated cellular resolutions}

\author{Anton Dochtermann}
\address{Anton Dochtermann\\
         Department of Mathematics\\
         6188 Kemeny Hall\\
         Dartmouth College\\
         Hanover, NH 03755\\
         USA}
\email{anton.dochtermann@gmail.com}
\thanks{Research of Dochtermann supported by an Alexander von Humboldt postdoctoral fellowship.}

\author{Michael Joswig}
\address{Michael Joswig\\
         Fachbereich Mathematik\\
         TU Darmstadt\\
         Dolivostr. 15\\
         64293 Darmstadt\\
         Germany}
\email{joswig@mathematik.tu-darmstadt.de}
\thanks{Research of Joswig supported by Deutsche Forschungsgemeinschaft, DFG Research Unit ``Polyhedral Surfaces''.}

\author{Raman Sanyal}
\address{Raman Sanyal\\
	Department of Mathematics\\
        University of California\\
        Berkeley, California 94720 \\
        USA}
\email{sanyal@math.berkeley.edu}
\thanks{Research of Sanyal supported by the \emph{Konrad-Zuse-Zentrum f\"{u}r
Informationstechnik} (ZIB) and a \emph{Miller Research Fellowship}.}

\begin{abstract}
  An arrangement of finitely many tropical hyperplanes in the tropical torus~$\T^{d-1}$
  leads to a notion of `type' data for points in $\T^{d-1}$, with the underlying unlabeled
  arrangement giving rise to `coarse type'.  It is shown that the decomposition of
  $\T^{d-1}$ induced by types gives rise to minimal cocellular resolutions of certain
  associated monomial ideals.  Via the Cayley trick from geometric combinatorics this also
  yields cellular resolutions supported on mixed subdivisions of dilated simplices,
  extending previously known constructions.  Moreover, the methods developed lead to an
  algebraic algorithm for computing the facial structure of arbitrary tropical complexes
  from point data.
\end{abstract}

\maketitle

\section{Introduction}

\noindent
The study of convexity over the tropical semiring has been an area of active research in
recent years.  Fundamental properties of tropical convexity, in particular from a 
combinatorial perspective, were established by Develin and Sturmfels in \cite{DevStu04}.
There the notion of a tropical polytope was defined as the tropical convex hull of a
finite set of points in the tropical torus $\T^{d-1}$.  Fixing the set of generating
points yields a decomposition of the tropical polytope called the \emph{tropical complex},
and in \cite{DevStu04} it was shown that the collection of such complexes are in bijection
with the regular subdivisions of a product of simplices.  The origin of tropical convexity
can be traced back to the study of `max-plus linear algebra'; see
\cite{LitvinovMaslovShpiz01, CohenGaubertQuadrat05} and the references therein for a
proper account.

A tropical complex can be realized as the subcomplex of bounded cells of the polyhedral
complex arising from an arrangement of tropical hyperplanes, and it is this
perspective that we adopt in this paper.  We study combinatorial properties of
arrangements of tropical hyperplanes, and in particular their relation to
algebraic properties of associated monomial ideals.  A tropical hyperplane in
$\T^{d-1}$ is defined as the locus of `tropical vanishing' of a linear form,
and can be regarded as a fan polar to a $(d-1)$-dimensional simplex.  In
this way each tropical hyperplane divides the ambient space $\T^{d-1}$ into
$d$ \emph{sectors}.  Given an arrangement $\A$ of tropical hyperplanes and a
point $p \in \T^{d-1}$, one can record the position of $p$ with respect to
each sector of each hyperplane.  This tropical analog of the covector data of
an oriented matroid is called the \emph{type} data, and the combinatorial
approach to tropical convexity taken in \cite{DevStu04} is based on this
concept.  Here we consider a coarsening of the type data (which we call
\emph{coarse type}) arising from an arrangement $\A$ of tropical hyperplanes,
amounting to neglecting the labels on the individual hyperplanes.

The connection between tropical polytopes/complexes and resolutions of monomial ideals was
first exploited by Block and Yu in \cite{BlockYu06}.  There the authors associate a
monomial ideal to a tropical polytope with generators in general position, and use
algebraic properties of its minimal resolution to determine the facial structure of the
bounded subcomplex.  Further progress along these lines was made in
\cite{DevelinYu07}. The primary tool employed in this context is that of a \emph{cellular
  resolution} of a monomial ideal $I$, where the $i$-th syzygies of $I$ are encoded by the
$i$-dimensional faces of a polyhedral complex (see Section \ref{sec:resolutions}). The
ideals from \cite{BlockYu06} are squarefree monomials ideals generated by the
\emph{cotype} data, i.e.~the complements of the tropical covectors, arising from the
associated arrangement of hyperplanes, and hence can be seen as a tropical analog of the
\emph{(oriented) matroid ideals} studied by Novik, Postnikov and Sturmfels in
\cite{NovPosStu}.

In this paper we study the polyhedral complex $\CD_\A$ (and its bounded subcomplex
$\boundedCD_\A$) induced by the type data of an arrangement $\A$ of $n$ hyperplanes in
$\T^{d-1}$.  Both complexes are naturally labeled by fine and coarse type and cotype data,
and we show how the resulting labeled complexes support minimal (co)cellular resolutions
of associated monomial ideals.  We pay special attention to labels given by coarse type.
For instance, we show that $\CD_\A$ supports a minimal \emph{cocellular} resolution of the
ideal $\cI$ generated by monomials corresponding to the set of all coarse types.  The
proof involves a consideration of the topology of certain subsets of $\CD_\A$ as well as
the combinatorial properties of the coarse type labelings.  When the arrangement $\A$ is
\emph{sufficiently generic} we show that the resulting ideal is always given by $\langle
x_1, \dots, x_d \rangle^n$, the $n$-th power of the maximal homogeneous ideal; in general,
$\cI$ is some Artinian subideal.  Our results in this area are all independent of the
characteristic of the coefficient field.

Via the connection to products of simplices and the Cayley trick we interpret these
results in the context of mixed subdivisions of dilated simplices.  In particular we
obtain a minimal \emph{cellular} resolution of $\cI$ supported on a subcomplex of the
dilated simplex $n \Delta_{d-1}$.  One other direct consequence is that \emph{any} regular
fine mixed subdivision of $n \Delta_{d-1}$ supports a minimal resolution of $\langle x_1,
\dots, x_d \rangle^n$.  This extends a result of Sinefakopoulos from \cite{Sine08} where a
particular subdivision is considered (although much less explicitly), and also complements
a construction of Engstr\"{o}m and the first author from \cite{DocEng08} where such
complexes are applied to resolutions of hypergraph edge ideals.  The duality between
tropical complexes and mixed subdivisions of dilated simplices was established in
\cite{DevStu04}, and we show how this extends to the algebraic level in terms of Alexander
duality of our resolutions of the coarse type and cotype ideals.

Finally, we show how these algebraic results lead to observations regarding the
combinatorics of tropical polytopes/complexes and mixed subdivisions of dilated simplices.
We obtain a formula for the $f$-vector of the bounded subcomplex of an arbitrary tropical
hyperplane arrangement in terms of the Betti numbers of the associated coarse type ideal.
The uniqueness of minimal resolutions also implies that for any sufficiently generic
arrangement $\A$, the multiset of coarse types is independent of the arrangement.
Furthermore, we present an algorithm for determining the incidence face structure of a
tropical complex from the coordinates of the generic set of vertices, utilizing the fact
that such a complex supports a minimal resolution of the square-free monomial cotype
ideal.  This approach was first introduced by Block and Yu in \cite{BlockYu06} for the
case of sufficiently generic arrangements, and we extend the algorithm to the general
case.

The rest of the paper is organized as follows.  In Section \ref{sec:coarse} we review the
basic notions of tropical convexity including tropical hyperplanes and type data, and
discuss the polytopal complex that arises from an arrangement of tropical hyperplanes. We
introduce the notion of \emph{coarse type} and establish some of the basic properties that
will be used later in the paper.  In Section \ref{sec:resolutions} we introduce the
monomial ideals that arise from an arrangement of hyperplanes and show that the polytopal
complexes labeled by fine and coarse (co)type support cocellular (and cellular)
resolutions.  In Section \ref{sec:Mixed} we recall the construction of the Cayley trick
and use this to interpret our results in terms of mixed subdivisions of dilated simplices.
In Section \ref{sec:examples} we discuss some examples of our construction, in particular
the case of the (generic) \emph{staircase triangulation} (recovering a result of
\cite{Sine08}) as well as a family of non-generic arrangements corresponding to tropical
\emph{hypersimplices}.  In Section \ref{sec:facecounting} we show our results give rise to
certain consequences for the combinatorics (e.g., the $f$-vector) of the bounded
subcomplexes of tropical hyperplane arrangements, and also describe an algorithm for
determining the entire face poset from the coordinates of the arrangement.  This
strengthens a result from \cite{BlockYu06}.  Finally, we end in Section~\ref{sec:final}
with some concluding remarks and open questions.

We are grateful to Kirsten Schmitz for very careful proof reading.

\section{Tropical convexity and coarse types}\label{sec:coarse}

\noindent
In order to fix our notation we begin in this section with a brief review of
the foundations of tropical convexity as layed out by Develin and
Sturmfels in~\cite{DevStu04}.  We then define `coarse types' and establish some
combinatorial and topological results regarding the type decomposition of the
tropical torus induced by a finite set of points.  While some of these
observations may be worthwhile in their own right, their main interest for us
will be their applications to subsequent constructions of (co)cellular
resolutions.

\subsection{Tropical convexity and tropical hyperplane arrangements}

Tropical convexity is concerned with linear algebra over the \emph{tropical
semi-ring} $(\R,\oplus,\odot)$, where
\[
    x\oplus y \ := \ \min(x,y) \quad \text{and} \quad x\odot y \ := \ x+y \, .
\]
We will sometimes replace the operation $\min$ with $\max$, and although the two resulting
semi-rings are isomorphic via \[ -\max(x,y) \ = \ \min(-x,-y) \, , \] it
will be useful for us to consider both structures on the set $\R$
simultaneously.  To avoid confusion we will therefore use the terms
\emph{min-tropical semi-ring} and \emph{max-tropical semi-ring}, respectively.
Componentwise tropical addition and tropical scalar multiplication turn $\R^d$
into a semi-module.  The \emph{tropical torus} $\T^{d-1}$ is the quotient of
Euclidean space $\R^d$ by the linear subspace $\R\vones$, where $\vones \in
\R^d$ is the all-ones vector. By interpreting this quotient in the category of
topological spaces, $\T^{d-1}$ inherits a natural topology which is
homeomorphic to the usual topology on $\R^{d-1}$.  A set $S\subset\T^{d-1}$ is
\emph{tropically convex} if it contains $(\lambda\odot x)\oplus(\mu\odot y)$
for all $x,y\in S$ and $\lambda,\mu\in\R$.  For an arbitrary set
$S\subset\T^{d-1}$ the \emph{tropical convex hull} $\tconv(S)$ is defined as
the smallest tropically convex set containing~$S$. If the set $S$ is finite,
then $\tconv(S)$ is called a \emph{tropical polytope}.  In this paper, tropical
convexity and related notions will be studied with respect to both $\min$ and
$\max$, and hence we will also talk about \emph{max-tropically convex} sets
and the like.

The \emph{tropical hyperplane} with \emph{apex} $-a\in\T^{d-1}$ is the
set
\[
H(-a) \ := \ \SetOf{p\in\T^{d-1}}{(a_1\odot p_1)\oplus(a_2\odot
  p_2)\oplus\dots\oplus(a_d\odot p_d) \text{ is attained at least twice}} \, .
\]
That is, a tropical hyperplane is the tropical vanishing locus of a polynomial homogeneous
of degree $1$ with real coefficients.  If we want to explicitly distinguish between the
$\min$- and $\max$-versions we write $H^{\min}(-a)$ and $H^{\max}(-a)$, respectively.  Any
two $\min$-tropical (resp.\ $\max$-tropical) hyperplanes are related by an ordinary
translation, and hence a tropical hyperplane is completely determined by its apex.
Furthermore, the $\min$-tropical hyperplane with apex $0$ is a mirror image of the
$\max$-tropical hyperplane with apex $0$ under the map $x\mapsto -x$. The complement of
any tropical hyperplane in $\T^{d-1}$ consists of precisely $d$ connected components, its
\emph{open sectors}.  Each open sector is convex, in both the tropical and ordinary sense.
The \emph{$k$-th (closed) sector} of the $\max$-tropical hyperplane with apex $a$ is the
set
\[
    S_k^{\max}(a) \ := \ \SetOf{p\in\T^{d-1}}{a_k-p_k\le a_i-p_i \text{ for
    all } i\in[d]} \, .
\]
Similarly we have
\[
    S_k^{\min}(a) \ := \ \SetOf{p\in\T^{d-1}}{a_k-p_k\ge a_i-p_i \text{ for
    all } i\in[d]}
\]
for the $\min$-version.  Notice that $x\in S_k^{\max}(y)$ if and only if $y\in
S_k^{\min}(x)$.  Each closed sector is the topological closure of an open
sector.  Again the closed sectors are tropically and ordinarily convex.  A
sequence of points $V=(v_1,v_2,\dots,v_n)$ in $\T^{d-1}$ gives rise to the
arrangement
\[
    \A(V) \ := \ \big(H^{\max}(v_1),H^{\max}(v_2),\dots,H^{\max}(v_n)\big)
\]
of $n$ labeled $\max$-tropical hyperplanes.  The position of points in
$\T^{d-1}$ relative to each hyperplane in the arrangement furnishes combinatorial data and leads to the following definition.

\begin{dfn}[Fine type]\label{dfn:cotype}
  Let $\A=\A(V)$ be the arrangement of $\max$-tropical hyperplanes given by
  $V=(v_1,v_2,\dots,v_n)$ in $\T^{d-1}$.  The \emph{fine type} (or sometimes simply
  \emph{type}) of a point $p \in \T^{d-1}$ with respect to $\A$ is the table $T_\A(p) \in
  \ZO^{n \times d}$ with \[ T_\A(p)_{ik} = 1 \quad \text{if and only if} \quad p \in
  S_k^{\max}(v_i) \] for $i\in[n]$ and $k\in[d]$.
  The \emph{fine cotype} $\overline{T}_\A(p) \in \ZO^{n \times d}$ is defined as the
  complementary matrix, that is,
  \[
  \overline{T}_\A(p)_{ik} = 1 \quad \text{if and only if} \quad T_\A(p)_{ik} = 0 \, .
  \]
\end{dfn}

Let us now fix a sequence of points $V$ and the corresponding arrangement $\A=\A(V)$ in
$\T^{d-1}$. We write $T(p)$ instead of $T_\A(p)$ when no confusion arises.  For a fixed
type $T = T(p)$, the set
\[
C_T^\circ \ := \ \SetOf{ q \in \T^{d-1} }{ T(q) = T }
\]
of points in $\T^{d-1}$ with that type is a relatively open subset of
$\T^{d-1}=\R^{d-1}$; called the \emph{relatively open cell} of type $T$.  The
set $C_T^\circ$ as well as its topological closure $C_T$ are tropically and
ordinarily convex.  The set of all relatively open cells
$\CD^\circ=\CD^\circ(V)$ partitions the tropical torus $\T^{d-1}$.  Moreover,
$C_T$, the \emph{closed cell of type $T$}, is the intersection of finitely
many closed sectors.  Since each closed sector is the intersection of finitely
many polyhedral cones, this implies that $C_T$, provided it is bounded, is a
polytope in both the classical and the tropical sense; in \cite{JoswigKulas08}
these were called \emph{polytropes}.  The collection of all closed cells
yields a polyhedral subdivision $\CD=\CD(V)$ of $\T^{d-1}$. The collection of
types with the componentwise order is anti-isomorphic to the face lattice of
$\CD(V)$, that is, $T_{\A}(D) \le T_{\A}(C)$ whenever $C \subseteq D$ are
closed cells of $\CD(V)$.  The cells which are bounded form the \emph{bounded
subcomplex} $\boundedCD=\boundedCD(V)$.  Following Ardila and Develin
\cite{AD09}, a fine type should be thought of as the tropical equivalent of a
covector in the setting of tropical oriented matroids, with the cells of
maximal dimension playing the role of the topes.  The following result highlights the
relation of $\min$-tropical polytopes and $\max$-tropical hyperplane
arrangements.

\begin{thm}[{\cite[Thm.~15 and Prop.~16]{DevStu04}}]\label{thm:tconv}
    The $\min$-tropical polytope $\tconv(V)$ is the union of cells in the
    bounded subcomplex $\boundedCD(V)$ of the cell decomposition of $\T^{d-1}$
    induced by the $\max$-tropical hyperplane arrangement $\A(V)$.
\end{thm}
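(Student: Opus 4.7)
The plan is to characterize both $\tconv(V)$ and the support $|\boundedCD(V)|$ by the same combinatorial condition on fine type: namely that each column of $T_\A(p)$ contains at least one~$1$. Since fine type is constant on each relatively open cell $C_T^\circ$ and $\CD(V)$ partitions $\T^{d-1}$, the two characterizations together will identify $\tconv(V)$ with the union of the bounded closed cells.

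First I would show that $p\in\tconv(V)$ iff every column of $T_\A(p)$ contains a~$1$. Writing a $\min$-tropical combination $p = \bigoplus_i(\lambda_i\odot v_i)$ coordinatewise yields $p_k=\min_i(\lambda_i+v_{i,k})$, which decomposes into the inequalities $\lambda_i+v_{i,k}\geq p_k$ for all $i,k$ together with the condition that, for each $k$, some $i$ achieves equality. The inequalities are equivalent to $\lambda_i\geq\max_k(p_k-v_{i,k})$, and taking the tight choice $\lambda_i:=\max_k(p_k-v_{i,k})$, the equality $\lambda_i+v_{i,k}=p_k$ holds exactly when $k$ achieves $\min_j(v_{i,j}-p_j)$, i.e.\ when $p\in S_k^{\max}(v_i)$. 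Hence a valid representation exists iff for every $k$ there is some $i$ with $p\in S_k^{\max}(v_i)$, which is precisely the column condition on $T_\A(p)$.

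Second I would show that the closed cell $C_T$ is bounded in $\T^{d-1}$ iff each column of $T$ has a~$1$. Unpacking the sector definitions, the condition $T_\A(q)=T$ imposes on $C_T^\circ$ the equalities $q_k-q_{k'}=v_{i,k}-v_{i,k'}$ whenever $T_{ik}=T_{ik'}=1$ and the strict inequalities $q_{k'}-q_k < v_{i,k'}-v_{i,k}$ whenever $T_{ik}=1$ and $T_{ik'}=0$; these relations persist on the closure $C_T$ with $<$ weakened to $\leq$. If some column $k_0$ of $T$ is zero, then the only surviving constraints involving $q_{k_0}$ are of the form $q_{k_0}-q_k\leq v_{i,k_0}-v_{i,k}$, so the ray $q\mapsto q-t\,e_{k_0}$ ($t\geq 0$) stays inside $C_T$ and witnesses unboundedness modulo $\R\vones$. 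Conversely, if every column of $T$ has a~$1$, then for any pair $k,k'$ one may choose $i$ with $T_{ik}=1$ and $j$ with $T_{jk'}=1$: the two associated inequalities supply simultaneously a lower and an upper bound on $q_k-q_{k'}$, and bounding every coordinate difference bounds $C_T$ in $\T^{d-1}$.

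Combining the two steps, $p\in|\boundedCD(V)|$ iff every column of $T_\A(p)$ has a~$1$ iff $p\in\tconv(V)$, proving the theorem. The main obstacle I anticipate is the boundedness analysis: one must keep careful track of which sector (in)equalities survive closure, and pass cleanly between $\R^d$-directions and the quotient $\T^{d-1}=\R^d/\R\vones$ so that an ``unbounded ray'' in coordinates actually witnesses unboundedness of the cell in the tropical torus.
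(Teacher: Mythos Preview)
Your proposal is correct. Note, however, that the present paper does not actually prove Theorem~\ref{thm:tconv}; it is quoted from Develin and Sturmfels~\cite{DevStu04} (their Theorem~15 and Proposition~16), so there is no in-paper proof to compare against. Your two-step argument is essentially the argument given in that reference: Step~1 is their Proposition~16, and Step~2 is their Corollary~12. The only place where your write-up differs from the surrounding paper is in the boundedness direction: the paper records your Step~2 separately as Proposition~\ref{prop:bounded_coarse} (stated in terms of coarse type, which is equivalent) and proves it in one line via the duality $p\in S_k^{\max}(q)\Leftrightarrow q\in S_k^{\min}(p)$, rather than by explicitly exhibiting an escaping ray and pairwise coordinate-difference bounds as you do. Both arguments are equally valid; the sector-duality formulation is shorter, while your explicit inequality bookkeeping makes the polyhedral picture more transparent and directly addresses the $\R^d$-versus-$\T^{d-1}$ issue you flagged as an obstacle.
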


The polytopal complex $\boundedCD(V)$ induced by types is a subdivision of the
tropical polytope $\tconv(P)$ called the \emph{tropical complex} generated
by $V$. The points $V$ are in tropically \emph{general position} if the
combinatorial type of $\CD(V)$ (or equivalently $\boundedCD(V)$) is invariant
under small perturbations.  In Section~\ref{sec:Mixed}, we will elaborate on the
important connection between tropical hyperplane arrangements and mixed
subdivisions of dilated simplices. In light of this connection, the points $V$
are in general position if the associated mixed subdivision is \emph{fine}.

At this point we introduce our running example, borrowed from
\cite[Ex.~10]{BlockYu06}.

\begin{example}\label{exmp:BY:cells}
    The points $v_1=(0,3,6)$, $v_2=(0,5,2)$, $v_3=(0,0,1)$, and $v_4=(1,5,0)$
    give rise to the max-tropical hyperplane arrangement shown in
    Figure~\ref{fig:BY:arrangement}.  It decomposes the tropical torus $\T^2$
    into $15$ two-dimensional cells, three of which are bounded.  Note
    that there is precisely one bounded cell of dimension one (incident with
    the $0$-cell $v_1$) which is maximal with respect to inclusion.  This
    shows that the polytopal complex $\boundedCD(V)$ need not be pure.
    Moreover, one can check that the dual regular subdivision of
    $\Delta_3\times\Delta_2$ is a triangulation, and hence these four points
    are sufficiently generic.
\end{example}

\subsection{Coarse types}\label{subsec:coarsetypes}

As we have seen, the fine type records the position of a point relative to a labeled tropical
hyperplane arrangement.  Neglecting the labels on the hyperplanes leads to the
following coarsening of the type information.

\begin{dfn}[Coarse type]
    Let $\A=\A(V)$ be an arrangement of $n$ $\max$-tropical hyperplanes in
    $\T^{d-1}$. The \emph{coarse type} of a point $p \in \T^{d-1}$ with
    respect to $\A$ is given by $\ct_\A(p)=(t_1,t_2,\dots,t_d) \in \N^d$ with
    \[
        t_k \ = \ \sum_{i=1}^n  T_\A(p)_{ik}
    \]
    for $k \in [d]$.
\end{dfn}

The coarse type entry $t_k$ records for how many hyperplanes in $\A$ the point $p$ lies in
the $k$-th closed sector.  Again we will write $\ct(p)$ when no confusion can arise.  By
definition $\ct$ is constant when restricted to a relatively open cell $C_T^\circ$.  The
induced map $\CD^\circ \rightarrow \N^d$ is also denoted by $\ct$.

\begin{figure}[htb]
    \includegraphics[scale=0.8]{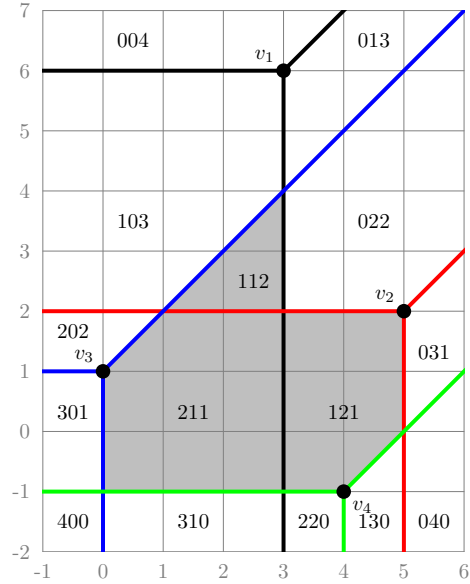}
    \caption{Coarsely labeled type decomposition of $\T^2$ induced by four
    max-tropical lines.  Bounded cells are shaded.}
    \label{fig:BY:arrangement}
\end{figure}

Objects in tropical geometry can be lifted to objects in classical algebraic
geometry by considering fields of formal Puiseux series and their valuations.
Following this path leads to another interpretation of the coarse types: For
$v_i \in \T^{d-1}$ the $\max$-tropical hyperplane $H^{\max}(v_i)$ is the
\emph{tropicalization} $\trop^{\max}(h_i)$ of a hyperplane given as the
vanishing of a homogeneous linear form $h_i\in\puiseux[x_1,x_2,\dots,x_d]$
defined over the field of Puiseux series. 
Let
\begin{equation}\label{eq:h}
  h \ = \ h_1 \cdot h_2 \cdots h_n
\end{equation}
be the product of these $n$ linear forms, so that $h$ is a homogeneous
polynomial of degree $n$ in $d$ indeterminates.  Notice that there is no
canonical choice for the polynomials $h_i$ and hence not for $h$.  In the
computation below we choose $h_i$ to be
$z^{-v_{i1}}x_1+z^{-v_{i2}}x_2+\dots+z^{-v_{id}}x_d$.

\begin{rem}
  Note that the classical Puiseux series have rational exponents, and
  therefore the valuation map takes rational values only.  For this reason the
  tropical hypersurface of a tropical polynomial is often defined as the
  topological closure of the vanishing locus of a tropical polynomial; e.g.,
  see Einsiedler, Kapranov, and Lind~\cite{EinsiedlerKapranovLind06}.  By
  extending $\puiseux$ to a field of \emph{generalized Puiseux series} one can
  directly deal with real exponents, as the corresponding valuation map is
  onto the reals; for a construction see Markwig~\cite{Markwig07}.  In the
  sequel we will make use of such a field of generalized Puiseux series, and
  we denote it by~$\generalizedpuiseux$.
\end{rem}

\begin{prop}
  The tropical hypersurface defined by $\trop^{\max}(h)$ is the union of the
  $\max$-tropical hyperplanes in $\A$.
\end{prop}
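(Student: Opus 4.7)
The plan is to decompose the claim into two pieces: (i) the tropicalization of a product of polynomials equals the pointwise sum of the tropicalizations, and (ii) the tropical hypersurface of such a tropical sum decomposes as the union of the tropical hypersurfaces of the summands. Combined with the identification of $\trop^{\max}(h_i)$ as the tropical linear form cutting out $H^{\max}(v_i)$, this yields the proposition.

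For (i), I would work pointwise on $\T^{d-1}$, using that the valuation on $\generalizedpuiseux$ is multiplicative together with the elementary identity $\max_\alpha A_\alpha + \max_\beta B_\beta = \max_{\alpha,\beta}(A_\alpha + B_\beta)$. This shows $\trop^{\max}(fg) = \trop^{\max}(f) + \trop^{\max}(g)$ as piecewise-linear functions for any $f,g \in \generalizedpuiseux[x_1,\dots,x_d]$, and applied inductively gives $\trop^{\max}(h) = \sum_{i=1}^n \trop^{\max}(h_i)$. With the explicit choice $h_i = z^{-v_{i1}}x_1+\cdots+z^{-v_{id}}x_d$, the function $\trop^{\max}(h_i)(p) = \max_k(v_{ik}+p_k)$ is non-smooth at $p$ precisely when this maximum is attained at least twice, which is exactly the defining condition for $H^{\max}(v_i)$ (and is compatible with the closed sector description $S_k^{\max}(v_i)$ introduced earlier).

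For (ii), I would argue combinatorially: writing $F = \sum_i F_i$ with $F_i = \trop^{\max}(h_i)$, a summand of the expanded tropical expression for $F$ attains the overall maximum at $p$ if and only if the chosen monomial of each $F_i$ attains $F_i(p)$. Hence two distinct maximizers of $F$ at $p$ must differ in at least one factor $F_i$, forcing $F_i$ to have two maximizing monomials at $p$; conversely, if some $F_i$ has two maximizers at $p$, combining with a maximizer of each other factor yields two distinct maximizers of $F$ there. Consequently the tropical hypersurface of $F$ is the union of the tropical hypersurfaces of the $F_i$, which by the previous step equals $\bigcup_i H^{\max}(v_i)$.

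The point requiring care is that when one expands $h$ as an honest polynomial over $\generalizedpuiseux$, additive cancellations can occur among the coefficients of individual monomials, so the tropical polynomial read off from the monomial expansion of $h$ could a priori have fewer terms than the ``formal'' tropical product of the $\trop^{\max}(h_i)$. This is not actually an obstacle, because the tropical hypersurface is the non-smoothness locus of the associated convex piecewise-linear function, which by (i) is unambiguously $\sum_i \trop^{\max}(h_i)$ regardless of the monomial presentation of $h$. Alternatively one can bypass the issue entirely by invoking Kapranov's theorem to identify the tropical hypersurface of $h$ with the closure of the image of the classical vanishing locus of $h$ under the negative valuation, and then use the set-theoretic equality that this vanishing locus is the union of the vanishing loci of the $h_i$.
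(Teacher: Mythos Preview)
The paper states this proposition without proof, so there is no argument of theirs to compare against; your two-step plan is the standard route and is correct.

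Two small points. First, a sign slip: with the paper's choice $h_i=\sum_k z^{-v_{ik}}x_k$ one gets $\trop^{\max}(h_i)(p)=\max_k(p_k-v_{ik})$ rather than $\max_k(v_{ik}+p_k)$; this is cosmetic. Second, your justification of~(i) is a little quick. Multiplicativity of $\nu$ together with $\max_\alpha A_\alpha+\max_\beta B_\beta=\max_{\alpha,\beta}(A_\alpha+B_\beta)$ identifies $\trop^{\max}(f)+\trop^{\max}(g)$ with the \emph{formal} tropical product (one linear term per pair of monomials), but equating this with $\trop^{\max}(fg)$ computed from the expanded polynomial also requires knowing that cancellation among the summands contributing to a single coefficient $c_a$ of $fg$ cannot change the resulting convex function. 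Your paragraph on cancellation only addresses the case where $c_a$ vanishes outright, not the subtler case where merely its leading-order part cancels. The clean fix is the one you already hint at via Kapranov: for a generic lift $P\in(\generalizedpuiseux)^d$ with $\nu(P_k)=p_k$ one has $\nu\bigl((fg)(P)\bigr)=\nu(f(P))+\nu(g(P))$, so the two piecewise-linear functions agree on a dense set of $p$ and hence everywhere by continuity. Alternatively, with the paper's specific $h_i$ every term contributing to a coefficient of $h$ is a single power of $z$ with coefficient~$1$, so no cancellation of any kind is possible. Either way, your combinatorial argument for~(ii) then finishes the proof.
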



The tropical hypersurface defined by $\trop^{\max}(h)$ is the orthogonal
projection of the co\-di\-men\-sion-$2$-skeleton of an unbounded ordinary
convex polyhedron $P_\A$ in $\R^d$ whose facets correspond to monomials of the
polynomial $h$; see \cite[Thm.~3.3]{RichtergebertSturmfelsTheobald05}.

\begin{prop}\label{prop:coarse:exponent}
    Let $p\in\T^{d-1}\setminus\A$ be a generic point.  Then its coarse type
    $\ct_\A(p)$ is the exponent of the monomial in $h$ which defines the
    unique facet of $P_\A$ above $p$.
\end{prop}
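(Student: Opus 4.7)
The plan is to evaluate $\trop^{\max}(h)(p)$ in two different ways and compare. Since $p$ is generic, the factorisation $h = h_1 \cdots h_n$ combined with the multiplicativity of max-tropicalisation (there is no cancellation among leading Puiseux terms) gives
\[
    \trop^{\max}(h)(p) \ = \ \sum_{i=1}^n \trop^{\max}(h_i)(p) \ = \ \sum_{i=1}^n \max_{k\in[d]}(p_k - v_{ik}),
\]
and by genericity each inner maximum is attained at a unique index $k_i = k_i(p)\in[d]$, characterised by $p\in S_{k_i}^{\max}(v_i)$. Setting $f\colon[n]\to[d]$, $i\mapsto k_i(p)$, this rewrites as
\[
    \trop^{\max}(h)(p) \ = \ \langle t^f, p\rangle - \sum_{i=1}^n v_{i,f(i)},
\]
where the $k$-th entry of the exponent $t^f = (|f^{-1}(1)|,\dots,|f^{-1}(d)|)$ is $|\{i : p\in S_k^{\max}(v_i)\}|$, namely precisely the coarse type $\ct_\A(p)_k$.

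Next, I would expand the product in $\generalizedpuiseux[x_1,\dots,x_d]$ to write $h = \sum_t c_t\, x^t$, where
\[
    c_t \ = \ \sum_{g \colon t^g = t} z^{-\sum_i v_{i,g(i)}}.
\]
By the construction cited from \cite[Thm.~3.3]{RichtergebertSturmfelsTheobald05}, the facets of $P_\A$ are labelled by the exponents of monomials appearing in $h$, and the facet labelled by $x^t$ is the region on which the piecewise linear upper envelope $\trop^{\max}(h)$ coincides with the affine function $\trop^{\max}(c_t) + \langle t, p\rangle$. Consequently the unique facet of $P_\A$ above a generic $p$ corresponds to the exponent $t$ for which this affine function realises the maximum at $p$.

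Combining the two calculations, the factorisation-side evaluation already exhibits $\trop^{\max}(h)(p)$ as a specific Puiseux summand inside $\trop^{\max}(c_{t^f}) + \langle t^f, p\rangle$; hence the coefficient $c_{t^f}$ attains the overall maximum at $p$, and by the uniqueness of the $k_i$ at a generic $p$ the maximising exponent is unique. Therefore $t^f = \ct_\A(p)$ labels the facet of $P_\A$ above $p$. The main technical subtlety I foresee is confirming that the distinguished Puiseux summand realising the maximum is not cancelled by other summands contributing to $c_{t^f}$; this is automatic in $\generalizedpuiseux$, since no cancellation can occur between Puiseux terms of distinct real valuations, and the summands realising the leading term of $c_{t^f}$ correspond (generically) to distinct maps $g$ with distinct exponents $-\sum_i v_{i,g(i)}$.
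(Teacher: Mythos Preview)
Your proposal is correct and follows essentially the same approach as the paper: both arguments identify, for each $i$, the unique sector index $\tau_i$ (your $k_i$) with $p\in S_{\tau_i}^{\max}(v_i)$, use the multiplicativity of $\trop^{\max}$ to write $\trop^{\max}(h)(p)=\sum_i(p_{\tau_i}-v_{i,\tau_i})$, and read off the maximising monomial $x_{\tau_1}\cdots x_{\tau_n}$, whose exponent is $\ct_\A(p)$. Your treatment is slightly more detailed in that you explicitly address the cancellation issue for the Puiseux coefficient $c_{t^f}$ (which the paper's choice $h_i=\sum_k z^{-v_{ik}}x_k$ makes a non-issue, since all summands have coefficient $+1$), but the core argument is the same.
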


\begin{proof}
    Since $p$ is generic, it is contained in a unique sector
    $S^{\max}_{\tau_i}(v_i)$ with respect to the hyperplane with apex $v_i$.
    Hence, $p$ satisfies the strict inequalities
    \begin{equation}\label{eq:type:ineq}
        v_{i,\tau_i}-p_{\tau_i} \ < \ v_{i,j}-p_j
    \end{equation}
    for all $i \in [n]$ and $j \in [d] \setminus \tau_i$. Equivalently,
    \[
        \sum_{i=1}^n p_{\tau_i} -v_{i,\tau_i}
    \]
    is the evaluation of the tropical polynomial $\trop^{\max}(h)$ and the
    corresponding monomial in $h$ at which the unique maximum is attained is
    $x_{\tau_1}x_{\tau_2}\cdots x_{\tau_n}$ with coefficient
    $z^{-v_{1,\tau_1}-v_{2,\tau_2}-\dots-v_{n,\tau_n}}$.
%
\end{proof}

\begin{example}[continued]\label{exmp:BY:poly}
    For the points in Example~\ref{exmp:BY:cells} we set
    \[
    \begin{array}{l@{\ \ = \ \ }r@{\ + \ }r@{\ + \ }r@{}l}
        h_1 & x_1       & z^{-3}x_2     & z^{-6}x_3&\,, \\
        h_2 & x_1       & z^{-5}x_2     & z^{-2}x_3&\,,  \\
        h_3 & x_1       & x_2           & z^{-1}x_3&\,, \ \text{and} \\
        h_4 & z^{-1}x_1 & z^{-5}x_2     & x_3&\,. \\
    \end{array}
    \]
    Then $h=h_1\cdot h_2\cdot h_3\cdot h_4$ equals
    \begin{quote}
      $
        h = z^{-1}{x_1}^4+(z^{-6}+z^{-5}+z^{-4}+z^{-1}){x_1}^3{x_2}+(z^{-7}+z^{-3}+z^{-2}+1){x_1}^3{x_3}+(z^{-10}+z^{-9}+z^{-8}+z^{-6}+z^{-5}+z^{-4}){x_1}^2{x_2}^2+(z^{-12}+z^{-11}+3z^{-7}+2z^{-6}+2z^{-5}+2z^{-3}+1){x_1}^2{x_2}{x_3}+(z^{-9}+z^{-8}+z^{-6}+z^{-4}+z^{-2}+z^{-1}){x_1}^2{x_3}^2+(z^{-13}+z^{-10}+z^{-9}+z^{-8}){x_1}{x_2}^3+(z^{-16}+z^{-12}+2z^{-11}+2z^{-10}+z^{-9}+z^{-8}+z^{-7}+z^{-6}+z^{-5}+z^{-3}){x_1}{x_2}^2{x_3}+(2z^{-13}+z^{-12}+z^{-11}+z^{-9}+z^{-8}+z^{-7}+2z^{-6}+z^{-5}+z^{-4}+z^{-2}){x_1}{x_2}{x_3}^2+(z^{-10}+z^{-8}+z^{-7}+z^{-3}){x_1}{x_3}^3+z^{-13}{x_2}^4+(z^{-16}+z^{-14}+z^{-10}+z^{-8}){x_2}^3{x_3}+(z^{-17}+z^{-13}+2z^{-11}+z^{-9}+z^{-5}){x_2}^2{x_3}^2+(z^{-14}+z^{-12}+z^{-8}+z^{-6}){x_2}{x_3}^3+z^{-9}{x_3}^4
  \, .
  $
  \end{quote}
  Its $\max$-tropicalization is
  \begin{quote}
  $
  \trop^{\max}(h) = (-1\odot{x_1}^{\odot 4})\oplus(-1\odot{x_1}^{\odot 3}\odot{x_2})\oplus({x_1}^{\odot
    3}\odot{x_3})\oplus(-4\odot{x_1}^{\odot 2}\odot{x_2}^{\odot 2})\oplus({x_1}^{\odot
    2}\odot{x_2}\odot{x_3})\oplus(-1\odot{x_1}^{\odot 2}{x_3}^{\odot
    2})\oplus(-8\odot{x_1}\odot{x_2}^{\odot 3})\oplus(-3\odot{x_1}\odot{x_2}^{\odot
    2}\odot{x_3})\oplus(-2\odot{x_1}\odot{x_2}\odot{x_3}^{\odot
    2})\oplus(-3\odot{x_1}\odot{x_3}^{\odot 3})\oplus(-13\odot{x_2}^{\odot
    4})\oplus(-8\odot{x_2}^{\odot 3}\odot{x_3})\oplus(-5\odot{x_2}^{\odot
    2}\odot{x_3}^{\odot 2})\oplus(-6\odot{x_2}\odot{x_3}^{\odot
    3})\oplus(-9\odot{x_3}^{\odot 4})
  \, .
  $
  \end{quote}
  The polynomial $h$ and its $\max$-tropicalization have $15$ terms each, one
  for each maximal cell of the arrangement shown in
  Figure~\ref{fig:BY:arrangement}.  For instance, the point $p=(0,1,0)$ is
  generic with fine type $T(p)=(12,3,4)$.  Evaluating $\trop^{\max}(h)$ at $p$
  gives $-1$, and this maximum is attained for the unique tropical monomial
  ${x_1}^{\odot 2}\odot{x_2}\odot{x_3}=2x_1+x_2+x_3$.  We have
  $\ct(p)=(2,1,1)$ for the coarse type.
\end{example}

Recall that a \emph{composition} of $n$ into $d$ parts is a sequence
$(t_1,t_2,\dots,t_d)\in\N^d$ of nonnegative integers such that
$t_1+t_2+\dots+t_d = n$. Such compositions bijectively correspond to monomials
of total degree $n$ in $d$ variables, of which there are exactly
$\tbinom{n+d-1}{n}=\tbinom{n+d-1}{d-1}$.

\begin{thm}\label{thm:t_injective}
  For an arrangement $\A = \A(V)$, the map $\ct$ from the set of cells in $\CD_\A$ of
  maximal dimension $d-1$ to the set of compositions of $n$ into $d$ parts is injective.
  Moreover, if the points $V$ are sufficiently generic then the map $\ct$ is bijective.
\end{thm}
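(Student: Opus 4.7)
The plan is to leverage Proposition~\ref{prop:coarse:exponent}, which identifies the coarse type of a generic point $p \in \T^{d-1}\setminus\A$ with the exponent of the monomial of $h$ labeling the unique facet of the polyhedron $P_\A$ lying above $p$. Injectivity will follow immediately from this identification, while bijectivity under sufficient genericity will follow from a cardinality match.

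For injectivity, let $C$ and $C'$ be two distinct maximal cells of $\CD_\A$. Since the arrangement $\A$ has codimension one in $\T^{d-1}$, the relative interiors of $C$ and $C'$ contain points $p$ and $p'$ respectively that avoid $\A$ and are therefore generic in the sense of Proposition~\ref{prop:coarse:exponent}. The maximal cells of $\CD_\A$ are in bijection with the facets of $P_\A$ via orthogonal projection to $\R^d$, so $p$ and $p'$ lie beneath distinct facets $F$ and $F'$ of $P_\A$. By construction each facet is labeled by the unique monomial of $h$ whose tropical term attains the maximum of $\trop^{\max}(h)$ on the projection of that facet, so $F$ and $F'$ carry distinct monomial labels. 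Applying Proposition~\ref{prop:coarse:exponent} then shows that $\ct(C) = \ct(p)$ and $\ct(C') = \ct(p')$ are the exponents of two different degree-$n$ monomials in $d$ variables, and hence distinct compositions of $n$ into $d$ parts.

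For bijectivity when $V$ is sufficiently generic, I would argue by counting. Under this hypothesis every degree-$n$ monomial in $d$ variables should contribute a facet to $P_\A$: the coefficient of such a monomial in $h$ is a sum of Puiseux terms with generic (distinct) valuations, giving a well-defined dominant term, and no monomial is pointwise dominated on all of $\T^{d-1}$. It follows that the number of facets of $P_\A$, and thus the number of maximal cells of $\CD_\A$, equals the number $\binom{n+d-1}{d-1}$ of compositions of $n$ into $d$ parts. Combined with injectivity this forces bijectivity; the running example bears out the count, as Example~\ref{exmp:BY:cells} has precisely $15 = \binom{6}{2}$ maximal cells.

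The hardest part will be rigorously establishing that every degree-$n$ monomial in $d$ variables actually arises as a facet label of $P_\A$ under sufficient genericity. The cleanest route appears to be through the dual regular mixed subdivision of $n\Delta_{d-1}$: sufficient genericity corresponds to this subdivision being fine, and in that setting the maximal cells of $\CD_\A$ are dually indexed by the maximal cells of the subdivision, whose representative lattice points are precisely the compositions $(t_1,\dots,t_d)$ of $n$. This perspective will be handled naturally via the Cayley trick in Section~\ref{sec:Mixed}, which suggests deferring the more subtle details until that machinery is in place.
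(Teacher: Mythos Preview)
Your proposal is correct and follows essentially the same route as the paper: both derive injectivity immediately from Proposition~\ref{prop:coarse:exponent}, and both argue bijectivity in the generic case by showing that every degree-$n$ monomial labels a facet of $P_\A$. The only difference is one of confidence: the paper dispatches the bijectivity step directly in two lines---all monomials occur in $h$ because each $h_i$ has all its linear terms, and sufficient genericity of $V$ forces the tropical coefficients of $\trop^{\max}(h)$ to be generic, so no monomial is redundant---whereas you flag this as the hardest part and propose deferring it to the mixed-subdivision machinery of Section~\ref{sec:Mixed}. That detour is unnecessary; your own sketch (``sum of Puiseux terms with generic valuations, giving a well-defined dominant term, and no monomial is pointwise dominated'') is already the complete argument.
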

\begin{proof}
  The injectivity of $\ct$ follows immediately from Proposition~\ref{prop:coarse:exponent}.

  Now suppose that the points in $V$ are sufficiently generic.  All
  coordinates are finite, and hence all linear monomials are present in the
  linear form $h_i$ with non-zero coefficients.  Since $h$ is the product of
  $h_1,h_2,\dots,h_n$ all possible monomials of degree $n$ in the $d$
  indeterminates $x_1,x_2,\dots,x_d$ actually occur in $h$.  Therefore all
  tropical monomials occur in the tropicalization $\trop^{\max}(h)$.  As we
  assumed that the coefficients in $V$ are chosen sufficiently generic, the
  coefficients in the tropical polynomial $\trop^{\max}(h)$ are generic.
  Equivalently, each monomial of $h$ defines a facet of $P_\A$, and hence the
  claim.
\end{proof}

Proposition~\ref{prop:latticepoints} provides an interpretation of the above result in
terms of mixed subdivisions of dilated simplices.

\begin{cor}
  For an arrangement $\A = \A(V)$, the number of cells in $\CD_\A$ of maximal dimension
  $d-1$ does not exceed $\tbinom{n+d-1}{n}$. If the points $V$ are sufficiently generic
  then this bound is attained.
\end{cor}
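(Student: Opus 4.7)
The plan is to deduce this corollary directly from Theorem~\ref{thm:t_injective} together with the elementary count of compositions of $n$ into $d$ parts. Concretely, I would argue as follows.

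First, I would recall the bijection between compositions $(t_1,\dots,t_d)\in\N^d$ with $t_1+\cdots+t_d=n$ and degree-$n$ monomials in $d$ variables, and note the classical stars-and-bars count $\tbinom{n+d-1}{n}=\tbinom{n+d-1}{d-1}$ for this set. This is the ambient target of the coarse-type map restricted to maximal cells: indeed by definition each entry of $\ct(p)=(t_1,\dots,t_d)$ satisfies $t_k=\sum_i T_\A(p)_{ik}$, and since every point $p$ lies in at least one closed sector of each of the $n$ hyperplanes (in fact exactly one, for a point in the interior of a maximal cell), the entries sum to $n$. Thus $\ct$ does land in the set of compositions of $n$ into $d$ parts.

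Next, applying the injectivity part of Theorem~\ref{thm:t_injective}, the number of maximal cells in $\CD_\A$ is at most the cardinality of this set of compositions, namely $\tbinom{n+d-1}{n}$, establishing the upper bound. For the equality statement, I would simply invoke the second part of Theorem~\ref{thm:t_injective}: when $V$ is sufficiently generic, $\ct$ is bijective onto the set of compositions, so the upper bound is attained.

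There is essentially no obstacle — the work has already been done in Theorem~\ref{thm:t_injective}, and the corollary is a formal counting consequence. The only thing worth flagging is the small sanity check that $\ct$ restricted to maximal cells actually takes values in compositions of $n$ (not some larger set), which follows because on the relative interior of a maximal cell the point is generic and so lies in exactly one closed sector of each hyperplane, forcing the row sums of $T_\A(p)$ to be $1$ and hence the column sums to total $n$.
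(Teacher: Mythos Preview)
Your proposal is correct and matches the paper's approach: the corollary is stated without a separate proof in the paper, being an immediate consequence of Theorem~\ref{thm:t_injective} together with the stars-and-bars count of compositions recalled just before that theorem. Your added sanity check that $\ct$ on maximal cells lands in compositions of $n$ is a reasonable clarification of something the paper leaves implicit in the formulation of Theorem~\ref{thm:t_injective}.
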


Note that the injectivity of $\ct$ does not extend to the lower-dimensional cells, even in the sufficiently generic case.  For instance, in the arrangement considered in
Examples \ref{exmp:BY:cells} and~\ref{exmp:BY:poly} the points $(0,2,3)$ and $(0,3,5)$ lie
in the relative interiors of two distinct $1$-cells; yet they share the same coarse type
$(1,1,3)$.  However coarse types are \emph{locally} distinct in the following sense.

\begin{prop}\label{prop:minimal}
  Let $C$ and $D$ be two distinct cells in $\CD_{\A}$. If $C$ is contained in the closure
  of $D$ then $C$ and $D$ have distinct coarse types.
\end{prop}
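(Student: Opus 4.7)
The plan is to reduce the claim for coarse types to a fact about fine types that has already been recorded in Section~2.1, namely the anti-isomorphism
\[
C \ \subseteq \ D \ \iff \ T_{\A}(D) \ \le \ T_{\A}(C),
\]
where the order on matrices in $\ZO^{n\times d}$ is componentwise. Since this is a poset anti-isomorphism, in particular the map $T_{\A}$ is injective on cells: distinct cells have distinct fine types.

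With that in hand, the argument is a direct unfolding of definitions. First I would apply the anti-isomorphism to the hypothesis $C \subseteq \overline{D} = D$ (the cells of $\CD_{\A}$ being closed) to get $T_{\A}(D) \le T_{\A}(C)$ componentwise. Injectivity combined with $C \ne D$ then forces the inequality to be strict in at least one entry, i.e.\ there exist $i \in [n]$ and $k \in [d]$ with $T_{\A}(D)_{ik} = 0$ and $T_{\A}(C)_{ik} = 1$.

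Finally I would pass to coarse types by summing the $k$-th column of each matrix. Using $T_{\A}(D)_{jk} \le T_{\A}(C)_{jk}$ for every $j$ together with the strict inequality at $j=i$, summation yields
\[
\ct_{\A}(D)_k \ = \ \sum_{j=1}^n T_{\A}(D)_{jk} \ < \ \sum_{j=1}^n T_{\A}(C)_{jk} \ = \ \ct_{\A}(C)_k,
\]
so the two coarse types disagree already in the $k$-th coordinate. There is no real obstacle here; the only point requiring a moment of care is remembering that the anti-isomorphism gives both the monotonicity of $T_{\A}$ and its injectivity on cells, and it is the combination of the two that ensures strictness of the componentwise comparison whenever $C$ and $D$ are distinct comparable cells.
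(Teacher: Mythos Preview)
Your proof is correct and follows essentially the same approach as the paper's own proof: both invoke the componentwise inequality $T_{\A}(D) \le T_{\A}(C)$ with $T_{\A}(D) \ne T_{\A}(C)$ (the paper cites \cite[Cor.~13]{DevStu04}, you cite the anti-isomorphism recorded in Section~2.1), and then observe that this forces a difference in the coarse types. Your version is simply more explicit in isolating a column $k$ where strictness occurs and summing that column, whereas the paper compresses this into the phrase ``$C$ is contained in more closed sectors.''
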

\begin{proof}
    By \cite[Cor.~13]{DevStu04} we have $\ft_{\A}(D) \le \ft_{\A}(C)$ and
    $\ft_{\A}(D) \not= \ft_{\A}(C)$. This, in particular implies that $C$ is
    contained in more closed sectors with respect to $\A$ and hence $C$ and
    $D$ cannot have the same coarse type.
\end{proof}

Note also that the boundedness of a cell in $\CD_{\A}$ can be read off its
coarse type.

\begin{prop}[{\cite[Cor.~12]{DevStu04}}]\label{prop:bounded_coarse}
    Let $\A = \A(V)$ be a tropical hyperplane arrangement and let $C \in
    \CD_{\A}$ be a cell in the induced decomposition. Then $C$ is bounded if
    and only if $\ct(C)_i > 0$ for all $i = 1,\dots,n$.
\end{prop}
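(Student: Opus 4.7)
The proof reduces, almost tautologically, to translating the coarse type condition back into a fine type condition and then invoking the cited result from \cite{DevStu04}. By definition, $\ct(C)_k = \sum_{i=1}^n T_\A(C)_{ik}$ counts the number of hyperplanes of $\A$ whose $k$-th closed sector contains $C$. In particular, $\ct(C)_k > 0$ holds if and only if the $k$-th column of the fine type matrix $T_\A(C)$ has at least one $1$. Thus the claim ``$C$ is bounded $\iff \ct(C)_k > 0$ for all $k \in [d]$'' is literally the assertion ``$C$ is bounded $\iff$ every column of $T_\A(C)$ contains a $1$,'' which is Corollary~12 of \cite{DevStu04}. (I note that the indexing ``$i = 1,\dots,n$'' in the statement should read ``$k = 1,\dots,d$,'' since $\ct(C) \in \N^d$.)

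If one prefers a self-contained argument, the geometry behind \cite[Cor.~12]{DevStu04} is straightforward. For the ``only if'' direction, suppose $\ct(C)_k = 0$ for some fixed $k$, so the $k$-th column of $T_\A(C)$ vanishes. Pick any $p \in C$ and consider the ray $p + \R_{\ge 0}(-e_k) \pmod{\R\vones}$. For each pair $(i,\ell)$ with $T_\A(C)_{i\ell} = 1$ one must have $\ell \neq k$, and a direct check on the defining inequality $v_{i,\ell} - p_\ell \le v_{i,j} - p_j$ of $S_\ell^{\max}(v_i)$ shows that decreasing $p_k$ only relaxes the inequality (when $j = k$) or leaves it unchanged (when $j \neq k$). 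Hence the whole ray lies in $C$ and $C$ is unbounded.

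For the ``if'' direction, assume $\ct(C)_k > 0$ for every $k$, so for each $k$ there is an index $i_k$ with $C \subseteq S_k^{\max}(v_{i_k})$. The sector inclusion yields the inequality $p_k - v_{i_k, k} \ge p_j - v_{i_k, j}$ for every $j$ and every $p \in C$. Normalizing in the quotient $\T^{d-1} = \R^d / \R\vones$, these $d$ inequalities bound each coordinate from both sides in terms of the others, and standard manipulation shows $C$ is contained in a bounded region.

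The only real subtlety is to keep the bookkeeping consistent modulo $\R\vones$, so that the ray direction $-e_k$ and the resulting coordinate inequalities are interpreted correctly in the tropical torus; this is the mild obstacle, but it is dispensed with routinely. Given how short the reduction is, the most efficient presentation is to simply note the equivalence of coarse and fine type formulations of the column condition and cite \cite[Cor.~12]{DevStu04}.
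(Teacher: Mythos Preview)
Your proposal is correct. The reduction of the coarse condition to the fine-type column condition and the appeal to \cite[Cor.~12]{DevStu04} is exactly what the citation in the proposition header advertises, so in spirit this matches the paper.

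Where your self-contained argument differs from the paper's own proof is in the geometric mechanism. The paper does not build the ray $p - t e_k$ directly; instead it invokes the $\min$/$\max$ sector duality already recorded earlier in the section, namely $p \in S_k^{\max}(q) \Leftrightarrow q \in S_k^{\min}(p)$, and then characterizes unboundedness by the existence of a $k$ with $S_k^{\min}(p) \cap V = \emptyset$ (equivalently, $p \notin \tconv^{\min}(V)$). Translating that emptiness back through the duality gives $\ct(p)_k = 0$. Your ray argument is the explicit, coordinate-level version of the same fact and has the advantage of being self-contained; the paper's version is shorter because it recycles the duality observation and the description of the bounded complex as the $\min$-tropical hull. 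Either way the content is the same one-line geometric fact, and your remark about the index range ($d$ rather than $n$) is well taken.
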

\begin{proof}
    We noted previously that for two points $p,q \in \T^{d-1}$ we have $p \in
    S_k^{\max}(q)$ if and only if $q \in S_k^{\min}(p)$. Now, the point $p$ is
    contained in an unbounded cell of $\CD_{\A}$ if there is a $k$ such that
    $S_k^{\min}(p) \cap V = \emptyset$. This is the case if and only if
    $t_\A(p)_k = 0$, that is, there is no hyperplane $H_i$ for which $p$ is
    contained in the $k$-th sector.
\end{proof}

\begin{rem}
    There is another perspective on Proposition~\ref{prop:coarse:exponent}
    that we wish to share. The standard valuation $\nu :
    (\generalizedpuiseux)^* \rightarrow \R$ on the field of (generalized)
    Puiseux series maps an element to its order.
    For $\omega \in \R^d$, the
    \emph{$\omega$-weight} of $f = \sum_a c_a \x^a \in
    \generalizedpuiseux[x_1,\dots,x_d]$ is $\omega(f) = \max\{ \nu(c_a) +
    \langle \omega, a \rangle : c_a~\not=~0\}$ and the \emph{initial form} of
    the polynomial $f$ is given by
    $\iform_\omega(f) := \sum \{ \x^a : \nu(c_a) + \langle \omega, a \rangle =
    \omega(f) \}$. Finally, for an ideal $I \subset
    \generalizedpuiseux[x_1,\dots,x_d]$ the \emph{initial ideal}
    $\iform_\omega(I)$ is generated by all $\iform_\omega(f)$ for $f \in I$.
    An alternative characterization of the tropical variety was given by
    Speyer and Sturmfels~\cite[Thm.~2.1]{spey04} as the collection of weights
    $\omega \in \R^d$ such that $\iform_\omega(I)$ does not
    contain a monomial. For a tropical hyperplane arrangement $\A = \A(V)$
    with defining polynomial $h = h_\A \in \generalizedpuiseux[x_1,\dots,x_d]$
    the coarse type $\ct(p)$ of a generic point $p\in\T^{d-1}\setminus\A$
    satisfies
    \[
        \ideal{\x^{\ct(p)}} \ = \ \iform_p(\ideal{h}) \, .
    \]
\end{rem}

\begin{rem}
  The evaluation of the tropical polynomial $\trop^{\max}(h)$ at some point $p$ can be
  modeled as a min-cost-flow problem as follows: Define a directed graph with nodes
  $\alpha$, $\beta_1,\beta_2,\dots,\beta_n$, $\gamma_1,\gamma_2,\dots,\gamma_d$, and
  $\delta$.  For each $i\in[n]$ and each $k\in[d]$ there are the following arcs: from
  $\alpha$ to $\beta_i$ with cost $0$ and capacity $1$, from $\beta_i$ to $\gamma_k$ with
  cost $v_{ik}$ and capacity $1$, from $\gamma_k$ to $\delta$ with cost $p_k$ and
  unbounded capacity.  Now $\trop^{\max}(h)(p)$ equals the minimal cost of shipping $n$
  units of flow from the unique source $\alpha$ to the unique sink~$\delta$.  This problem
  has a strongly polynomial time solution in the parameters $n$ and $d$; see
  \cite[Chapter~12]{Schrijver03:CO_A}.  This is remarkable in view of the fact that the
  tropical polynomial $\trop^{\max}(h)$ may have exponentially many monomials.
\end{rem}

\subsection{Topology of types}\label{subsec:top_coarsetypes}

We next investigate topological properties of certain subsets of $\T^{d-1}$
induced by a tropical hyperplane arrangement.  The motivation for such a study
comes from our applications to (co)cellular resolutions as described in
Section~\ref{sec:resolutions}.  However, since the methods used to establish
the desired properties are based on notions from (coarse) tropical convexity,
we include them here.  In particular, we show that certain subsets of
$\T^{d-1}$ are contractible by proving that they are, in fact, tropically
convex. Let us emphasize that none of the results that follow require the
hyperplanes to be in general position.

We begin with the following observation, which was established in
\cite[Thm.~2]{DevStu04}.  We include a short proof for the sake of
completeness.

\begin{prop}\label{prop:convexcont}
  A tropically convex set is contractible.
\end{prop}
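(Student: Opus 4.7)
The plan is to show that any nonempty tropically convex set $S \subseteq \T^{d-1}$ admits a continuous deformation retraction to any chosen point $p \in S$, built out of tropical line segments.

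First I would fix representatives for elements of $\T^{d-1}$ via a continuous section of the quotient map $\R^d \to \T^{d-1}$ (e.g.\ normalize the last coordinate to zero), so that tropical operations can be written componentwise. For $x \in S$ and $\lambda \in \R$, the combination
\[
\phi_x(\lambda) \ := \ x \oplus (\lambda \odot p) \ = \ \bigl(\min(x_i,\, \lambda + p_i)\bigr)_{i \in [d]}
\]
lies in $S$ by tropical convexity, and depends continuously on both $x$ and $\lambda$.

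The key observation is that $\phi_x$ actually \emph{stabilizes} at the endpoints of the tropical segment: one has $\phi_x(\lambda) = x$ whenever $\lambda \ge M(x) := \max_i(x_i - p_i)$, and $\phi_x(\lambda)$ represents $p$ in $\T^{d-1}$ whenever $\lambda \le m(x) := \min_i(x_i - p_i)$. Since $M$ and $m$ are continuous in $x$, I would then define
\[
H(x, t) \ := \ \phi_x\bigl((1-t)(M(x)+1) + t(m(x)-1)\bigr),
\]
which gives $H(x,0)=x$, $H(x,1)=p$, image contained in $S$, and joint continuity in $(x,t)$. This is the required contraction.

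The main (mild) obstacle is the bookkeeping in the quotient $\R^d/\R\vones$: the scalars $M(x)$ and $m(x)$ are not invariant under change of representative, but shift by the same constant, so the resulting class $H(x,t) \in \T^{d-1}$ is well defined. Once this is checked, the homotopy descends to a continuous map on the quotient and contractibility follows. (The empty case is vacuous, or excluded by convention.)
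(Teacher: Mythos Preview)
Your proof is correct and follows essentially the same approach as the paper: both construct a contraction of $S$ onto a basepoint $p$ by sliding each point along its tropical line segment to $p$, using continuity of the min operation and of the relevant ``endpoint'' scalars. The paper packages your $M(x)-m(x)$ into a single translation-invariant quantity $\dist(p,x)$ and uses the symmetric parametrization $\bigl((1-t)\dist(p,x)\odot p\bigr)\oplus\bigl(t\,\dist(p,x)\odot x\bigr)$, but this is only a cosmetic difference in bookkeeping.
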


\begin{proof}
  The \emph{distance} of two points $p,q\in\T^{d-1}$ is defined as
  \[
  \dist(p,q) \ := \ \max_{1\le i<j \le d} | p_i-p_j+q_j-q_i | \, .
  \]
  We have the equations
  \[
  \big(\dist(p,q) \odot p \big) \oplus q \ = \ q \quad \text{and} \quad p \oplus \big(\dist(p,q)\odot
  q \big) \ = \ p \, .
  \]
  Now let $p$ be a point in some tropically convex set $S$.  The map
  \[
  \eta \,:\, S \times[0,1] \to S \,:\, (q,t) \mapsto \big(((1-t)\cdot\dist(p,q) ) \odot p \big)
  \oplus \big((t\cdot\dist(p,q))\odot q \big)
  \]
  is continuous, and it contracts the set $S$ to the point $p$.
\end{proof}

Recall that for two types $T, T^\prime \in \ZO^{n \times d}$ we write $T \le
T^\prime$ for the componentwise induced partial order.  We let $\min(T,T^\prime)$ and
$\max(T,T^\prime)$ denote the tables with entries given by the componentwise
minimum and maximum, respectively.

\begin{prop}\label{prop:typeineq}
    Let $\A = \A(V)$ be a $\max$-tropical hyperplane arrangement in $\T^{d-1}$
    and $p,q \in \T^{d-1}$. Then
    \[
        \min( \ft_\A(p), \ft_\A(q) ) \ \le\  \ft_\A(r) \ \le \
        \max( \ft_\A(p), \ft_\A(q) )
    \]
    for every point $r \in \tconv^{\max}\{p,q\}$
    on the max-tropical line segment between $p$ and $q$.
\end{prop}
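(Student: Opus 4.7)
The plan is to prove both inequalities entry-by-entry. Fix $(i,k) \in [n] \times [d]$. First I would unpack the definition: $\ft_\A(p)_{ik}=1$ iff $p \in S_k^{\max}(v_i)$, equivalently $p_k - v_{ik} \ge p_j - v_{ij}$ for every $j \in [d]$. Since $\T^{d-1}$ is the quotient by $\R\vones$, I can parametrize an arbitrary point of the max-tropical segment $\tconv^{\max}\{p,q\}$ in the normalized form $r = p \oplus^{\max}(\lambda \odot q)$ for some $\lambda \in \R$, so that coordinate-wise $r_k - v_{ik} = \max(p_k - v_{ik},\, \lambda + q_k - v_{ik})$.

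For the lower inequality, if $p,q \in S_k^{\max}(v_i)$, then the defining inequalities for $p$, together with the $\lambda$-shifted inequalities for $q$, combine under componentwise maxima on both sides to yield $r_k - v_{ik} \ge r_j - v_{ij}$ for every $j$, placing $r$ in $S_k^{\max}(v_i)$. For the upper inequality, I would assume $r \in S_k^{\max}(v_i)$ and case-split according to which of $p_k$ or $\lambda + q_k$ achieves the maximum defining $r_k$. In the first case the inequalities $r_k - v_{ik} \ge r_j - v_{ij}$ immediately restrict to $p_k - v_{ik} \ge p_j - v_{ij}$ for every $j$, so $p \in S_k^{\max}(v_i)$; the symmetric case places $q$ in the sector. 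Either way $\max(\ft_\A(p), \ft_\A(q))_{ik}=1$, as required.

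The whole argument rests on the observation that the system of affine inequalities cutting out a closed sector is preserved under componentwise $\max$, so everything reduces to routine case analysis. The only subtle point I anticipate is the preliminary normalization modulo $\R\vones$, which is what allows the two-parameter family $(\lambda \odot p) \oplus^{\max}(\mu \odot q)$ to be collapsed to the one-parameter family used above; beyond that bookkeeping step, I do not foresee a genuine obstacle.
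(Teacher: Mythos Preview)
Your proposal is correct and follows essentially the same entry-by-entry approach as the paper: both arguments verify the sector membership directly from the defining inequalities, case-splitting on which of the two summands realizes the coordinatewise maximum. Your treatment of the lower bound is marginally slicker (taking the max of both sides in one stroke rather than splitting on $j$), and your one-parameter normalization is just the paper's ``without loss of generality'' reduction made explicit; otherwise the proofs coincide.
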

\begin{proof}
  Let $r = (\lambda \odot p) \oplus (\mu \odot q) = \max\{ \lambda \vones + p, \mu \vones
  + q \}$ for $\lambda,\mu \in \R$ and let $k \in [d]$ be arbitrary but fixed.  We treat
  each inequality separately but in each case we assume without loss of generality that
  $r_k = \lambda + p_k \ge \mu + q_k$.

    For the first inequality suppose that both $p$ and $q$ are in the $k$-th sector of some hyperplane $H(v)$, so that $p_k - p_i \geq v_k - v_i$ and $q_k - q_i \geq v_k - v_i$ for all $i \in [d]$.  Now, for $j \in [d]$ we distinguish two cases.  If $r_j = \lambda + p_j \geq \mu + q_j$, then $r_j - r_k = p_j - p_k$, so that $r$ is in the $k$-th sector of $H(v)$.  If $r_j = \mu + q_j \geq \lambda + p_j$, then $r_k - r_j \geq \mu + q_k - r_j = \mu + q_k - (\mu + q_j) = q_k - q_j$.  Hence $r_k - r_j \geq v_k - v_j$ for all $j \in [d]$, and we conclude that $r$ is in the $k$-th sector of $H(v)$.

    For the second inequality suppose that $r$ is contained in the $k$-th sector of some hyperplane $H(v)$, so that $r_k - r_j \geq v_k - v_j$ for all $j \in [d]$.  Since $r_k = \lambda + p_k \geq \mu + q_k$ we have that $\lambda + p_k \geq v_k - v_j + r_j$ for all $j \in [d]$.   Also, $r_j \geq \lambda + p_j$ and hence $\lambda + p_k \geq v_k - v_j + \lambda + p_j$.  We conclude $p_k - p_j \geq v_k - v_j$ for all $j \in [d]$.  Hence $p$ is in the $k$-th sector of $H(v)$, as desired.
\end{proof}

From the definition the coarse type we obtain the following statement
regarding coarse types.

\begin{cor}\label{cor:coarse_segment_upper}
    Let $\A = \A(V)$ be a $\max$-tropical hyperplane arrangement in $\T^{d-1}$
    and $p,q \in \T^{d-1}$. Then
    \[
    \ct_\A(r) \ \le \ \max( \ct_\A(p), \ct_\A(q) )
    \]
    for  $r \in \tconv^{\max}\{p,q\}$.
\end{cor}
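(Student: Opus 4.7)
The plan is to reduce this to Proposition~\ref{prop:typeineq}, but to extract a stronger column-wise statement from its proof rather than applying the proposition's conclusion componentwise. The reason is that summing the naive bound $T_\A(r)_{ik} \le \max(T_\A(p)_{ik}, T_\A(q)_{ik})$ over $i$ does not yield the required inequality: in general, $\sum_i \max(a_i, b_i)$ can strictly exceed $\max(\sum_i a_i, \sum_i b_i)$. This is the main obstacle, and avoiding it requires being careful about \emph{which} of $p,q$ dominates $r$ in a given column.

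First I would fix $k \in [d]$ and write $r = (\lambda \odot p) \oplus (\mu \odot q)$, so that $r_k = \max(\lambda + p_k,\ \mu + q_k)$. Without loss of generality, $r_k = \lambda + p_k \ge \mu + q_k$; the other case is symmetric.

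Next I would revisit the second half of the proof of Proposition~\ref{prop:typeineq}. Under the WLOG assumption just made, that argument shows that whenever $r$ lies in the $k$-th closed sector of a hyperplane $H(v_i)$, the point $p$ also lies in that sector. This gives the column-wise, uniform-in-$i$ bound $T_\A(r)_{ik} \le T_\A(p)_{ik}$ for every $i \in [n]$. Summing over $i$ yields $\ct_\A(r)_k \le \ct_\A(p)_k \le \max(\ct_\A(p)_k,\ \ct_\A(q)_k)$, and repeating the argument for each $k \in [d]$ (with the dominating endpoint possibly depending on $k$) gives the componentwise inequality $\ct_\A(r) \le \max(\ct_\A(p), \ct_\A(q))$ as required.
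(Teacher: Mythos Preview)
Your proof is correct, and it is essentially the approach the paper intends: the paper's own justification is just the one-line ``From the definition [of] the coarse type we obtain the following statement,'' leaving the reader to extract from the proof of Proposition~\ref{prop:typeineq} exactly the column-wise observation you make explicit. You are right that the statement of Proposition~\ref{prop:typeineq} alone does not suffice, since summing $\max(T_\A(p)_{ik},T_\A(q)_{ik})$ over $i$ can overshoot $\max(\ct_\A(p)_k,\ct_\A(q)_k)$; the crucial point, which the paper leaves implicit, is that the second half of that proof shows that for each fixed $k$ a \emph{single} endpoint dominates the entire $k$-th column.
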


From Proposition~\ref{prop:typeineq} and Corollary~\ref{cor:coarse_segment_upper} we
obtain the following result regarding the topology of regions of bounded fine and coarse
(co)type.  These will be the main tools for establishing results regarding (co)cellular
resolutions in Section \ref{sec:resolutions}.

\begin{cor}\label{cor:downsets}
  Let $\A = \A(V)$ be an arrangement of $n$ $\max$-tropical hyperplanes in $\T^{d-1}$, and
  let $B \in \ZO^{n \times d}$ and $\b \in \N^d$.  With labels determined by fine
  (respectively, coarse) type the following subsets of $\T^{d-1}$ are $\max$-tropically
  convex and hence contractible:
  \[
  \begin{array}{lllll}
    (\CD_\A,\ft)_{\le B} &:=& \SetOf{ p \in \T^{d-1} }{ \ft_\A(p) \le B }
    &=& \bigcup\SetOf{ C \in \CD_\A }{ \ft_\A(C) \le B }\\
    (\CD_\A,\ct)_{\le \b} &:=& \SetOf{ p \in \T^{d-1} }{ \ct_\A(p) \le \b }
    &=& \bigcup\SetOf{ C \in \CD_\A }{ \ct_\A(C) \le \b }\,.\\
  \end{array}
  \]
  Similarly, with labels determined by fine (respectively, coarse) \emph{co}type the
  following subsets of $\T^{d-1}$ are $\min$-tropically convex and hence contractible:
  \[
  \begin{array}{lllll}
    (\CD_\A,\overline{\ft})_{\le B} &:=& \SetOf{ p \in \T^{d-1} }{\overline{\ft}_\A(p) \le B }
    &=& \bigcup\SetOf{ C \in \CD_\A }{\overline{\ft}_\A(C) \le B }\\
    (\CD_\A,\overline{\ct})_{\le \b} &:=& \SetOf{ p \in \T^{d-1} }{\overline{\ct}_\A(p) \le \b }
    &=& \bigcup\SetOf{ C \in \CD_\A }{\overline{\ct}_\A(C) \le \b }\,.\\
  \end{array}
  \]

  As a consequence, the two subsets of $\T^{d-1}$ obtained by replacing the complex
  $\CD_\A$ in the above pair of formulas with the bounded complex $\boundedCD_\A$ are
  $\min$-tropically convex and hence contractible.
\end{cor}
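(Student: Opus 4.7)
The plan is to prove tropical convexity of each of the four sublevel sets separately, and then deduce the bounded case as a postscript. In every case contractibility will follow at once from Proposition~\ref{prop:convexcont}.

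For $(\CD_\A,\ft)_{\le B}$ and $(\CD_\A,\ct)_{\le \b}$ the arguments are immediate: given $p,q$ in the set and an arbitrary point $r$ on the $\max$-tropical segment between them, Proposition~\ref{prop:typeineq} yields $\ft_\A(r) \le \max(\ft_\A(p),\ft_\A(q)) \le B$, and Corollary~\ref{cor:coarse_segment_upper} yields $\ct_\A(r) \le \max(\ct_\A(p),\ct_\A(q)) \le \b$. For $(\CD_\A,\overline{\ft})_{\le B}$ I would rewrite the defining condition: $\overline{\ft}_\A(p) \le B$ is the same as $p \in S_k^{\max}(v_i)$ for every pair $(i,k)$ with $B_{ik}=0$. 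Hence the set is a finite intersection of closed sectors, each of which is $\min$-tropically convex (as recorded in Section~\ref{subsec:coarsetypes}), and intersections of $\min$-tropically convex sets are $\min$-tropically convex.

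The main obstacle will be $(\CD_\A,\overline{\ct})_{\le \b}$, because none of the stated results directly bounds coarse (co)type along a $\min$-tropical segment. The key lemma to prove is the $\min$-analog of Corollary~\ref{cor:coarse_segment_upper}: for any $r = (\lambda\odot p)\oplus(\mu\odot q)$ computed with $\min$-tropical operations and every $k\in[d]$,
\[
  \ct_\A(r)_k \ \ge \ \min\bigl(\ct_\A(p)_k,\ \ct_\A(q)_k\bigr).
\]
The argument is short. Since $r_k = \min(\lambda+p_k,\mu+q_k)$, after relabeling $p$ and $q$ we may assume $r_k = \lambda+p_k$. Then for every index $i$ with $p \in S_k^{\max}(v_i)$ and every $j\in[d]$, the coordinatewise bound $r_j \le \lambda+p_j$ combined with $p_j-p_k \le v_{ij}-v_{ik}$ gives $r_j-r_k \le v_{ij}-v_{ik}$, so $r \in S_k^{\max}(v_i)$. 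Counting these indices yields $\ct_\A(r)_k \ge \ct_\A(p)_k$, proving the displayed inequality. Complementing via $\overline{\ct}_\A = n\vones - \ct_\A$ turns this lower bound on coarse type into the upper bound $\overline{\ct}_\A(r) \le \max(\overline{\ct}_\A(p),\overline{\ct}_\A(q)) \le \b$ required for $\min$-tropical convexity.

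For the bounded cotype versions, Theorem~\ref{thm:tconv} identifies the support of $\boundedCD_\A$ with the $\min$-tropical polytope $\tconv(V)$, which is $\min$-tropically convex by definition. Using Proposition~\ref{prop:bounded_coarse} together with the anti-isomorphism between fine (co)type and the face order, each bounded cotype set is identified with the intersection of its unbounded counterpart and $\tconv(V)$. Both factors are $\min$-tropically convex, hence so is their intersection, and a final application of Proposition~\ref{prop:convexcont} finishes the proof.
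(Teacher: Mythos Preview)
Your proof is correct and follows essentially the same route as the paper's: the type cases via Proposition~\ref{prop:typeineq} and Corollary~\ref{cor:coarse_segment_upper}, the cotype cases via the $\min$-segment analog $\ft_\A(r)\ge\min(\ft_\A(p),\ft_\A(q))$ (which is exactly the computation you carry out), and the bounded case by intersecting with the $\min$-tropically convex set $\tconv(V)$. The only cosmetic difference is that for the fine cotype set you observe it is literally an intersection of closed sectors, whereas the paper derives both fine and coarse cotype from the single $\min$-analog inequality; the underlying calculation is identical.
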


\begin{proof}
  The $\max$-tropical convexity of $(\CD_\A,\ft)_{\le B}$ follows from
  Proposition~\ref{prop:typeineq}, and Corollary~\ref{cor:coarse_segment_upper}
  establishes the same property for the coarse variant $(\CD_\A,\ct)_{\le \b}$.

  If $r$ is a point in the $\min$-tropical line segment between $p$ and $q$ a reasoning
  similar to the proof of Proposition~\ref{prop:typeineq} shows that $T_\A(r) \geq
  \min(T_\A(p), T_\A(q))$.  Passing to complements yields $\overline{T}_\A(r) \leq
  \max(\overline{T}_\A(p), \overline{T}_\A(q))$ and thus $\overline{\ct}_\A(r) \leq
  \max(\overline{\ct}_\A(p), \overline{\ct}_\A(q))$ for the coarse types.  We conclude
  that the sets $(\CD_\A,\overline{\ft})_{\le B}$ and $(\CD_\A,\overline{\ct})_{\le \b}$
  are $\min$-tropically convex.  For the last claim, we note that the tropical complex
  $\boundedCD_{\A}$ itself is $\min$-tropically convex; in view of
  Proposition~\ref{prop:bounded_coarse} it is a down-set of $\CD_\A$ under the cotype
  labeling.  Hence both $(\boundedCD_{\A},\overline{\ft})_{\leq B}$ and
  $(\boundedCD_{\A},\overline{\ct})_{\leq \b}$ are intersections of $\min$-tropically
  convex sets.
\end{proof}

Note that the sets of bounded (coarse) type need not be closed or bounded.  In
Figure~\ref{fig:BY:downset} we illustrate an example of a coarse down-set from our running
example.

\begin{figure}[htb]
    \includegraphics[scale=0.8]{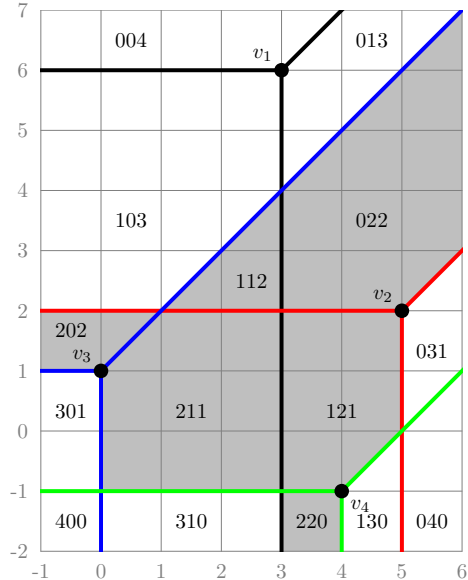}
    \caption{Coarse down-set $(C_\A)_{\le (2,2,2)}$ for the arrangement $\A$ from
      Example~\ref{exmp:BY:cells}. }
    \label{fig:BY:downset}
\end{figure}


%
%
%

\section{Resolutions}
\label{sec:resolutions}

\noindent
In this section we show how the polyhedral complexes $\CD_\A$ and
$\boundedCD_{\A}$ arising from a tropical hyperplane arrangement $\A = \A(V)$
support resolutions for associated monomial ideals.  We begin with a few
definitions.

\begin{dfn}\label{def:ideals}
    Let $\A = \A(V)$ be an arrangement of $n$ tropical hyperplanes in
    $\T^{d-1}$. The \emph{fine type} and \emph{fine cotype ideal} associated to
    $\A$ are the squarefree monomial ideals
    \begin{align*}
        \fI &\ =\ \ideal{ \x^{\ft(p)} : p \in \T^{d-1} } \ \subset \
        \field[x_{11},x_{12},\dots,x_{nd}]\\
        \fcI &\ =\ \ideal{ \x^{\fct(p)} : p \in \T^{d-1} } \ \subset \
        \field[x_{11},x_{12},\dots,x_{nd}]
    \end{align*}
    where $\x^{T(p)} = \prod\{ x_{ij} : T(p)_{ij} = 1\}$.
    Analogously, the \emph{coarse type} and \emph{coarse cotype ideal} associated to $\A$ are given by
    \begin{align*}
        \cI &\ =\ \ideal{ \x^{\ct(p)} : p \in \T^{d-1} } \ \subset \
        \field[x_1,x_2,\dots,x_d]\\
        \ccI &\ =\ \ideal{ \x^{\cct(p)} : p \in \T^{d-1} } \ \subset \
        \field[x_1,x_2,\dots,x_d].
    \end{align*}
    where $\x^{\ct(p)} = x_1^{t_1} x_2^{t_2} \cdots x_d^{t_d}$ with
    $\ct(p)=(t_1,t_2,\dots,t_d)$.
%
\end{dfn}

An analogous construction of fine cotype ideals for classical hyperplane arrangements
first appears in Novik et al.~\cite{NovPosStu} in the form of \emph{oriented matroid
  ideals}, where monomial generators are given by (complements of) covector data.  As
remarked earlier, the fine type of a tropical hyperplane arrangement can be thought of as
the covector data of a tropical oriented matroid and, from this perspective, a tropical
analog of \cite{NovPosStu} is given in \cite{BlockYu06}.  In Sections
\ref{sec:resolutions} and \ref{sec:facecounting} we will establish further connections
between our work and the results and constructions of \cite{BlockYu06}.  A common
generalization in the form of fan arrangements will be given in \cite{san09}.  In
\cite{NovPosStu} the authors also consider \emph{matroid ideals} given by specializing the
underlying oriented matroid ideal, in effect not distinguishing between different sides of
a (classical) hyperplane.  We point out in the case of our coarse type ideals, the
covector data coming from the different \emph{sectors} induced by a (tropical) hyperplane
(the tropical analog of the `side') is maintained, whereas the \emph{labels} on the
hyperplanes are no longer distinguished.

\subsection{Cellular and cocellular resolutions}\label{subsec:resolutions}

The relation between the decompositions of $\T^{d-1}$ and the
various ideals of Definition~\ref{def:ideals} is given via (co)cellular
resolutions.  Whereas cellular resolutions are by now a standard tool in
(combinatorial) commutative algebra, \emph{co}cellular resolutions seem to be
less popular.  As they arise naturally in connection with cotypes, we take the
opportunity to discuss cocellular resolutions alongside cellular resolutions
in some detail.   Our presentation is based on the book \cite{MilStu05} of Miller
and Sturmfels.

For a fixed field $\field$ we let $S = \field[x_1,\dots,x_m]$ be the
polynomial ring equipped with the $\Z^m$-grading given by $\deg \x^{\a} = \a
\in \Z^m$.  A free $\Z^m$-graded resolution $\res_\bullet$ of a $\Z^m$-graded
module $M$ is an algebraic complex of $\Z^m$-graded $S$-modules
\[
    \res_\bullet:\ \
    \cdots \stackrel{\phi_{k+1}}{\longrightarrow}
    F_k
    \stackrel{\phi_k}{\longrightarrow}
    \cdots
    \stackrel{\phi_2}{\longrightarrow}
    F_1
    \stackrel{\phi_1}{\longrightarrow}
    F_0
    \rightarrow 0
\]
where $F_i \cong \bigoplus_{\a \in \Z^m} S(-\a)^{\beta_{i,a}}$ are free $\Z^m$-graded
$S$-modules, the maps $\phi_i$ are homogeneous, and such that the complex is exact except
for $\operatorname{coker} \phi_1 \cong M$.  The resolution is called \emph{minimal}
exactly when $\beta_{i,a} = \dim_{\field} \operatorname{Tor}^S_i(S/I,\field)_a$ and the
numbers $\beta_{i,a}$ are called the \emph{fine graded Betti numbers}.

An efficient way of encoding $\Z^m$-graded resolutions of monomial ideals is
given by \emph{cellular} and \emph{cocellular resolutions}, introduced in
\cite{BayStu98} and \cite{Mil98}, respectively. Let $\P$ be an oriented
polyhedral complex and let $(\a_H)_{H \in \P} \in \Z^m$ be a labeling of the
cells of $\P$ such that
\[
    \a_H \ = \ \max\SetOf{\a_G}{\text{ for $G \subset H$ a face}} \, .
\]
The labeled complex $(\P,\a)$ gives rise to an algebraic complex of free
$\Z^m$-graded $S$-modules in the following way: Let
$(C_\bullet,\partial_\bullet)$ be the cellular chain complex for $\P$ and for
two cells $G, H \in \P$ with $\dim H = \dim G + 1$ denote by $\varepsilon(H,G)
\in \{0, \pm 1\}$ the coefficient of $G$ in the cellular boundary of $H$.  Now
define free modules
\[
    F_i \ := \ \bigoplus_{H \in \P,\, \dim H = i+1} S(-\a_H) \, .
\]
The differentials $\phi_i: F_i \rightarrow F_{i-1}$ are given on generators by
\[
    \phi_i(e_H) \ := \ \sum_{\dim G = \dim H -1 } \varepsilon(H,G) \x^{\a_H -
    \a_G} e_G \, .
\]
It can be verified that this defines an algebraic complex $\res^{\P}_\bullet$.
For $\b \in \Z^m$ denote by $\P_{\le \b}$ the subcomplex given by all cells $H
\in \P$ with $\a_H \le \b$, that is $(\a_H)_i \le b_i$ for all $i \in [m]$.

\begin{lem}[{\cite[Prop.~4.5]{MilStu05}}]\label{lem:cellular_criteria}
    Let $\res_\bullet^{\P}$ be the algebraic complex obtained from the labeled
    polyhedral complex $(\P,\a)$.  If for every $\b \in \Z^n$ the subcomplex
    $\P_{\le \b}$ is acyclic over $\field$, then $\res_\bullet^{\P}$ resolves
    the quotient of $S$ by the ideal $\ideal{ \x^{\a_v} : v \in P \text{
    vertex}}$.  Furthermore, the resolution is minimal if $\a_H \not= \a_G$
    for any two faces $G \subset H$ with $\dim H = \dim G + 1$.
\end{lem}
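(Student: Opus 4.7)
The plan is to exploit the $\Z^m$-grading of $\res_\bullet^\P$ and check exactness one multidegree at a time. For a fixed $\b \in \Z^m$, the degree-$\b$ piece of $S(-\a_H)$ is one-dimensional over $\field$ exactly when $\a_H \le \b$ componentwise and zero otherwise; hence $(F_i)_\b$ has a $\field$-basis indexed precisely by the faces $H \in \P_{\le \b}$ of the appropriate dimension. The compatibility hypothesis $\a_H = \max\{\a_G : G \subset H\}$ is the key point here: it guarantees that if $\a_H \le \b$ then every face $G \subset H$ also satisfies $\a_G \le \b$, so the restriction $(\phi_i)_\b$ lands inside $(F_{i-1})_\b$. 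In this restricted setting the matrix entries $\pm \x^{\a_H - \a_G}$ act as $\pm 1$ on the basis elements, so $(\phi_i)_\b$ coincides with the cellular boundary map of $\P_{\le \b}$ with $\field$ coefficients.

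Consequently, up to a shift in homological degree, the graded piece $(\res_\bullet^\P)_\b$ is (an augmentation of) the cellular chain complex of $\P_{\le \b}$ over $\field$. The assumption that $\P_{\le \b}$ is $\field$-acyclic then yields exactness of $(\res_\bullet^\P)_\b$ in every positive position, while the degree-$\b$ part of $\operatorname{coker}\phi_1$ is one-dimensional or zero according to whether $\P_{\le \b}$ is nonempty, matching the degree-$\b$ piece of the quotient $S/\ideal{\x^{\a_v} : v \text{ a vertex of } \P}$. Assembling these identifications over all $\b \in \Z^m$ (noting that the pieces indexed by $\b \notin \N^m$ vanish trivially) establishes the first assertion.

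For minimality, I would invoke the standard criterion that a $\Z^m$-graded free resolution over $S$ is minimal if and only if no differential admits a nonzero constant matrix entry, equivalently each entry lies in the maximal ideal $\ideal{x_1,\dots,x_m}$. Here the entries are $\pm\x^{\a_H - \a_G}$, which are scalars precisely when $\a_H = \a_G$; so the stated hypothesis is exactly what forces all such entries to be non-units and hence forces minimality. The principal obstacle in this program is bookkeeping: one must verify carefully that the incidence signs $\varepsilon(H,G)$ induced from the cellular boundary of $\P$ really restrict to those of $\P_{\le \b}$, and handle the boundary cases where $\P_{\le \b}$ is empty or fails to contain any vertex (so that the identification of the cokernel with the appropriate piece of $S/\ideal{\x^{\a_v}}$ remains correct).
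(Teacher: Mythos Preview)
Your argument is correct and is essentially the standard proof found in the cited reference \cite[Prop.~4.5]{MilStu05}; the present paper does not supply its own proof of this lemma but simply quotes it. The degree-by-degree identification of $(\res_\bullet^\P)_\b$ with the reduced cellular chain complex of $\P_{\le\b}$, together with the observation that the compatibility condition on labels makes $\P_{\le\b}$ a genuine subcomplex, is exactly the mechanism Miller and Sturmfels use, and your minimality argument via non-unit matrix entries is likewise the standard one.
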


The complex $\res^{\P}_\bullet$ is called a \emph{cellular resolution} if it
meets the criterion above, and we say that the polyhedral complex $\P$
\emph{supports} the resolution.  If the labeling is such that
\[
    \a_H \ = \ \max\SetOf{ \a_G }{ \text{ for $G \supset H$ a face} }
\]
then $\P$ is said to be \emph{colabeled} and gives rise to an algebraic
complex utilizing the cellular \emph{cochain} complex of $\P$.  For this, let
$\res^\bullet_{\P}$ denote the algebraic complex with free $S$-modules $F^i :=
F_i$ as defined above and differentials $\phi^i : F^{i-1} \rightarrow F^i$
with
\[
    \phi^i(e_H) \ := \ \sum_{\dim G = \dim H + 1 } \delta(H,G) \x^{\a_H - \a_G} e_G
\]
where $\delta(H,G)$ records the corresponding coefficient in the coboundary
map for $\P$. If the algebraic complex is acyclic, the resulting resolution is
called \emph{cocellular}.  For $\b \in \Z^m$ the collection $\P_{\le \b}$ of
relatively open cells $H$ with $\a_H \le \b$ is \emph{not} a subcomplex.
However, as a topological space it is the union of the relatively open stars
of cells $G$ for which $\a_G \le \b$ is minimal and the cochain complex of the
nerve is isomorphic to the degree $\b$ component of $\res_\P^\bullet$. This
yields an analogous criterion regarding the exactness of $\res_\P^\bullet$.
The proof of Lemma~\ref{lem:cellular_criteria} given in \cite{MilStu05}
essentially proves the following criterion.

\begin{lem}\label{lem:cocellular_criteria}
    If $\P_{\le \b}$ is acyclic over $\field$ for every $\b \in \Z^n$ then
    $\res^\bullet_{\P}$ resolves $S/I$ where $I = \ideal{ \x^{\a_H} : H \in \P
    \text{ maximal cell } }$.  The resolution is minimal if $\a_H \not= \a_G$
    for any two faces $G \subset H$ with $\dim H = \dim G + 1$.
\end{lem}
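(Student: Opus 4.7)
The plan is to mirror the proof of Lemma~\ref{lem:cellular_criteria} for the cellular criterion, with reduced homology of subcomplexes replaced by reduced cohomology of the relevant filters in the face poset. As in the cellular case, the entire argument reduces to checking exactness of $\res^\bullet_\P$ one multidegree at a time, and the monomial entries $\x^{\a_H-\a_G}$ of the differentials are well defined because the colabel condition $\a_H = \max\{\a_G : G \supset H\}$ forces $\a_G \le \a_H$ whenever $G \supset H$.

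The central step is to fix $\b \in \Z^m$ and describe $(\res^\bullet_\P)_\b$ explicitly. The summand $S(-\a_H)$ contributes $\field$ in degree $\b$ precisely when $\a_H \le \b$, so the $\field$-basis of the degree-$\b$ piece of $F^i$ is indexed by the faces of $\P_{\le \b}$ of the prescribed dimension. The colabel condition makes $\P_{\le \b}$ closed under passing to larger faces, hence its geometric realization is an open subspace of $|\P|$, and the graded differentials match (up to signs) the cellular coboundary maps on this filter. Via the ``union of relatively open stars'' description indicated in the paragraph preceding the lemma, one checks that this cochain complex is quasi-isomorphic to the singular cochain complex of the open set $\P_{\le \b}$, so its cohomology computes $H^*(\P_{\le \b};\field)$. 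The acyclicity hypothesis then forces exactness of $(\res^\bullet_\P)_\b$ in every internal degree, and doing this uniformly in $\b$ yields exactness of $\res^\bullet_\P$.

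To finish the first claim I would identify the cokernel: under the colabel condition the labels of maximal cells are the minimal labels appearing, and the ``augmentation'' end of the cocellular complex is generated by the maximal cells, producing cokernel $S/\ideal{\x^{\a_H} : H \text{ maximal}} = S/I$. Minimality is then immediate from the labeling hypothesis, since $\a_H \ne \a_G$ for any adjacent pair $G \subset H$ implies the exponent $\a_H - \a_G$ is a nonzero vector, so no entry of any $\phi^i$ lies in $\field$.

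The main obstacle is the topological identification in the middle paragraph. Because $\P_{\le \b}$ is \emph{not} a closed subcomplex of $\P$ but an open set formed from relatively open cells, one cannot simply invoke the cellular cochain complex of a subcomplex, but must instead match the algebraic cochain data on the filter with the singular cohomology of the open set---exactly the role of the nerve/open-star construction alluded to in the excerpt, and the content of the analogous discussion in the proof of \cite[Prop.~4.5]{MilStu05}.
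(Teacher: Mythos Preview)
Your proposal is correct and follows exactly the approach the paper indicates: the paper does not give a separate proof of this lemma but rather sketches, in the paragraph preceding it, precisely the argument you spell out (identify the degree-$\b$ strand with a cochain complex on the filter $\P_{\le \b}$, realize the latter as a union of relatively open stars whose nerve cochain complex computes the right cohomology, and then invoke the proof of \cite[Prop.~4.5]{MilStu05}). Your write-up is a faithful expansion of that sketch, including the identification of the cokernel and the minimality check.
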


\subsection{Resolutions from the arrangement}
\label{subsec:type_resolutions}

As usual let $\A = \A(V)$ be a $\max$-tropical hyperplane arrangement in $\T^{d-1}$
and let $\CD_\A$ be the induced polyhedral decomposition of $\T^{d-1}$.  As we have seen, every cell in $\CD_\A$ is naturally assigned a matrix and a vector determined by its fine and coarse type, respectively.  The next result states that these assignments are actually \emph{colabelings} in the sense of
Section~\ref{subsec:resolutions}.

\begin{prop}\label{prop:type_labeling}
    Let $\A = \A(V)$ be an arrangement of tropical hyperplanes in $\T^{d-1}$. For every cell
    $C \in \CD_\A$ of codimension $\ge 1$ we have
    \begin{align*}
        \ft_{\A}(C) &\ =\  \max\{ \ft_{\A}(D) : C \subset D\}, \text{ and }\\
        \ct_{\A}(C) &\ =\  \max\{ \ct_{\A}(D)\,: C \subset D\}\,.
    \end{align*}
    Thus, both the fine type and the coarse type yield a colabeling for
    the complex $\CD_{\A}$.
\end{prop}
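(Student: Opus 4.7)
The plan is to combine the order-reversing bijection between the face poset of $\CD_\A$ and the type poset from \cite[Cor.~13]{DevStu04} (which gives $\ft_\A(D) \le \ft_\A(C)$ whenever $C \subseteq D$, and by summing rows also $\ct_\A(D) \le \ct_\A(C)$) with a local perturbation argument at a point in the relative interior of $C$. The bijection immediately yields the inequalities $\max\{\ft_\A(D) : C \subset D\} \le \ft_\A(C)$ and $\max\{\ct_\A(D) : C \subset D\} \le \ct_\A(C)$, so the content is to show that each coordinate on the right is actually \emph{attained} by some $D \supsetneq C$.

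For a fixed index $k \in [d]$, I plan to perturb a chosen point $p \in C^\circ$ in the direction $-e_k$. The key observation is that this shift uniformly increases every quantity $v_{ik} - p_k$ while leaving $v_{ij} - p_j$ for $j \ne k$ unchanged. A short check then shows that, for all sufficiently small $\epsilon > 0$, the point $p - \epsilon e_k$ belongs to $S_k^{\max}(v_i)$ for exactly the same set of indices $i$ as $p$ itself, so the entire $k$-th column of $\ft_\A$ is preserved. Letting $D$ be the closed cell of $\CD_\A$ whose relative interior contains $p - \epsilon e_k$, the inclusion $p \in \overline{D} \cap C^\circ$ forces $C \subseteq D$ by the polyhedral complex structure, and $D$ satisfies $\ft_\A(D)_{ik} = \ft_\A(C)_{ik}$ for every $i$. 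When $D \supsetneq C$, this single cell simultaneously witnesses the coarse equality at index $k$ and the fine equalities at all entries $(i,k)$ in that column.

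The main obstacle is the corner case where $D = C$, i.e., the direction $-e_k$ is tangent to $C$ at $p$. I would characterize this exactly as the case where no tight defining inequality of $C$ at $p$ involves the $k$-th coordinate, equivalently where the row-support $K_i := \{k' : \ft_\A(C)_{ik'} = 1\}$ reduces to $\{k\}$ for every $i$ with $k \in K_i$. In that situation $p$ lies in the \emph{strict} interior of $S_k^{\max}(v_i)$ for each $i$ with $\ft_\A(C)_{ik} = 1$ (and in the strict exterior for the remaining $i$), so every sufficiently small perturbation of $p$ preserves the $k$-th column of $\ft_\A$. Since $C$ has positive codimension, at least one tight constraint of $C$ at $p$ must exist, and by the above assumption its indices lie in $[d] \setminus \{k\}$; relaxing it in an appropriate direction $w$ produces $p + \epsilon w \notin C$, and the cell containing this point is then a strictly larger face of $C$ whose $k$-th column of $\ft_\A$ still agrees with that of $C$. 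Putting the main case and the corner case together settles both the fine-type and the coarse-type equalities, so fine and coarse type each provide a colabeling of $\CD_\A$.
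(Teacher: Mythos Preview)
Your perturbation direction has the wrong sign. Moving $p \to p - \epsilon e_k$ does, as you say, increase $v_{ik}-p_k$; but the defining inequality of $S_k^{\max}(v_i)$ is $v_{ik}-p_k \le v_{ij}-p_j$, and enlarging the left-hand side does \emph{not} preserve it. Concretely, suppose $\ft_\A(C)_{ik}=1$ and also $\ft_\A(C)_{ij}=1$ for some $j\ne k$ (so $p$ sits on the boundary of $S_k^{\max}(v_i)$ with $v_{ik}-p_k=v_{ij}-p_j$). Then $p-\epsilon e_k\notin S_k^{\max}(v_i)$ for every $\epsilon>0$, and your ``$k$-th column is preserved'' claim fails precisely in the non-degenerate situation where the row $K_i$ has more than one element. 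The remedy is simply to perturb in the direction $+e_k$: this \emph{decreases} $v_{ik}-p_k$, so membership in $S_k^{\max}(v_i)$ is retained, while non-membership is retained for small $\epsilon$ because the relevant strict inequalities are open on $C^\circ$.

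With that sign corrected, your argument is essentially the paper's. The paper perturbs not along the single ray $\R_{\ge 0}e_k$ but into the entire recession cone $Q_k$ of the sector $S_k^{\max}$, which is full-dimensional; since $C$ has positive codimension, $p+Q_k$ automatically escapes $C$ into the open star, so no separate ``corner case'' analysis is needed. Your route, once fixed, yields a slightly stronger intermediate statement (one $D\supsetneq C$ matching the whole $k$-th column at once, rather than one $D$ per entry $(i,k)$), but either granularity suffices for the componentwise maximum.
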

\begin{proof}
    As we remarked before $C \subseteq D$ implies $\ft_{\A}(D) \le
    \ft_{\A}(C)$ and thus we have to show that $\ft_{\A}(C)$ is not strictly
    larger. For $k \in [d]$ consider the set $Q_k \subset \R^d$ given by all
    points $x \in \R^d$ such that $x_k \le x_i$ for all $i \not= k$.
    The set $Q_k$ is an ordinary polyhedron of dimension $d$ and it can be
    checked that for $p \in \T^{d-1}$ we have that $p \in S^{\max}_k(v_i)$
    implies $p + Q_k \subseteq S_k(v_i)$.
    Now, since $\mathrm{codim}(C) \ge 1$ we get that $C + Q_k$ meets the star
    of $C$ thereby showing that if $T_{\A}(C)_{ik} = 1$ for some $i$, then
    there is a cell $D \supset C$ with $T_{\A}(D)_{ik} = 1$.

    The same argument proves the purported equality for the coarse type.
\end{proof}

As an immediate consequence we obtain sets of generators for the fine and
the coarse type ideals.

\begin{cor}\label{cor:coarse_min_gen}
    For a tropical hyperplane arrangement $\A$ both the fine and coarse type
    ideals are generated by monomials corresponding to the respective types on the inclusion-maximal cells
    of $\CD_{\A}$.
\end{cor}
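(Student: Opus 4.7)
The plan is to read this as a direct consequence of Proposition~\ref{prop:type_labeling} together with the observation that fine and coarse types are constant on relatively open cells.

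First, I would note that by definition the fine type $\ft_\A$ (respectively the coarse type $\ct_\A$) is constant on each relatively open cell $C_T^\circ$ of $\CD_\A$, since membership in each closed sector $S_k^{\max}(v_i)$ is determined by $T$.  Hence the indexing set $\{p \in \T^{d-1}\}$ in Definition~\ref{def:ideals} collapses, giving
\[
\fI \ = \ \ideal{\x^{\ft_\A(C)} : C \in \CD_\A}, \qquad \cI \ = \ \ideal{\x^{\ct_\A(C)} : C \in \CD_\A} \, ,
\]
with $C$ ranging over all cells (of every dimension) of $\CD_\A$.

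Next I would argue that the generators coming from non-maximal cells are redundant.  Let $C \in \CD_\A$ be a cell of codimension at least one.  By Proposition~\ref{prop:type_labeling}, $\ft_\A(C) = \max\{\ft_\A(D) : C \subsetneq D\}$, so in particular $\ft_\A(C) \ge \ft_\A(D)$ componentwise for every cell $D$ strictly containing $C$.  Since $\CD_\A$ is a finite-dimensional polyhedral complex, iterating this (passing from a non-maximal supercell to one of \emph{its} supercells) terminates at some inclusion-maximal cell $D_{\max} \supseteq C$ satisfying $\ft_\A(C) \ge \ft_\A(D_{\max})$.  This componentwise inequality is exactly the divisibility $\x^{\ft_\A(D_{\max})} \mid \x^{\ft_\A(C)}$, so the generator $\x^{\ft_\A(C)}$ lies in the ideal generated by $\x^{\ft_\A(D_{\max})}$ and can be dropped.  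The second identity of Proposition~\ref{prop:type_labeling} gives the same conclusion for the coarse type ideal $\cI$ via the same divisibility argument.

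There is essentially no obstacle here: the corollary is a one-line consequence of Proposition~\ref{prop:type_labeling} once one observes that a colabeling in the sense of Section~\ref{subsec:resolutions} is exactly the condition needed to ensure that only the top-dimensional generators matter.  The only mild subtlety is noting that every non-maximal cell sits inside some inclusion-maximal one, which follows from finite-dimensionality of $\CD_\A$.
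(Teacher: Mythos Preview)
Your proposal is correct and matches the paper's own treatment: the corollary is stated there without proof as an immediate consequence of Proposition~\ref{prop:type_labeling}, and you have simply written out the divisibility argument that makes this explicit.
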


We now have all the ingredients to establish our first main result regarding the relation
between fine/coarse type ideals and the polyhedral decomposition $\CD_\A$.

\begin{thm}\label{thm:cocellular}
  Let $\A = \A(V)$ be a tropical hyperplane arrangement, and let $\CD_{\A}$ be the
  decomposition of the tropical torus $\T^{d-1}$ induced by $\A$.  Then with labels given
  by fine type (respectively, coarse type) the labeled complex $\CD_{\A}$ supports a
  minimal cocellular resolution of the fine type ideal $\fI$ (respectively, the coarse
  type ideal $\cI$).
\end{thm}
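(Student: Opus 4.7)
My plan is to invoke Lemma~\ref{lem:cocellular_criteria} with the polyhedral complex $\P = \CD_\A$ colabeled by either the fine type $\ft_\A$ or the coarse type $\ct_\A$. Three things must be checked: (i) the labeling is genuinely a colabeling and the ideal produced is the one claimed; (ii) for every multidegree, the cocellular down-set is acyclic over $\field$; and (iii) labels strictly change along incident codimension-one pairs, giving minimality.

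For (i), Proposition~\ref{prop:type_labeling} verifies the colabeling condition for both $\ft_\A$ and $\ct_\A$, and Corollary~\ref{cor:coarse_min_gen} identifies the ideals produced by the cocellular construction with $\fI$ and $\cI$ as defined in Definition~\ref{def:ideals}. For (iii), the minimality criterion asks that no two incident cells of consecutive dimension carry the same label. For the coarse type this is precisely the content of Proposition~\ref{prop:minimal}, and for the fine type it is immediate from the strict comparison $\ft_\A(D) \lneq \ft_\A(C)$ along any proper inclusion $C \subsetneq \overline{D}$ recorded in \cite[Cor.~13]{DevStu04}.

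The core of the proof is (ii). Fix $\b \in \N^d$. The cocellular down-set at degree $\b$ is the union $\bigcup\{H^\circ : \ct_\A(H) \le \b\}$ of those relatively open cells whose coarse type is bounded above by $\b$; since $\ct_\A$ is constant on each relatively open cell, this topological subspace coincides with the set $(\CD_\A,\ct)_{\le \b} = \{p\in\T^{d-1} : \ct_\A(p)\le \b\}$ of Corollary~\ref{cor:downsets}. That corollary shows the set is $\max$-tropically convex, and Proposition~\ref{prop:convexcont} then gives contractibility, hence acyclicity over any field $\field$. The fine type case is handled identically, replacing $\b \in \N^d$ by $B \in \ZO^{n\times d}$ and $\ct_\A$ by $\ft_\A$, and using the fine-type half of Corollary~\ref{cor:downsets}.

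The main obstacle is conceptual rather than computational: the crux of the argument, namely the contractibility of each multidegree down-set, has been isolated in Section~\ref{sec:coarse} as the tropical convexity assertion of Corollary~\ref{cor:downsets}. Once the dictionary between the topological $\P_{\le \b}$ of Lemma~\ref{lem:cocellular_criteria} and the tropical down-set $(\CD_\A,\ct)_{\le \b}$ is made explicit, the proof is a short assembly of previously established results. It is worth emphasizing that no genericity hypothesis on $\A$ is used, since every ingredient (type/coarse-type colabeling, strict comparison along face inclusions, tropical convexity of down-sets) was established in full generality.
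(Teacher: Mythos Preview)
Your proof is correct and follows essentially the same approach as the paper's own proof: invoke Proposition~\ref{prop:type_labeling} for the colabeling, Corollary~\ref{cor:downsets} (via Proposition~\ref{prop:convexcont}) for acyclicity of the down-sets, and Proposition~\ref{prop:minimal} for minimality, assembled through Lemma~\ref{lem:cocellular_criteria}. Your version is in fact slightly more careful, since you explicitly cite Corollary~\ref{cor:coarse_min_gen} to identify the resolved ideal and separately justify the fine-type minimality via \cite[Cor.~13]{DevStu04}, whereas the paper relies on this being contained in the proof of Proposition~\ref{prop:minimal}.
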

\begin{proof}
  By Proposition~\ref{prop:type_labeling} the polyhedral complex $\CD_\A$ is colabeled by
  both fine and coarse type. It follows from Lemma~\ref{lem:cocellular_criteria}
  and Corollary~\ref{cor:downsets} that this yields a cellular resolution of the
  respective type ideal. The minimality is a consequence of
  Proposition~\ref{prop:minimal}.
\end{proof}

A key point is that all of the above is valid for point configurations $V$
which are not necessarily in general position.  In the case of hyperplanes in
general position, the coarse type ideal is well known and, in particular, is
independent of the choice of hyperplanes.

\begin{cor}\label{cor:cocellular_generic}
    Let $\A = \A(V)$ be a \emph{sufficiently generic} arrangement of $n$
    tropical hyperplanes in $\T^{d-1}$.  Then $\CD_{\A}$ supports a minimal
    cocellular resolution of
    \[
        \cI \ =\ \ideal{x_1,\dots,x_d}^n \,
    \]
    the $n$-th power of the homogeneous maximal ideal.
\end{cor}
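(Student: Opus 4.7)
The plan is to deduce this corollary directly from the two main ingredients already in place: Theorem~\ref{thm:cocellular}, which hands us a minimal cocellular resolution of $\cI$ supported on $\CD_\A$ for \emph{any} arrangement, and Theorem~\ref{thm:t_injective}, which pins down precisely what the generators of $\cI$ are in the sufficiently generic case. So the only real content is identifying $\cI$ with $\ideal{x_1,\dots,x_d}^n$.

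First I would invoke Corollary~\ref{cor:coarse_min_gen}, which says that $\cI$ is minimally generated by the monomials $\x^{\ct(C)}$ as $C$ ranges over the inclusion-maximal cells of $\CD_\A$. Since each maximal cell of $\CD_\A$ has dimension $d-1$, these monomials are $\x^{\ct(C)}$ for $C$ a maximal cell, and each such monomial is of total degree $n$ because $\ct(C) = (t_1,\dots,t_d)$ records how many of the $n$ hyperplanes have $C$ in their $k$-th sector for each $k$, summing to $n$.

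Next I would apply Theorem~\ref{thm:t_injective} in the sufficiently generic case: the map $\ct$ from the maximal cells of $\CD_\A$ to compositions of $n$ into $d$ parts is a \emph{bijection}. Since compositions of $n$ into $d$ parts are in bijective correspondence with degree-$n$ monomials in $x_1,\dots,x_d$, the generators $\x^{\ct(C)}$ range over \emph{all} degree-$n$ monomials in the $x_i$. Hence
\[
\cI \ = \ \ideal{\x^{\ct(C)} : C \in \CD_\A \text{ maximal}} \ = \ \ideal{x_1,\dots,x_d}^n.
\]
Combining this identification with the cocellular resolution statement in Theorem~\ref{thm:cocellular} yields the corollary.

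There is no real obstacle here beyond bookkeeping; the work has been done upstream. The one thing to double-check is that the minimality assertion transfers: Theorem~\ref{thm:cocellular} already provides minimality of the resolution of $\cI$ (relying on Proposition~\ref{prop:minimal}, which does not require general position), and this minimality is preserved under the identification $\cI = \ideal{x_1,\dots,x_d}^n$.
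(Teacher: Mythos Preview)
Your proposal is correct and is precisely the argument the paper intends: the corollary is stated without proof immediately after Theorem~\ref{thm:cocellular}, and the identification $\cI = \ideal{x_1,\dots,x_d}^n$ follows exactly as you describe from Corollary~\ref{cor:coarse_min_gen} together with the bijectivity assertion in Theorem~\ref{thm:t_injective}. One tiny quibble: Corollary~\ref{cor:coarse_min_gen} only claims a generating set, not a \emph{minimal} one, but this is irrelevant for your purposes since you only need the generators to span $\ideal{x_1,\dots,x_d}^n$.
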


Since the minimal resolution of an ideal is unique up to isomorphism, we have
that the resolution arising from Corollary~\ref{cor:cocellular_generic} is
always isomorphic (as a chain complex), to the well-known Eliahou-Kervaire
resolution \cite[\S2.3]{MilStu05}.  However,
Corollary~\ref{cor:cocellular_generic} shows that there is a multitude of
colabeled complexes coming from tropical hyperplane arrangements that give
rise to a cellular description of the minimal resolution of
$\ideal{x_1,\dots,x_n}^d$.

We next turn to the other class of ideals introduced in the beginning of this section, namely the fine and coarse \emph{co}type ideals.  As was the case with the type ideals, we first consider the labeling of the relevant complexes.

\begin{prop}\label{prop:cotype_labeling}
    Let $\A = \A(V)$ be an arrangement of tropical hyperplanes. For every cell
    $D \in \CD_\A$ of dimension $\ge 1$ we have
    \begin{align*}
        \fct_{\A}(D) &\ =\  \max\{ \fct_{\A}(C) : C \subset D\}, \text{ and }\\
        \cct_{\A}(D) &\ =\  \max\{ \cct_{\A}(C)\,: C \subset D\}\,.
    \end{align*}
    Thus, both the fine and coarse types yield labelings for
    the complex $\CD_{\A}$.
\end{prop}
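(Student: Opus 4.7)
My plan is to prove this proposition as the dual of Proposition~\ref{prop:type_labeling}, exploiting the complementation $\fct_\A = \vones - \ft_\A$ that defines cotypes from types. The proof separates into a monotonicity direction and an attainment direction.

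First, for monotonicity: I would invoke \cite[Cor.~13]{DevStu04} (already used in the proof of Proposition~\ref{prop:type_labeling}). The inclusion $C \subseteq D$ of closed cells gives $\ft_\A(D) \leq \ft_\A(C)$ componentwise. Taking entrywise complements reverses this to $\fct_\A(C) \leq \fct_\A(D)$, and summing over the $n$ rows of this matrix inequality yields the coordinate-wise bound $\cct_\A(C) \leq \cct_\A(D)$. This establishes $\max\{\fct_\A(C) : C \subset D\} \leq \fct_\A(D)$, and analogously for the coarse cotype.

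Next, for attainment, I would argue that each cotype entry $\fct_\A(D)_{ik} = 1$ is witnessed by a proper sub-face $C \subset D$. Indeed, $\fct_\A(D)_{ik} = 1$ means $D \not\subseteq S_k^{\max}(v_i)$; since the closed sector is an ordinary convex polyhedron, at least one vertex of $D$ (or, when $D$ is unbounded and all its vertices lie in the sector, a ray in the recession cone of $D$) must escape $S_k^{\max}(v_i)$. The associated $0$- or $1$-face of $D$ is then a proper cell $C \subset D$ with $\fct_\A(C)_{ik} = 1$. For the coarse cotype, I would refine this by fixing a coordinate $k$ and taking $C_k$ to be the face of $D$ on which $p_k$ is maximized: this single face $C_k$ escapes every $k$-th sector that $D$ escapes, so $\cct_\A(C_k)_k = \cct_\A(D)_k$, giving componentwise attainment.

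The main obstacle is handling unbounded cells, where vertices alone need not witness every cotype entry and the classical convex-hull argument must be supplemented by considerations of the recession cone of $D$; one must also verify that the constructed face is itself a cell of $\CD_\A$ rather than merely an abstract face of the polyhedron $D$. Once those polyhedral-geometric details are checked, everything else follows routinely from \cite[Cor.~13]{DevStu04} by complementation and row-summation.
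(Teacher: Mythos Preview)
Your opening reduction---complementing the inequality $\ft_\A(D)\le\ft_\A(C)$ from \cite[Cor.~13]{DevStu04} to obtain $\fct_\A(C)\le\fct_\A(D)$, and then summing over rows for the coarse version---is exactly the paper's first move, where the assertion is rephrased as $\ft_\A(D)\ge\min\{\ft_\A(C):C\subset D\}$. Since a cell is a face of itself, this $\min$ includes $C=D$, and the displayed equalities in the proposition then follow at once from monotonicity; that is all the ``labeling'' condition of Section~\ref{subsec:resolutions} requires.

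Your separate ``attainment'' step over \emph{proper} faces is therefore superfluous, and your worry about unbounded cells is in fact decisive against that stronger claim. For any unbounded edge $D\in\CD_\A$ whose sole vertex $v$ lies on the boundary of a sector $S_k^{\max}(v_i)$ while $D^\circ$ lies outside it---this already occurs for every unbounded edge emanating from the apex in a single-hyperplane arrangement---one has $\fct_\A(D)_{ik}=1$ but $\fct_\A(v)_{ik}=0$, so no proper face witnesses that entry. (As an aside, your coarse heuristic is oriented the wrong way: enlarging $p_k$ relative to the other coordinates makes the inequalities $p_k-p_j\ge v_{ik}-v_{ij}$ defining $S_k^{\max}(v_i)$ \emph{easier} to satisfy, so the face on which $p_k$ is maximal is the one most likely to remain in the $k$-th sector, not to escape it.) The paper, after the same complementation, instead reuses the invariance cone $Q_k$ from the proof of Proposition~\ref{prop:type_labeling} to argue geometrically rather than via vertices; but for the proposition as stated, your monotonicity paragraph already suffices.
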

\begin{proof}
    The assertion follows by showing that
    \[
        \ft_{\A}(D)  \ \ge \  \min\{ \ft_{\A}(C) : C \subset D\}\,.
    \]
    We mimic the argument in the proof of
    Proposition~\ref{prop:type_labeling}.  It follows from the
    full-dimensionality of $Q_k$ that $C + Q_k$ intersects $D$ for every cell
    $C \subset D$ and thus $\ft(D)_{ik} \ge \ft(C)_{ik}$ for all $C$ in the
    boundary of $D$.
\end{proof}

The previous proposition implies that the cotype ideals are generated by the
inclusion-\emph{minimal} faces (namely, the 0-dimensional cells) of $\CD_{\A}$.  As a
consequence (together with the last part of Corollary~\ref{cor:downsets}) we see that
resolutions of these ideals are supported on the collection of bounded faces of
$\CD_{\A}$, and we obtain our next main result.  This generalizes
\cite[Theorem~1]{BlockYu06}.

\begin{thm}
  Let $\A = \A(V)$ be an arrangement of tropical hyperplanes and let $\boundedCD_\A$ be
  the subcomplex of bounded cells of $\CD_{\A}$. Then $\boundedCD_\A$, with labels given
  by fine cotype (respectively, coarse cotype) supports a minimal resolution of the fine
  cotype ideal $\fcI$ (respectively, coarse cotype ideal $\ccI$).
\end{thm}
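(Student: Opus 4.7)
The plan is to apply Lemma~\ref{lem:cellular_criteria} to $\boundedCD_\A$ equipped with the cotype labelings; the fine and coarse cotype cases proceed in parallel, so I describe only the coarse version explicitly.

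The first step is to verify that the coarse cotype defines a valid labeling in the sense required by Lemma~\ref{lem:cellular_criteria}. This is exactly the content of Proposition~\ref{prop:cotype_labeling}, which asserts
\[
\cct_\A(D) \ =\ \max\{\cct_\A(C) : C \subset D\}
\]
for every cell $D$ of dimension at least one. The property descends to $\boundedCD_\A$ since faces of bounded cells are themselves bounded.

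The second step is to verify the acyclicity hypothesis: for every $\b \in \Z^d$ the subcomplex $(\boundedCD_\A, \cct)_{\le \b}$ must be acyclic over $\field$. The final assertion of Corollary~\ref{cor:downsets} shows that this subcomplex is min-tropically convex, hence contractible by Proposition~\ref{prop:convexcont}, and therefore acyclic over any coefficient field. Lemma~\ref{lem:cellular_criteria} then produces a cellular resolution of the ideal generated by cotype monomials at the vertices of $\boundedCD_\A$.

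The third step is to identify the resolved ideal with $\ccI$. Every $0$-dimensional cell of $\CD_\A$ is trivially bounded, so the vertices of $\boundedCD_\A$ coincide with the vertices of $\CD_\A$. Iterating Proposition~\ref{prop:cotype_labeling} downward from any cell $C$ to a vertex $v$ in its closure yields $\cct_\A(v) \le \cct_\A(C)$, whence $\x^{\cct_\A(v)}$ divides every generator $\x^{\cct_\A(p)}$ of $\ccI$ as $p$ ranges over the relative interior of $C$. Consequently, the ideal generated by vertex cotype monomials coincides with $\ccI$. Finally, Proposition~\ref{prop:minimal} guarantees distinct coarse types, and thus distinct coarse cotypes, for any pair of distinct cells in a containment relation, so the minimality criterion of Lemma~\ref{lem:cellular_criteria} is satisfied.

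The main obstacle is the third step. A priori, $\ccI$ has generators indexed by \emph{all} points of $\T^{d-1}$, so one must confirm that these generators all reduce to those coming from vertices of $\boundedCD_\A$; this uses both the labeling property above and the fact that in a tropical hyperplane arrangement with $n \ge 1$ every cell of $\CD_\A$ has a vertex in its closure. The remaining steps largely assemble results already established earlier in the paper.
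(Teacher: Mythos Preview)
Your proof is correct and follows essentially the same route as the paper: Proposition~\ref{prop:cotype_labeling} for the labeling, the last part of Corollary~\ref{cor:downsets} for the acyclicity of down-sets in $\boundedCD_\A$, and Proposition~\ref{prop:minimal} for minimality. You have spelled out the identification of the resolved ideal with $\ccI$ (and the need for each cell to contain a vertex) more carefully than the paper, which simply remarks that Proposition~\ref{prop:cotype_labeling} forces the cotype ideals to be generated at the $0$-cells.
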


\begin{rem}
  For fixed $n$ and $d$ we have a ring map $\psi : \field[x_{11},\dots,x_{nd}] \rightarrow
  \field[x_1,\dots,x_d]$ determined by $x_{ij} \mapsto x_j$. This map takes fine (co)type
  ideals to coarse (co)type ideals.  In either case, both the fine and the coarse ideals
  are resolved by the same polyhedral complex, and one might hope that the resolution of
  the fine ideal descends to the coarse ideal via $\psi$. Such an approach is employed in
  \cite{NovPosStu} in the passage from the oriented matroid to the matroid ideal.  There
  it is shown that any oriented matroid ideal $J$ is Cohen-Macaulay, with $(x_{11}-x_{21},
  x_{21}-x_{22}, \dots, x_{n1}-x_{n2})$ forming a linear system of parameters (and hence a
  regular sequence) in $S/J$.  Unfortunately, this approach does \emph{not} work in our
  context. It turns out that the fine cotype ideals $\fcI$ are not in general
  Cohen-Macaulay.  Indeed, regarding $\fcI$ as a Stanley-Reisner ideal we see that
  $S/\fcI$ has dimension $dn-2$, whereas an application of the Auslander-Buchsbaum formula
  (and knowledge of the minimal resolution) shows that its depth is $dn - d$.  The
  coarsening sequence $\{x_{11} - x_{21}, \dots, x_{11} - x_{n1}, \dots, x_{1d}-x_{nd}\}$
  is also of length $dn-d$, but one can show that it is not a regular sequence (not even a
  system of parameters).
\end{rem}

\section{The mixed subdivision picture}\label{sec:Mixed}

\noindent
As mentioned in Section \ref{subsec:coarsetypes}, an arrangement $\A = \A(V)$ of $n$
hyperplanes in $\T^{d-1}$ gives rise to a regular subdivision of the product of two
simplices $\Delta_{n-1} \times \Delta_{d-1}$.  To see this, we let $\Delta_{n-1} \subset
\R^n$ denote the convex hull of the standard basis vectors $\{e_i: 1 \leq i \leq n\}$.  If
$V=(v_1, \dots, v_n)$ is the ordered sequence of the apices of the arrangement $\A$, we
lift each vertex $(e_i,e_j)$ of $\Delta_{n-1} \times \Delta_{d-1}$ to the height
$(v_i)_j$, the $j$-th coordinate of the $i$-th apex.  Taking the lower convex hull of the
resulting configuration of points induces a (by definition regular) subdivision of
$\Delta_{n-1} \times \Delta_{d-1}$.  A main result from \cite{DevStu04} is that the
combinatorial types of the tropical complexes generated by a set of $n$ vertices in
$\T^{d-1}$, seen as polytopal complexes, are in bijection with the regular polyhedral
subdivisions of $\Delta_{n-1} \times \Delta_{d-1}$.

At this point, the reader may find it useful to contemplate the schematic
diagram in Figure~\ref{fig:schema} below.  It sketches how various relevant
combinatorial concepts are related.  Again $V$ denotes our ordered sequence of
$n$ points in $\T^{d-1}$, which can be thought of as the apices of a max
hyperplane arrangement $\A$ (where the hyperplane with apex $v_i$ corresponds
to the tropical vanishing of the form $-v_i \odot x$).  We have
$\boundedCD_{\A}$ given by the $\min$-tropical convex hull $\tconv^{\min}(V)$,
which is the bounded part of the polyhedral complex $\CD_\A$ determined by the
coarse types of the arrangement $\A$.  The correspondence described above
(here denoted `DS') associates to $\boundedCD_{\A}$ a certain regular
polyhedral subdivision $\Delta_{\A}$ of $\Delta_{n-1} \times \Delta_{d-1}$;
this subdivision is dual to the complex $\CD_\A$.  The Cayley trick (described
in more detail below, here denoted `Cay') then associates to that regular
subdivision a regular \emph{mixed} subdivision $\Sigma_\A$ of the dilated
simplex $n \Delta_{d-1}$.  Moreover, we have the `switch' isomorphism obtained
by exchanging coordinates of the product; applying the Cayley trick then gives
a regular mixed subdivision $\Sigma^*_\A$ of $d \Delta_{n-1}$.  Finally,
observe that $n\Delta_{d-1}$ is the Newton polytope of the polynomial $h$
defined in \eqref{eq:h}.  The mixed subdivision $\Sigma_\A$ is the
\emph{privileged} regular subdivision of $n\Delta_{d-1}$ defined by the
coefficients of $\trop^{\max}(h)$.  The corresponding arrow in the diagram is
marked `New' for `Newton'.

\begin{figure}[hb]
  $\xymatrix{
    & V \ar[d] \ar[ddr]^{\textrm{New}} &\\
    & \boundedCD_{\A} = \tconv^{\min}(V) \ar[dl]_{{\rm bounded}} \ar[d]^{\textrm{DS}} &\\
    \CD = \CD^{\max}(V) \ar[r]^-{\textrm{dual}} & \Delta_{\A} \textrm{ of } \Delta_{n-1} \times
    \Delta_{d-1} \ar[r]^-{\textrm{Cay}} \ar[d]_{\cong}^{\textrm{switch}} & \Sigma_{\A} \textrm{ of } n \Delta_{d-1} \\
    & \Delta_{\A} \textrm{ of } \Delta_{d-1} \times \Delta_{n-1} \ar[r]_-{{\rm Cay}} &
    \Sigma^*_{\A} \textrm{ of } d \Delta_{n-1} }$
  \caption{Diagram explaining how the various combinatorial concepts are related.}
  \label{fig:schema}
\end{figure}
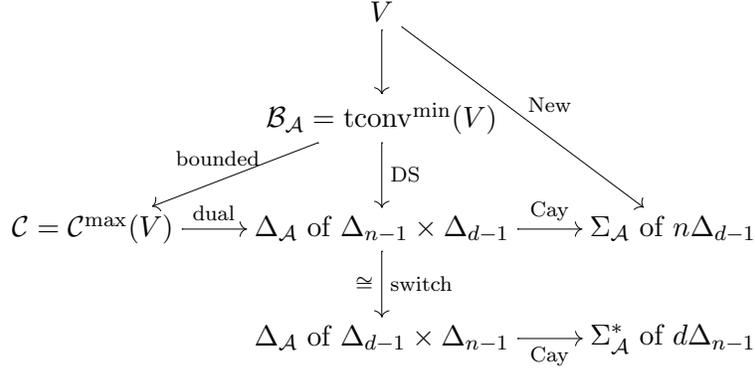

\subsection{The Cayley trick and mixed subdivisions} \label{sec:Cayley}

The \emph{Cayley trick} gives a bijection between the polyhedral subdivisions of
$\Delta_{n-1} \times \Delta_{d-1}$ and the \emph{mixed} subdivisions of $n \Delta_{d-1}$.
More generally, the Cayley trick relates mixed subdivisions of the Minkowski sum of
several polytopes $P_1,P_2, \dots, P_k$ with the polyhedral subdivisions of a certain
polytope known as the \emph{Cayley embedding} of those polytopes.  We refer to
\cite{Santos05} for a good account of the Cayley trick in the context of products of
simplices, but we wish to discuss some of the basics here.  This way it will also become
apparent in which way products of simplices arise.  The collection of all polyhedral
subdivisions of a polytope $P$ form a poset under refinement, and the minimal elements are
the triangulations of $P$.

Recall that if $P_1,P_2, \dots, P_n$ are polytopes in $\R^d$, then the \emph{Minkowski sum} is
defined to be the polytope
\[
P_1 + P_2 + \cdots + P_n \ := \ \big \{x_1 + x_2 + \cdots + x_n: x_i \in P_i \big \} \, .
\]
If $P = P_1 + P_2 + \cdots + P_n$ is a Minkowski sum of polytopes, a \emph{mixed cell} $B
\subseteq P$ is defined to be a Minkowski sum $B_1 + \cdots + B_n$, where each $B_i$ is a
polytope with vertices among those of $P_i$.  A \emph{mixed subdivision} of $P$ is then a
subdivision of $P$ consisting of mixed cells.  Once again, the mixed subdivisions of $P$
form a poset under refinement, and in this case the minimal elements are called
\emph{fine} mixed subdivisions.

Now, if as above $P_1, P_2, \dots, P_n$ are polytopes in $\R^d$, we again let $\{e_1, \dots,
e_n\}$ be the standard basis of $\R^n$ and let $\iota_i: \R^d \rightarrow \R^d
\times \R^n$ denote the inclusion $x \mapsto (x,e_i)$.  The \emph{Cayley embedding} of
$P_1,P_2, \dots, P_n$ is defined to be
\[
\Cayley(P_1,P_2, \dots, P_n) \ := \ \conv \big(\bigcup \iota_i(P_i)\big) \, ,
\]
the convex hull of all these inclusions.  The intersection of $\Cayley(P_1,P_2, \dots,
P_n)$ with the affine subspace $\R^d \times \{\sum \frac{1}{n} e_i \}$ equals the (scaled)
Minkowski sum $(\frac{1}{n} \sum P_i)\times \{\sum \frac{1}{n} e_i \}$.  This way each
polyhedral subdivision of the Cayley embedding $\Cayley(P_1,P_2, \dots, P_n)$ induces a
polyhedral subdivision of the Minkowski sum $\sum P_i$.  The Cayley trick says that this
correspondence gives a poset isomorphism between the polyhedral subdivisions of
$\Cayley(P_1, \dots, P_n)$ and the mixed subdivisions of $\sum P_i$, with regular
subdivisions corresponding to coherent mixed subdivisions.

Here we have $P_1 = P_2 = \cdots = P_n = \Delta_{d-1}$, so that $\Cayley(\Delta_{d-1},
\dots, \Delta_{d-1}) = \Delta_{n-1} \times \Delta_{d-1}$, and hence the Cayley trick
provides a bijection between the subdivisions of the product $\Delta_{n-1} \times
\Delta_{d-1}$ and mixed subdivisions of the dilated simplex $n \Delta_{d-1}$.  In this
case, a mixed cell $\tau$ in a mixed subdivision of $n \Delta_{d-1}$ is given by $\tau =
\sum_{j=1}^n \Delta_{I_j}$ where each $\Delta_{I_j} = \conv \smallSetOf{ e_i }{ i \in I_j
}$ is a face of $\Delta_{d=1}$ with $I_j \subseteq [d]$.  Hence we can represent the cell
as simply $\tau = I_1 + I_2 + \cdots + I_n$.

We have seen that (regular) subdivisions of $\Delta_{n-1} \times \Delta_{d-1}$ are closely
related to the combinatorics of tropical complexes, and the aim of this section is to
interpret our results from the previous sections in terms of mixed subdivisions of dilated
simplices.  For this it will be convenient to have a notion of coarse type of a cell
defined purely in terms of the mixed subdivision.  Of course these are obtained by
applying the Cayley trick to the coarse types introduced above.

\begin{dfn}\label{dfn:coarsetypemixed}
  Suppose $\tau = I_1 + I_2 + \cdots + I_n$ is an $i$-dimensional mixed cell in a mixed
  subdivision of $n \Delta_{d-1}$, with each $I_j \subseteq [d]$.  The \emph{coarse type}
  $\ct(\tau) \in \N^d$ is a vector whose $i$-th coordinate is given by $\#
  \smallSetOf{I_j}{i \in I_j}$, the number of occurrences of $i$ in the decomposition of
  $\tau$.  The \emph{dual coarse type} $\dt(\tau) \in \N^n$ is a vector whose $i$-th
  coordinate is given by $\# I_i$, the number of elements of $I_i$.
\end{dfn}

The following can be seen as an interpretation of Theorem \ref{thm:t_injective} in the
context of mixed subdivisions.  Here we provide a complete proof here which does appeal to
tropical geometry and hence applies to the more general situation of not necessarily regular subdivisions.

\begin{prop}\label{prop:latticepoints}
  In any fine mixed subdivision $\Sigma$ of $n \Delta_{d-1}$, the set of $0$-dimensional
  cells are precisely the lattice points $n \Delta_{d-1} \cap \Z^{d}$, and the collection
  of coarse types of these cells are in bijection with the set of compositions of $n$ into
  $d$ parts.
\end{prop}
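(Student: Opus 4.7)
The plan is to establish the two directions of the first assertion separately; the bijection with compositions then follows tautologically. The easy direction, that every $0$-cell of $\Sigma$ is a lattice point of $n\Delta_{d-1}$, is immediate from the fact that any vertex of a mixed cell $\tau = I_1 + \cdots + I_n$ has the form $\sum_{j=1}^{n} e_{f(j)}$ with $f(j) \in I_j$, and such a sum is a nonnegative integer vector with coordinate sum $n$.

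For the reverse inclusion I would prove the stronger local statement that any lattice point $p = (t_1, \dots, t_d)$ contained in a cell $\tau = \Delta_{I_1} + \cdots + \Delta_{I_n}$ of $\Sigma$ must already be a vertex of $\tau$. Writing $p = \sum_j v_j$ with $v_j = \sum_{i \in I_j} s_{ji}\, e_i \in \Delta_{I_j}$ gives rise to a bipartite transportation problem on the incidence multigraph $G_\tau$ with vertex classes $[n]$ and $[d]$ and an edge $(j,i)$ for each $i \in I_j$: unit row sums $\sum_{i \in I_j} s_{ji} = 1$ at each $j$ and integer column sums $\sum_{j:\, i \in I_j} s_{ji} = t_i$ at each $i$, with $s_{ji} \geq 0$. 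The crucial combinatorial input is that the fineness condition $\dim \tau = \sum_j (|I_j|-1)$ is equivalent to $G_\tau$ being acyclic, i.e.\ a forest, by a standard cycle-space count on the kernel of the Minkowski-summation map. On a forest with integer marginals, iterated leaf-peeling produces a solution with integer flows, so $s_{ji} \in \Z_{\geq 0}$; combined with the unit row sums this forces, for each $j$, exactly one index $f(j) \in I_j$ with $s_{j,f(j)} = 1$ and the remaining $s_{ji}$ zero. Consequently $p = \sum_j e_{f(j)}$ is a vertex of $\tau$; since $p$ lies in the relative interior of a unique cell of $\Sigma$, that cell must be the $0$-cell $\{p\}$.

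The bijection with compositions is then automatic: a $0$-cell $\{e_{f(1)}\} + \cdots + \{e_{f(n)}\}$ has coarse type $(|f^{-1}(1)|, \dots, |f^{-1}(d)|)$, which is exactly the coordinate vector of the corresponding lattice point $\sum_j e_{f(j)}$. Hence the coarse type map on $0$-cells is the canonical identification of $n\Delta_{d-1} \cap \Z^d$ with the set $\{(t_1, \dots, t_d) \in \N^d : \sum_i t_i = n\}$ of compositions of $n$ into $d$ parts. The main obstacle is the combinatorial middle paragraph, where one must verify both the equivalence between fineness of a mixed cell and the forest structure of its incidence graph (uniformly for cells of all dimensions, not only top-dimensional ones) and the integrality of leaf-peeled transportation flows; both are classical in polyhedral combinatorics but merit careful bookkeeping.
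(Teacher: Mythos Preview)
Your argument is correct and follows the same overall strategy as the paper: show that every lattice point contained in a fine mixed cell $\tau$ is already a vertex of $\tau$, then read off the bijection with compositions. The difference lies in how the key step is justified. The paper invokes an external result (Postnikov, \emph{Permutohedra, associahedra, and beyond}, Prop.~14.12) to write any lattice point of $\tau=\sum_j\Delta_{I_j}$ as $\sum_j e_{i_j}$ with $i_j\in I_j$, and then uses that a fine mixed cell is combinatorially the product $\prod_j\Delta_{I_j}$ to conclude that such a sum of vertices is a vertex. You instead unpack this step directly: the equality $\dim\tau=\sum_j(|I_j|-1)$ forces the bipartite incidence graph $G_\tau$ to be a forest (trivial cycle space of the summation map), whence the transportation constraints have a \emph{unique} solution, which leaf-peeling shows to be integral; nonnegativity then comes for free from the given decomposition $p=\sum_j v_j$, and unit row sums force each $v_j=e_{f(j)}$.

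Both routes are standard polyhedral combinatorics; yours has the advantage of being self-contained and of making explicit why fineness is exactly the hypothesis needed (namely, it is what kills the cycle space and hence gives uniqueness). The paper's version is terser but relies on a citation that itself encapsulates essentially the same transportation argument. One small point worth stating explicitly in your write-up: the uniqueness of the transportation solution on a forest is what lets you transfer the nonnegativity of the given $s_{ji}$ to the leaf-peeled integral solution; you use this implicitly when concluding $s_{ji}\in\Z_{\ge 0}$.
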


\begin{proof}
  As above, we denote by $\Delta_{d-1}=\conv\smallSetOf{e_i}{i\in[d]} \subset \R^d$ the
  standard simplex.  By definition, each cell $\tau$ of the subdivision $\Sigma$ is of the
  form $\tau = \sum_{j=1}^n \Delta_{I_j}$ where $I_j \subseteq [d]$ and $\Delta_{I_j} =
  \conv \smallSetOf{ e_i }{ i \in I_j }$ is a face of $\Delta_{d-1}$.  Since we assumed
  the mixed subdivision $\Sigma$ to be fine this yields $\dim \tau = \sum_j \dim
  \Delta_{I_j} = \sum_j (|I_j|-1)$.

  Any 0-dimensional cell of $\Sigma$ is of the form $\{e_{i_1}\} + \cdots + \{e_{i_n}\}$,
  where $1 \leq i_j \leq d$.  Any such cell is a lattice point in $n\Delta_{d-1} \cap
  \Z^{d}$, and the collection of coarse types of such cells correspond to the set of
  compositions of $n$ into $d$ parts.  In order to show that all such lattice points arise
  as 0-dimensional cells, let $t \in n\Delta_{d-1} \cap \Z^{d}$ and let $\tau =
  \sum_{j=1}^n \Delta_{I_j}$ be the inclusion minimal mixed cell containing $t$. By
  \cite[Prop.~14.12]{post09} every lattice point of $\tau$ is of the form $\sum_{j=1}^n
  \{e_{i_j}\}$ with $i_j \in I_j$ for all $j$. However, as the mixed subdivision is fine,
  $\tau$ is combinatorially isomorphic to the product $\prod_{j=1}^n \Delta_{I_j}$ and
  hence every sum of vertices is a vertex. Therefore, $\tau$ is a vertex of the mixed
  subdivision $\Sigma$.
\end{proof}

\begin{figure}[htb]
  \includegraphics[width=.4\textwidth]{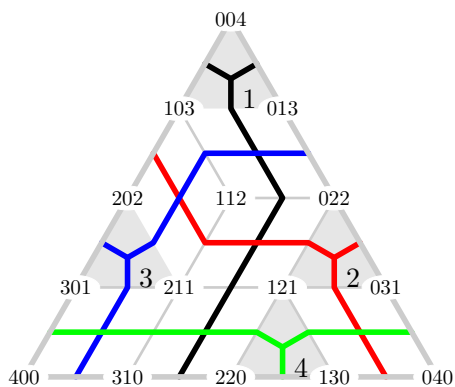}

  \caption{Mixed subdivision of $4\Delta_2$ corresponding to Example~\ref{exmp:BY:cells}.}
\end{figure}

\subsection{Resolutions supported by mixed subdivisions}

In this section we discuss our results regarding cellular resolutions in the context of mixed subdivisions of dilated simplices.  Although these results are more or less translations of the above via the
Cayley trick, we find it useful to make this transition explicit.  It seems that the
mixed subdivision picture allows for more natural statements whereas the tropical
convexity picture allows for more natural proofs.  We refer the reader back to
Definition~\ref{dfn:coarsetypemixed} for the definition of coarse type of a mixed cell.

\begin{cor}\label{cor:cellres}
  Let $\Sigma$ be any regular mixed subdivision of $n \Delta_{d-1}$.  Consider $\Sigma$ to
  be a labeled polytopal complex with each face $\sigma$ labeled by the least common
  multiple of the vertices that it contains.  Then for any field $\field$, the complex
  $\Sigma_{\A}$ supports a minimal cellular resolution of the coarse type ideal
  $\cI = \langle \x^{\ct(p)}: p \in \T^{d-1} \rangle$ in $\field[x_1,\dots,x_d]$.
\end{cor}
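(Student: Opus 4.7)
The plan is to obtain the cellular resolution on $\Sigma$ by transporting the cocellular resolution of Theorem~\ref{thm:cocellular} across the $\CD_\A \leftrightarrow \Sigma_\A$ duality. Given a regular mixed subdivision $\Sigma$ of $n\Delta_{d-1}$, the Cayley trick produces a regular subdivision of $\Delta_{n-1}\times\Delta_{d-1}$, which by the Develin--Sturmfels correspondence is dual to some $\CD_\A$ arising from a tropical hyperplane arrangement $\A$ (see Figure~\ref{fig:schema}); under this identification $\Sigma=\Sigma_\A$. Chasing the Cayley trick through the dimension count shows that the induced bijection sends a $k$-dimensional cell $H$ of $\CD_\A$ to a $(d-1-k)$-dimensional cell $\sigma_H$ of $\Sigma_\A$ and reverses inclusions.

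First I would match the two labelings. By Definition~\ref{dfn:coarsetypemixed} a vertex $\sigma$ of $\Sigma_\A$ corresponds to a composition of $n$ into $d$ parts, and by Proposition~\ref{prop:coarse:exponent} this composition is exactly the coarse type $\ct_\A(H_\sigma)$ of the dual maximal cell of $\CD_\A$. For an arbitrary face $\sigma\in\Sigma_\A$, the lcm label prescribed by the corollary is the componentwise maximum of the labels of the vertices of $\sigma$; by the dimension-reversing duality these vertices correspond precisely to the maximal cells $D$ of $\CD_\A$ with $H_\sigma\subset D$, so Proposition~\ref{prop:type_labeling} identifies the lcm label with $\ct_\A(H_\sigma)$. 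In particular, when $\sigma\subset\sigma'$ we have $H_{\sigma'}\subset H_\sigma$ and therefore $\ct_\A(H_\sigma)\le\ct_\A(H_{\sigma'})$, confirming that the lcm assignment is indeed a labeling in the cellular sense.

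To apply Lemma~\ref{lem:cellular_criteria} I would then fix $\b\in\N^d$ and analyze the subcomplex
\[
  \Sigma_{\le\b} \ = \ \SetOf{\sigma\in\Sigma_\A}{\ct_\A(H_\sigma)\le\b}.
\]
Under the duality, $\Sigma_{\le\b}$ corresponds to the up-set $\SetOf{H\in\CD_\A}{\ct_\A(H)\le\b}$, whose underlying space in $\T^{d-1}$ is the set $(\CD_\A,\ct)_{\le\b}$ shown to be $\max$-tropically convex, hence contractible, in Corollary~\ref{cor:downsets}. Minimality is handed to us by Proposition~\ref{prop:minimal}, which forbids adjacent cells of $\CD_\A$ from sharing a coarse type and therefore forbids adjacent cells of $\Sigma_\A$ from sharing an lcm label.

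The main obstacle is bridging the topology of the two pictures in the acyclicity step, since $\Sigma_{\le\b}$ sits inside $n\Delta_{d-1}$ while $(\CD_\A,\ct)_{\le\b}$ sits inside $\T^{d-1}$. I would handle this by the standard principle that for dual regular polyhedral complexes, the union of open cells in $\CD_\A$ indexed by an up-set of the face poset is homotopy equivalent to the subcomplex of $\Sigma_\A$ indexed by the same set of cells, both realizations agreeing with the order complex of the common poset via the barycentric subdivision. Combined with the label-matching above, this carries the cocellular acyclicity from Theorem~\ref{thm:cocellular} over to cellular acyclicity for $\Sigma_\A$ and completes the verification of Lemma~\ref{lem:cellular_criteria}.
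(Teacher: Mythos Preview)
Your proposal is correct and follows essentially the same route as the paper's own proof: pass through the Cayley trick and the Develin--Sturmfels duality to identify $\Sigma_\A$ with the dual of $\CD_\A$, match the lcm labeling on $\Sigma_\A$ with the coarse-type colabeling on $\CD_\A$ via Proposition~\ref{prop:type_labeling}, and then invoke Theorem~\ref{thm:cocellular}. The paper compresses the last step into a single citation of Theorem~\ref{thm:cocellular}, whereas you unpack it into Corollary~\ref{cor:downsets}, Lemma~\ref{lem:cellular_criteria}, and an explicit order-complex argument for transferring acyclicity across the duality; this extra care is welcome, since the paper leaves the ``cocellular on $\CD_\A$ $\Rightarrow$ cellular on $\Sigma_\A$'' passage entirely implicit.
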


\begin{proof}
  The fact that $\Sigma_{\A}$ is a labeled complex follows from
  Proposition~\ref{prop:type_labeling}.  As a poset $\Sigma_{\A}$ is isomorphic to the
  corresponding regular subdivision of $\Delta_{n-1}\times\Delta_{d-1}$ via the Cayley
  trick.  By \cite[Lemma~22]{DevStu04} this regular subdivision is dual to the cell
  decomposition $\CD_\A$ of $\T^{d-1}$.  In this way the labeling of $\Sigma_{\A}$ turns
  into the colabeling of $\boundedCD_\A$ by coarse types.  Theorem~\ref{thm:cocellular}
  now establishes the claim.
\end{proof}

It is now straightforward to derive the mixed subdivision result corresponding to
Corollary~\ref{cor:cocellular_generic}.

\begin{cor}\label{cor:cellres_generic}
  Let $\Sigma$ be any \emph{fine} mixed subdivision of $n \Delta_{d-1}$.  Then
  $\Sigma_{\A}$, as a labeled polyhedral complex, supports a minimal cellular resolution
  of $\langle x_1, \dots, x_d \rangle^n$.
\end{cor}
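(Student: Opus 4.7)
The plan is to obtain this as a direct specialization of Corollary~\ref{cor:cellres} to the sufficiently generic case, where the coarse type ideal collapses to $\ideal{x_1,\dots,x_d}^n$. First I would realize $\Sigma$ as $\Sigma_\A$ for some sufficiently generic arrangement: by the Cayley trick, regular mixed subdivisions of $n\Delta_{d-1}$ biject with regular subdivisions of $\Delta_{n-1}\times\Delta_{d-1}$, and under this correspondence fine mixed subdivisions correspond exactly to triangulations. Combined with \cite{DevStu04}, which identifies triangulations of $\Delta_{n-1}\times\Delta_{d-1}$ with tropically general arrangements, this gives $\Sigma=\Sigma_\A$ for a sufficiently generic $\A=\A(V)$.

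Next, I would apply Corollary~\ref{cor:cellres} to conclude that $\Sigma_\A$ already supports a minimal cellular resolution of the coarse type ideal $\cI$, so it suffices to show $\cI=\ideal{x_1,\dots,x_d}^n$ in this setting. By Corollary~\ref{cor:coarse_min_gen}, $\cI$ is generated by the monomials $\x^{\ct(C)}$ as $C$ ranges over the inclusion-maximal cells of $\CD_\A$; under the duality between $\CD_\A$ and the subdivision of $\Delta_{n-1}\times\Delta_{d-1}$, these cells correspond exactly to the vertices of $\Sigma_\A$. Since $\Sigma_\A$ is fine, Proposition~\ref{prop:latticepoints} identifies its vertices with the lattice points of $n\Delta_{d-1}$, whose coarse types are in bijection with the compositions of $n$ into $d$ parts, i.e.\ with the $\binom{n+d-1}{d-1}$ degree-$n$ monomials in $x_1,\dots,x_d$. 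These are precisely the minimal generators of $\ideal{x_1,\dots,x_d}^n$, so $\cI=\ideal{x_1,\dots,x_d}^n$.

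Minimality is immediate from Corollary~\ref{cor:cellres} and needs no further argument; the only real bookkeeping step is checking that the lcm-labeling of $\Sigma_\A$ by vertex labels corresponds, under the Develin--Sturmfels duality, to the coarse-type colabeling of $\CD_\A$ that was used in Theorem~\ref{thm:cocellular}, but this identification is already built into the proof of Corollary~\ref{cor:cellres}. I do not expect a genuine obstacle here: the content of the corollary is really just the observation that general position on the tropical side makes the coarse-type ideal attain its maximal possible generating set, namely all degree-$n$ monomials.
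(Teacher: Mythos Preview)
Your proposal is correct and essentially identical to the paper's proof, which simply cites Corollary~\ref{cor:cellres} and Proposition~\ref{prop:latticepoints}; you have just spelled out in more detail why these two results combine to give $\cI=\ideal{x_1,\dots,x_d}^n$. One small remark: the statement should really be read with an implicit regularity assumption (or equivalently, the notation $\Sigma_\A$ already signals that $\Sigma$ arises from an arrangement), since the non-regular case is explicitly left open in Section~\ref{sec:final}; your first paragraph correctly handles this by realizing $\Sigma$ via a generic $\A$.
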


\begin{proof}
  This follows from Corollary \ref{cor:cellres} and Proposition \ref{prop:latticepoints}.
\end{proof}

We note that mixed subdivisions of dilated simplices have been used to obtain cellular
resolutions in previous work.  In \cite{DocEng08} the authors study applications to
resolutions of edge ideals of graphs and hypergraphs.  In \cite{Sine08}, Sinefakopoulos
shows that the mixed subdivision of $n \Delta_{d-1}$ corresponding to the \emph{staircase}
triangulation of $\Delta_{n-1} \times \Delta_{d-1}$ supports a cellular resolution of
$\langle x_1, \dots, x_d \rangle^n$, although his construction is much less explicit.  We
discuss this example further in Section \ref{sec:examples}.

\begin{rem}
    It seems to be a challenging task to characterize which monomial ideals
    \mbox{$I
    \subset \field[x_1, \dots, x_d]$} arise as $\cI$ for some arrangement $\A$
    of tropical hyperplanes in $\T^{d-1}$.  Some necessary conditions are
    obvious, e.g., $I$ should be homogeneous of some degree $n$, and that
    $x_i^d$ should be contained in $I$ for all $i$.  In particular, this means
    that the coarse type ideal is necessarily Artinian.
\end{rem}

\subsection{Alexander duality of ideals and resolutions}

We have seen how the bounded subcomplexes of tropical hyperplane arrangements are related
to mixed subdivisions of dilated simplices in terms of a geometric duality.  This duality
extends to the algebraic level of our resolutions in the context of \emph{Alexander
  duality of resolutions}.  For this we will need the following notion of the Alexander
dual of a (not necessarily square-free) monomial ideal.

\begin{dfn}
  Suppose $I$ is a monomial ideal in the polynomial ring $\field[x_1, \dots, x_d]$ and let
  $\a \in \N^d$.  The \emph{Alexander dual of $I$ with respect to $\a$} is given by the
  intersection
  \[
  I^{[\a]} \ = \ \bigcap \SetOf{\m^{\a \backslash \b}}{{\bf x^b} \textrm{ is a minimal
      generator of $I$}} \, ,
  \]
  where $\a\backslash\b$ denotes the vector whose $i$-th coordinate is $a_i + 1 - b_i$ if
  $b_i \geq 1$, and is $0$ if $b_i = 0$.  Here we borrow the notation $\m^\a := \langle
  x_i^{a_i}:a_i \geq 1 \rangle$.
\end{dfn}

Note that if $I$ is a square-free monomial ideal (and hence the Stanley-Reisner ring of
some simplicial complex) and $\a = \vones$ is taken to be the all-ones vector, then this
notion recovers the more familiar notion of Alexander duality of simplicial complexes.
The main result concerning duality of resolutions, relevant for us, is the following
({\cite[Theorem~5.37]{MilStu05}}).

\begin{thm} \label{resduality} Suppose $I$ is a monomial ideal in degrees preceding some
  $\a \in {\N}^d$ and suppose $\res_\bullet^{\P}$ is a minimal cellular resolution of $S/(I +
  \m^{\a + \vones})$ such that all face labels on $\P$ precede $\a + \vones$.  Let $\Q$
  denote the labeled complex with the same underlying complex $\P$ but with labels
  $\overline \ct_F = \a + \vones - \ct_F$.  Then $\res_\bullet^{\Q_{\leq \a}}$ is a minimal cocellular resolution of $I^{[\a]}$.
  \end{thm}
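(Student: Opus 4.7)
The plan is to verify the cocellular criterion of Lemma~\ref{lem:cocellular_criteria} for the labeled polyhedral complex $\Q_{\leq\a}$. This amounts to checking three things: that $\overline{\ct}$ is a colabeling of $\Q_{\leq\a}$; that the relatively open subset $(\Q_{\leq\a})_{\leq\b}$ is $\field$-acyclic for every $\b\in\Z^d$; and that the ideal generated by the maximal-cell labels of $\Q_{\leq\a}$ equals $I^{[\a]}$.

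Both the colabeling and the minimality come essentially for free from the hypotheses on $\P$. A face $F$ of $\P$ lies in $\Q_{\leq\a}$ precisely when $\ct_F \geq \vones$, so $\Q_{\leq\a}$ consists of those cells of $\P$ whose label monomial is divisible by $x_1\cdots x_d$; in particular, any vertex of $\P$ labeled by a pure-power generator $x_i^{a_i+1}$ of $\m^{\a+\vones}$ is excluded, as one should expect since these generators do not contribute to $I^{[\a]}$. Because the lcm labeling on $\P$ satisfies $\ct_F \le \ct_G$ whenever $F \subset G$, the family $\Q_{\leq\a}$ is closed under taking supersets, and the required colabel inequality $\overline{\ct}_F \geq \overline{\ct}_G$ for $G \supsetneq F$ in $\Q_{\leq\a}$ translates to $\ct_F \leq \ct_G$, which is immediate. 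Minimality is inherited as well: two adjacent faces $F\subsetneq G$ with $\overline{\ct}_F = \overline{\ct}_G$ would force $\ct_F = \ct_G$, contradicting the minimality of $\res^\P_\bullet$.

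The heart of the argument is the acyclicity condition. By construction, $(\Q_{\leq\a})_{\leq\b}$ is the collection of relatively open cells $F$ of $\P$ satisfying $\ct_F \geq \max(\vones,\a+\vones-\b)$, and the idea is to relate its topology to that of the closed subcomplex $\P_{\leq\a+\vones-\b}$, whose $\field$-acyclicity is guaranteed by the fact that $\res^\P_\bullet$ resolves $S/(I+\m^{\a+\vones})$. The bridge is Alexander duality at the level of the polyhedral complex, which I would implement by applying the multigraded Matlis-duality functor $\operatorname{Hom}_\field(-,\field)$ to the fine-graded resolution: the degree-$\b$ piece of the cochain complex of $(\Q_{\leq\a})_{\leq\b}$ is identified, after a degree shift by $\a+\vones$, with the dual of the degree-$(\a+\vones-\b)$ piece of the cellular chain complex of $\P_{\leq\a+\vones-\b}$, so acyclicity transfers. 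Identifying the maximal-cell labels of $\Q_{\leq\a}$ with the minimal generators of $I^{[\a]} = \bigcap_{\x^\b}\m^{\a\setminus\b}$ is then a bookkeeping exercise under the label flip, since each minimal generator $\x^\b$ of $I$ corresponds to a top cell of $\Q_{\leq\a}$ carrying the monomial $\x^{\a+\vones-\b}$, and intersections of the form $\m^{\a\setminus\b}$ manifest as lcms of these monomials on the top cells.

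The main obstacle is the polyhedral Alexander-duality step: aligning the topology of a collection of relatively open cells with that of a closed subcomplex requires a careful multigraded Matlis-duality argument, which is precisely the content of Miller--Sturmfels~\cite[Theorem~5.37]{MilStu05} and is what one would ultimately invoke rather than reprove from scratch.
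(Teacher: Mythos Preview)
The paper does not prove this theorem at all: it is quoted verbatim as \cite[Theorem~5.37]{MilStu05} and used as a black box to derive Proposition~\ref{dualres}. There is therefore no ``paper's own proof'' to compare against.

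Your sketch is broadly on the right track as an outline of how Miller--Sturmfels prove their Theorem~5.37 (colabeling and minimality transfer trivially under $\ct_F\mapsto\a+\vones-\ct_F$; the acyclicity of $(\Q_{\leq\a})_{\leq\b}$ is deduced from that of $\P_{\leq\a+\vones-\b}$ via a Matlis-duality identification of the relevant graded pieces). However, your final paragraph is circular: you propose to establish the polyhedral Alexander-duality step by invoking \cite[Theorem~5.37]{MilStu05}, which is precisely the statement you are asked to prove. If you intend an independent proof, you must actually carry out the Matlis-duality identification of complexes---that is, exhibit the isomorphism between the cochain complex computing the degree-$\b$ strand of $\res^\bullet_{\Q_{\leq\a}}$ and the $\field$-dual of the chain complex computing the degree-$(\a+\vones-\b)$ strand of $\res^\P_\bullet$---rather than cite the theorem back to itself. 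Also, your identification of the maximal-cell labels with the generators of $I^{[\a]}$ is too quick: the top cells of $\Q_{\leq\a}$ are the \emph{vertices} of $\P$ with $\ct\geq\vones$, and one must argue that the monomials $\x^{\a+\vones-\ct_v}$ for these vertices generate exactly $I^{[\a]}$, which requires the characterization of $I^{[\a]}$ in terms of the complementary exponents of the socle of $S/(I+\m^{\a+\vones})$.
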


Applying this theorem we obtain the following dual resolution of the coarse cotype ideal.
For $\sigma$ a face of a mixed subdivision $\Sigma$, the \emph{coarse cotype} of $\sigma$
is defined to be $n \vones - \ct(\sigma)$, where $\ct(\sigma)$ is the coarse type of
$\sigma$ defined in Definition~\ref{dfn:coarsetypemixed}.

\begin{prop} \label{dualres} Given any arrangement $\A$ of $n$ tropical hyperplanes in
  $\T^{d-1}$, let $\overline \Sigma_{\A}$ denote the associated mixed subdivision of $n
  \Delta_{d-1}$ with labels given by coarse cotype.  Then $\overline \Sigma_{\A}$ supports
  a minimal cocellular resolution of the coarse cotype ideal $\ccI$ in $\field[x_1, \dots,
  x_d]$. Consequently the associated tropical complex $\boundedCD_{\A}$, with
  labels given by coarse cotype, supports a minimal \emph{cellular} resolution of $\ccI$.
\end{prop}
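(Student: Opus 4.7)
The plan is to derive both statements from Alexander duality of resolutions (Theorem~\ref{resduality}), applied to the minimal cellular resolution of the coarse type ideal $\cI$ on $\Sigma_\A$ furnished by Corollary~\ref{cor:cellres}. Two preliminary observations are required. First, $\cI$ is Artinian: for each $i\in[d]$, any point $p \in \T^{d-1}$ whose $i$-th coordinate is sufficiently large lies in $\bigcap_{j=1}^{n} S_i^{\max}(v_j)$, so this intersection is a non-empty unbounded maximal cell with coarse type $n e_i$; hence $x_i^n \in \cI$ and $\m^{n\vones} \subseteq \cI$. Second, every face label of $\Sigma_\A$ is componentwise bounded above by $n\vones$, since each coarse type is a vector of nonnegative integers summing to $n$.

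Taking $\a = n\vones$, the hypotheses of Theorem~\ref{resduality} are satisfied: the containment $\m^{(n+1)\vones} \subseteq \m^{n\vones} \subseteq \cI$ gives $\cI + \m^{(n+1)\vones} = \cI$, and face labels on $\Sigma_\A$ precede $\a + \vones = (n+1)\vones$. The theorem then yields a minimal cocellular resolution of the Alexander dual $\cI^{[n\vones]}$ supported on the subcomplex $\Q_{\le n\vones}$, where $\Q$ is $\Sigma_\A$ relabeled by $(n+1)\vones - \ct(\cdot)$.

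The key technical step is the identification
\[
  \cI^{[n\vones]} \ = \ (x_1 x_2 \cdots x_d) \cdot \ccI \, .
\]
This will be checked by computing the intersection of primary components $\m^{n\vones \setminus \ct(C)}$ over the maximal cells $C$ of $\CD_\A$ and verifying that its minimal generators $\x^{(n+1)\vones - \ct(v)}$ are in bijection, via the shift by $\vones$, with the monomials $\x^{\cct(v)}$ generating $\ccI$ for $v$ ranging over the vertices of $\boundedCD_\A$; here Proposition~\ref{prop:cotype_labeling} supplies these generators of $\ccI$ and Proposition~\ref{prop:bounded_coarse} ensures that the relevant vertices are exactly those cells with all coarse type coordinates at least $1$, so the shift by $\vones$ is well defined. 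Dividing through by $x_1 \cdots x_d$ then shifts the multigrading by $-\vones$, transforming the complementary labels $(n+1)\vones - \ct$ into $n\vones - \ct = \cct$; simultaneously the support $\Q_{\le n\vones}$ is recognized, via Proposition~\ref{prop:bounded_coarse} and the Cayley trick, as the portion of $\Sigma_\A$ dual to $\boundedCD_\A$. This produces the desired minimal cocellular resolution of $\ccI$ on $\overline{\Sigma}_\A$.

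The ``consequently'' clause then follows because the Cayley correspondence reverses inclusions between this subcomplex of $\Sigma_\A$ and the tropical complex $\boundedCD_\A$, so the cocellular resolution translates directly into a minimal cellular resolution of $\ccI$ on $\boundedCD_\A$ with coarse cotype labels. The principal obstacle will be the identification $\cI^{[n\vones]} = (x_1 \cdots x_d)\cdot \ccI$ in the general, possibly non-generic, setting, which requires carefully tracking the combinatorial interplay between the coarse types of maximal cells (which generate $\cI$) and the coarse cotypes of vertices of $\boundedCD_\A$ (which generate $\ccI$).
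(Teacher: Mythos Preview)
Your proposal is correct and follows essentially the same route as the paper: both apply Theorem~\ref{resduality} to the minimal cellular resolution of $\cI$ on $\Sigma_\A$ from Corollary~\ref{cor:cellres}, identify the relevant subcomplex $\Q_{\le \a}$ with the cells of $\Sigma_\A$ having $\ct(\sigma)\ge\vones$ (dual to $\boundedCD_\A$ via Proposition~\ref{prop:bounded_coarse}), and then compute the Alexander dual by locating the maximal monomials outside the type ideal.

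The one difference is your choice $\a = n\vones$ versus the paper's $\a = (n-1)\vones$. With the paper's choice one takes $I$ to be $\cI$ with the $d$ generators $x_i^n$ removed, so that $I+\m^{n\vones}=\cI$; then the complementary labels $\a+\vones-\ct = n\vones-\ct$ are already the coarse cotypes, and the dual $I^{[(n-1)\vones]}$ is $\ccI$ on the nose. Your choice produces labels shifted by $\vones$ and the dual $(x_1\cdots x_d)\cdot\ccI$, which you then have to divide back; since any $\c$ with $\x^\c\notin\cI$ already satisfies $\c\le(n-1)\vones$ (because $x_i^n\in\cI$), the two computations literally coincide after this shift. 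The paper's choice is thus a minor streamlining of yours, not a different argument.
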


\begin{proof}
We apply Theorem \ref{resduality} with $\a := (n-1)\vones$, and with $I$ defined to be the ideal generated by all monomials $f$ of $\cI$ with $f \leq \a$ (in other words, throw out all generators of the form $x_i^n$ for $1 \leq i \leq d$, all of which show up in $\cI$ regardless of the arrangement $\A$).  We then have from Corollary \ref{cor:cellres} that $\Sigma_{\A}$ supports a minimal cellular resolution of $\cI = I + \m^{\a + 1}$.  The conditions of Theorem \ref{resduality} are met and we conclude that $(\overline \Sigma_{\A})_{\leq \a}$ supports a minimal cocellular resolution of $I^{[\a]}$.

We next determine $I^{[\a]}$, the Alexander dual of $I$ with respect to $\a = (n-1)\vones$.  In \cite{MilStu05} it is shown that if $\b \leq \a$ then ${\bf x^b}$ lies outside $I$ if and only if ${\bf x^{a-b}}$ lies inside $I^{[\a]}$.  Hence to find a set of generators for $I^{[\a]}$ it suffices to determine the maximal monomials which lie outside $I$.  But these monomials correspond to the minimal cotypes that arise in the complex $\CD_{\A}$, and these are given by the collection of monomials $\overline \ct(x)$, for $x$ a 0-dimensional cell in $\CD_{\A}$.  Hence we conclude that $I^{[\a]} = \ccI$.

For the second part of the claim, we note that a face $\sigma \in \overline
\Sigma_{\A}$ with label $\b$ satisfies $\b \leq \a$ if and only if $n \vones -
\ct(\sigma) \leq (n-1) \vones$ for the coarse type label $\ct(\sigma)$ in
$\Sigma_{\A}$.  But this occurs exactly when $\ct(\sigma) \geq \vones$, which
happens if and only if the face $\sigma$ is not contained in the boundary of
$\Sigma_{\A}$.  Hence the cocellular resolution of $I^{[\a]}$ that we obtain,
supported on $(\overline \Sigma_{\A})_{\leq \a}$, is given by the relative
cocellular complex of $(\Sigma_{\A}, \partial \Sigma_{\A})$.  By duality, the
relative cochain complex $C^*(\Sigma_{\A}, \partial \Sigma_{\A})$ is
isomorphic to the chain complex $C_*(\boundedCD_{\A})$ of the tropical complex $\boundedCD_{\A}$ (the bounded subcomplex of the decomposition of ${\T}^{d-1}$ induced by $\A$).  Hence $\boundedCD_{\A}$, with labels given by coarse \emph{cotype}, supports a minimal \emph{cellular} resolution of the ideal $I^{[\a]} = \ccI$.
\end{proof}

\section{Examples}\label{sec:examples}

\noindent
In this section we discuss some examples of our constructions and results.  We begin with
a family of generic arrangements which correspond to well-known objects in the theory of
subdivisions (staircase triangulations) and tropical convexity (cyclic polytopes).  In the
context of the associated coarse type ideal we recover a construction of a cellular
resolution first described by Sinefakopoulos in \cite{Sine08}.  We then discuss a family
of non-generic examples which arise from tropical hypersimplices; in this we are able to
explicitly describe the generators of the coarse type ideal.

\subsection{The staircase triangulation}
One particularly well-behaved fine mixed subdivision of $n \Delta_{d-1}$ comes from
applying the Cayley trick (discussed in Section \ref{sec:Cayley} above) to the so-called
\emph{staircase} triangulation of $\Delta_{n-1} \times \Delta_{d-1}$.  The origins of the
staircase triangulation date back at least to work of Eilenberg and Zilber in
\cite{EZ53}, where the authors introduce algebraic operations which they call `shuffles' to
study the homology groups of products of spaces.  We wish to recall the construction here
in language suitable for our purposes.

As above, we use $\{e_1, \dots, e_d\}$ to denote the vertices of the simplex
$\Delta_{d-1}$ and consider fine mixed cells of the following kind.  Given a sequence of
integers $(b_1, b_2, \dots, b_{n+1})$ satisfying $1 = b_1 \leq b_2 \leq \cdots \leq b_n
\leq b_{n+1} = d$ we let $B_i := \{e_{b_i},e_{b_i+1},\dots,e_{b_{i+1}}\}$ for $1 \leq i
\leq n$, and use $(b_1,b_2,\dots,b_{n+1})$ to denote the corresponding (fine) mixed cell
$B_1 + B_2 + \cdots + B_n$ of $n \Delta_{d-1}$.  Note that each such sequence can be
thought of as a `staircase' of length $n$ and height $d$, where at the $i$-th step one
should climb a step of height $b_{i+1} - b_i$.

We claim that the collection of fine mixed cells corresponding to
$(b_1,b_2,\dots,b_{n+1})$ for $ 1
= b_1 \leq b_2 \leq \cdots \leq b_{n} \leq b_{n+1} = d$ forms a fine mixed
subdivision of the complex $n \Delta_{d-1}$.  To see this, we once again
employ the Cayley trick and consider the (proposed) triangulation of the
product $\Delta_{n-1} \times \Delta_{d-1}$.  As discussed above, the vertices
of $\Delta_{n-1} \times \Delta_{d-1}$ are pairs $(e_i, e_j)$, where $1 \leq i
\leq n$ and $1 \leq j \leq d$.  A simplex in any triangulation of
$\Delta_{n-1} \times \Delta_{d-1}$ corresponds to a tree in the complete
bipartite graph $K_{n,d}$, where $(v_i, w_j)$ is an edge of the graph whenever
$(e_i, e_j)$ is a vertex of the simplex.  The corresponding mixed cell $B_1 +
\cdots + B_n$ of $n \Delta_{d-1}$ has as the $i$-th summand the face
$\{e_j:w_j \in N(v_i)\}$, given by all vertices that are adjacent to $v_i$.
Note that the coarse type of the mixed cell is given by the degree sequence of
the $d$ vertices in the second vertex set partition.

Working with $\Delta_{n-1} \times \Delta_{d-1}$ has the advantage that one can readily
verify if a collection of simplices does, in fact, give a triangulation of the space, in
terms of the corresponding subgraphs.  We omit the details here, but point out that the
staircase triangulation also arises from the so-called `cyclic arrangement' of $n$
tropical hyperplanes in $\T^{d-1}$ as discussed in \cite{BlockYu06}.  We call the
associated fine mixed subdivision of $n \Delta_{d-1}$ the \emph{staircase mixed
  subdivision}.  As with any fine mixed subdivision, the 0-dimensional cells of the
staircase mixed subdivision of $n \Delta_{n-1}$ are labeled by the monomial generators of
$\langle x_1, \dots, x_d \rangle ^n$, and from Corollary \ref{cor:cellres_generic} we know
that this labeled complex supports a minimal cellular resolution of $\langle x_1, \dots,
x_d \rangle ^n$.  In fact, the case of $n=d=3$ is hinted at in
\cite[Example~2.20]{MilStu05}.
\begin{rem}
  In \cite{Sine08}, Sinefakopoulos constructs a labeled polyhedral complex which he calls
  $P_n(x_1, \dots, x_d)$, and shows that it supports the minimal resolution of $\langle x_1,
  \dots, x_d \rangle ^n$.  The complex $P_n(x_1, \dots, x_d)$ is constructed inductively,
  with each $P_n(x_{k+1}, \dots, x_d)$ a subcomplex of $P_n(x_k, \dots, x_d)$ for all $k <
  d$.  In \cite{Sine08} it is shown that $P_n(x_1, \dots, x_d)$ can in fact be realized as
  a subdivision of the dilated simplex $n \Delta_{d-1}$, and here we claim that this
  complex is isomorphic (as a labeled complex) to the staircase mixed subdivision.

  Mimicking the construction of $P_n(x_1, \dots, x_d)$ from \cite{Sine08}, we proceed by
  induction on $n$.  If $n=1$, both $P_1(x_1, \dots, x_d)$ and the staircase mixed
  subdivision correspond to the standard $(d-1)$-dimensional simplex with vertex labels
  $\{x_1, \dots, x_d\}$, denoted $\Delta_{d-1}(x_1, \dots, x_d)$.  We assume that for $n
  \geq 1$, the complex $P_n(x_1, \dots, x_d)$ is isomorphic as a labeled complex to the
  staircase subdivision of $n \Delta_{d-1}$, with this isomorphism restricting to an
  isomorphism on each $P_n(x_k, \dots, x_d)$.  In \cite{Sine08} the inductive step of the
  construction is obtained as
  \[
  P_{n+1}(x_1, \dots, x_d) \ := \ C_1 \cup \cdots \cup C_d \, ,
  \]
  where each $C_k$ is defined as
  \[C_k \ := \ \Delta_{k-1}(x_1, \dots, x_k) \times P_n(x_k, \dots x_d) \, .
  \]

  For any $k$, we claim that the complex $C_k$ corresponds to the complex composed of the
  set of fine mixed cells $(b_1,b_2, \dots, b_{n+1})$ with $b_2 = k$.  This would
  establish the desired isomorphism since the set of fine mixed cells in the staircase
  mixed subdivision correspond to the all such sequences with $1 \leq b_2 \leq d$.  To
  prove the claim, we note that for any labeled complex $P$ the product $\Delta_{k-1}(x_1,
  \dots, x_k) \times P$ is isomorphic to $\Delta_{k-1}(x_1, \dots, x_k) + P$, assuming
  that these complexes lie in affinely independent subspaces.  Furthermore, the vertices
  of the product complex are labeled by sums of vertices from each summand.  Now, by
  induction we have that each $P_n(x_k, \dots, x_d)$ is isomorphic to the corresponding
  subcomplex of the staircase mixed subdivision of $n \Delta_{d-1}$.  In particular, the
  set of mixed cells correspond to the set of sequences $(b_1, b_2, \dots, b_{n+1})$ with
  $k = b_1 \leq b_2 \cdots \leq b_n \leq b_{n+1} = d$.  The facets of $C_k$ are obtained
  by taking the Minkowski sum of the simplex $\Delta_{k-1}(x_1, \dots, x_k)$ with these
  mixed cells, and hence $C_k$ is isomorphic to the complex composed of all fine mixed
  cells $(b_1,b_2, \dots, b_{n+1})$ with $b_2 = k$.
\end{rem}

Hence our constructions recover the result from \cite{Sine08} (with a more explicit
description of the underlying polyhedral complex).  Once again, we wish to emphasize that
from Corollary \ref{cor:cellres_generic} we know that in fact \emph{any} (regular) fine
mixed subdivision of $n \Delta_{d-1}$ supports a minimal cellular resolution of the
homogeneous ideal $\langle x_1, \dots, x_d \rangle^n$.

\subsection{Tropical hypersimplices}

The hypersimplex $\Delta(k,n)$ is an ordinary convex polytope which, e.g.,
naturally turns up in the study of tropical Grassmannians of tropical $k$-planes in
$\T^{n-1}$.  Its tropical counterpart, the \emph{tropical hypersimplex}
$\Delta^{\trop}(k,n)$ is defined as the tropical convex hull of all $0/1$-vectors of
length $n$ with exactly $k$ zeros.  Notice that we have the strict inclusions
\[
\Delta^{\trop}(1,n) \ \supsetneq \ \Delta^{\trop}(2,n) \ \supsetneq \ \cdots \
\supsetneq \Delta^{\trop}(n-1,n)
\]
of subsets of $\T^{n-1}$.  We want to determine the coarse type ideal corresponding to the
configuration of $\tbinom{n}{k}$ points in $\T^{n-1}$ given by the tropical vertices of
$\Delta^{\trop}(k,n)$.  The $n$ generators of $\Delta^{\trop}(1,n)$ are in general
position.  Hence the coarse type ideal is the homogeneous maximal ideal $\langle
x_1,x_2,\dots,x_n\rangle$ in this case.  The second tropical hypersimplex
$\Delta^{\trop}(2,n)$ is contained in the $\min$-tropical hyperplane with the origin as
its apex.  In particular, $\Delta^{\trop}(2,n)$, seen as a polytopal complex in
$\R^{n-1}=\T^{n-1}$, is of dimension $n-2$.  This implies that all maximal cells in the
type decomposition of $\T^{n-1}$ induced by the tropical vertices of $\Delta^{\trop}(k,n)$
are unbounded if $2\le k<n$.  Equivalently, the coarse type ideal only has minimal
generators with the property that at least one variable is missing (that is, the exponent of
this variable is zero).  The symmetric group $\Sym(n)$ acts on the set of tropical
vertices of $\Delta^{\trop}(k,n)$, and hence it also acts on the maximal cells of the type
decomposition.

\begin{prop}\label{prop:hypersimplex}
  Let $2\le k <n$. Then up to the $\Sym(n)$ action the coarse types of the maximal cells
  in the type decomposition of $\T^{n-1}$ induced by the $\tbinom{n}{k}$ tropical vertices
  of $\Delta^{\trop}(k,n)$ are given by
  \[
  (\binom{n-\alpha}{k}+\binom{n-1}{k-1},\ \binom{n-2}{k-1},\dots,\binom{n-\alpha}{k-1},\ \underbrace{0,\dots,0}_{n-\alpha}) \,
  \]
  where $1 \le \alpha \le n-k+1$.
\end{prop}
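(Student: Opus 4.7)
The plan is to compute the coarse type of a generic point $p \in \T^{n-1}$ lying in a maximal cell of the induced decomposition, by determining for each $k$-subset $S \subseteq [n]$ the unique sector index $\tau_S \in [n]$ such that $p \in S_{\tau_S}^{\max}(v_S)$. The symmetric group $\Sym(n)$ acts simultaneously on the coordinates of $\T^{n-1}$ and on the set of $k$-subsets indexing the apices, and this action sends the arrangement to itself. Hence up to this $\Sym(n)$-action we may assume $p_1 > p_2 > \cdots > p_n$.

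The apex $v_S \in \{0,1\}^n$ has $(v_S)_j = 0$ iff $j \in S$. From the definition of the $k$-th sector, $\tau_S$ is the unique $j$ maximizing $p_j - (v_S)_j$. Since $(v_S)_j \in \{0,1\}$, this maximum is attained either at $j = i_S := \min S$ (with value $p_{i_S}$) or, if $1 \notin S$, at $j = 1$ (with value $p_1 - 1$). Consequently, $\tau_S = 1$ whenever $1 \in S$, while for $1 \notin S$ we have $\tau_S = i_S$ iff $p_{i_S} > p_1 - 1$. Let $\alpha$ be the largest index with $p_1 - p_\alpha < 1$ (so $\alpha \geq 1$; by genericity no equality occurs). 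Then for $1 \notin S$ we have $\tau_S = i_S$ when $i_S \leq \alpha$, and $\tau_S = 1$ when $i_S > \alpha$.

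Counting now yields the coarse type $(t_1, \ldots, t_n)$: the count $t_1$ is the sum of the number of $S$ with $1 \in S$ (giving $\binom{n-1}{k-1}$) and the number of $S$ with $1 \notin S$ and $i_S > \alpha$ (i.e.\ $k$-subsets of $\{\alpha+1, \ldots, n\}$, giving $\binom{n-\alpha}{k}$); for $2 \leq j \leq \alpha$, $t_j$ counts $k$-subsets with $\min S = j$, which is $\binom{n-j}{k-1}$; and $t_j = 0$ for $j > \alpha$. The hockey-stick identity
\[
\binom{n-\alpha}{k} + \sum_{j=1}^{\alpha} \binom{n-j}{k-1} \ = \ \binom{n}{k}
\]
then confirms that the coordinates sum to $\binom{n}{k}$, and the formula of the proposition is reproduced.

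Finally, the effective range of the parameter $\alpha$ is $\{1, 2, \ldots, n-k+1\}$: for $\alpha > n-k+1$, both $\binom{n-\alpha}{k}$ and the binomials $\binom{n-j}{k-1}$ with $j > n-k+1$ vanish, so the resulting coarse type coincides with that for $\alpha = n-k+1$; conversely, every $\alpha$ in the stated range is achieved by an appropriate choice of gaps between consecutive coordinates of $p$. The main delicate point is this capping at $\alpha = n-k+1$ and verifying that the different values of $\alpha$ yield genuinely distinct $\Sym(n)$-orbits of coarse types (which follows at once from the number of nonzero coordinates), but both are routine combinatorial checks.
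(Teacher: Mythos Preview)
Your argument is correct and more direct than what the paper provides. The paper does not actually prove this proposition: it attributes the result to Kulas, defers the full proof to a forthcoming reference, and only sketches an approach via the tropical rank of the vertex matrix together with a geometrically defined ``class'' parameter $\alpha$ (one plus the maximal dimension of a bounded face of a given maximal cell), plus the unproved assertion that $\Sym(n)$ acts transitively on the maximal cells of each class.

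Your approach bypasses this machinery entirely. After sorting the coordinates of a generic point by the $\Sym(n)$-action, you determine the sector index $\tau_S$ for every $k$-subset $S$ explicitly and just count. This is elementary and self-contained, and it actually shows more: the \emph{fine} type of a point with $p_1>\cdots>p_n$ already depends only on your parameter $\alpha$, which is why the capping at $\alpha=n-k+1$ works (for $\alpha\ge n-k+1$ one has $i_S\le n-k+1\le\alpha$ for every $k$-subset $S$, so $\tau_S=i_S$ in all cases). Your $\alpha$ necessarily agrees with the paper's class parameter, since both index the same list of $\Sym(n)$-orbits of coarse types, but you never need to establish that identification. One small point worth making explicit is that a maximal cell, being open of full dimension $n-1$, always contains a point with pairwise distinct coordinates, so the strict-ordering reduction after applying a permutation is legitimate.
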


This result has been obtained by Katja Kulas, and a complete proof will appear in
\cite{Kulas}.  Here we only sketch the argument.  Let $\A$ denote the arrangement of
$\max$-tropical hyperplanes induced by the tropical vertices of $\Delta^{\trop}(k,n)$.
The maximal dimension of a bounded cell or, equivalently, the tropical rank of the matrix
whose columns are the tropical vertices of $\Delta^{\trop}(k,n)$, equals $n-k$
\cite[Proposition~7.2]{DevelinSantosSturmfels05}. All cells in the arrangement $\A$,
bounded or not, are convex polyhedra in $\R^{\tbinom{n}{k}-1}$ which are \emph{pointed},
that is, they do not contain any affine line.  This implies that each cell must contain a
bounded cell as a face.  Let $C$ be some maximal cell in the arrangement $\A$.  From now
on we assume that $k>1$ whence $C$ is necessarily unbounded.  Let $\alpha-1$ be the
maximal dimension of a bounded cell in the boundary of $C$; and we call $\alpha$ the
\emph{class} of $C$.  From the bound on the tropical rank it follows that $1 \le \alpha
\le n-k+1$.  All these values for $\alpha$ actually occur.  It turns out that $\Sym(n)$
acts transitively on the set of maximal cells of class $\alpha$.  The coarse type of one
representative of each class is given in Proposition~\ref{prop:hypersimplex}.  Due to
Corollary~\ref{cor:coarse_min_gen} this yields generators for the corresponding coarse
type ideal.

\section{Face counting and incidence structure\\
  of the tropical complex}\label{sec:facecounting}

\noindent
In Section \ref{sec:resolutions} we saw how the polyhedral complexes $\CD_\A$ and
$\boundedCD_\A$ associated to an arrangement $\A$ gave rise to resolutions of the coarse
type ideal $\cI$ and the fine cotype ideal $\fcI$, respectively.  The \emph{minimality} of
our resolution also leads to some important implications regarding the combinatorics of
$\CD_\A$ and $\boundedCD_\A$ themselves.  In this section we discuss face numbers of
tropical complexes, as well as an algorithm for determining the facial structure
of $\boundedCD_\A$ given the arrangement $\A = \A(V)$.  The latter generalizes a result of
\cite{BlockYu06}, where a similar algorithm for the case of sufficiently generic arrangements was provided.

\subsection{Counting faces}

As a first application we point out that the $f$-vector of $\CD_\A$ can be determined from
the $\Z$-graded (`coarse') Betti numbers of $\cI$.  We noted in
Proposition~\ref{prop:bounded_coarse} that from the coarse type it is possible to
distinguish bounded from unbounded cells in $\CD_{\A}$.  Thus, we can also recover the
numerical behavior of the bounded complex $\boundedCD_{\A}$.

\begin{cor}\label{cor:face_numbers}
  Let $\A = \A(V)$ be a tropical hyperplane arrangement in $\T^{d-1}$ and let $\cI$ be its
  coarse type ideal. Then the number of cells in $\CD_{\A}$ of dimension $k$ is \[
  f_{k}(\CD_{\A}) \ =\ \beta_{d-1-k}(\cI) \ =\ \sum_{\b \in \Z^d} \beta_{d-1-k,\b}(\cI) \,
  .  \] The number of bounded $k$-cells in $\CD_{\A}$ is given by as the sum of Betti
  numbers $\beta_{d-1-k,\b}(\cI)$ for which $\b > 0$.
\end{cor}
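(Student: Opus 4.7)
The plan is to apply Theorem~\ref{thm:cocellular} directly: the labeled complex $(\CD_{\A}, \ct)$ supports a minimal cocellular resolution of the coarse type ideal $\cI$. In any such cocellular resolution the free module at homological position corresponding to $k$-dimensional cells decomposes as $\bigoplus_{\dim C = k} S(-\ct(C))$, one summand per $k$-cell. Because $\CD_\A$ is a decomposition of $\T^{d-1} \cong \R^{d-1}$, its maximal cells have dimension $d-1$ and furnish the generators of $\cI$ (in homological degree $0$ for the resolution of $\cI$, or equivalently degree $1$ for the resolution of $S/\cI$). Consequently, cells of dimension $k$ sit in homological position $d-1-k$.

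The key step is to turn this cell-by-position count into an equality of Betti numbers. Since the resolution is minimal, there are no cancellations: the contribution of each cell $C$ of dimension $k$ survives as one generator of the corresponding free module in multi-degree $\ct(C)$. Hence the fine $\Z^d$-graded Betti number $\beta_{d-1-k,\b}(\cI)$ equals the number of $k$-cells $C \in \CD_\A$ with $\ct(C) = \b$. Summing over all $\b \in \Z^d$ recovers $f_k(\CD_\A) = \beta_{d-1-k}(\cI)$.

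For the second claim, I invoke Proposition~\ref{prop:bounded_coarse}, which asserts that a cell $C \in \CD_\A$ is bounded if and only if every coordinate of $\ct(C)$ is strictly positive. Restricting the fine-graded sum above to those multi-degrees $\b$ with $\b > 0$ (componentwise) therefore counts precisely the bounded $k$-cells, yielding $\sum_{\b > 0} \beta_{d-1-k,\b}(\cI)$ as claimed.

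The only real subtlety is bookkeeping with the indexing convention: one must verify that, in the cocellular resolution built from $\CD_\A$, a $k$-dimensional cell indeed contributes at homological position $d-1-k$ (rather than, say, $k$ or $k+1$). This is forced by the fact that $\CD_\A$ has top dimension $d-1$ and the cocellular construction of Section~\ref{subsec:resolutions} places the generators (i.e., the facets carrying the minimal generators of $\cI$, which are the Betti $\beta_0$ data) at the top. Everything else is a direct consequence of minimality together with Propositions~\ref{prop:type_labeling} and~\ref{prop:bounded_coarse}, so no further work is required.
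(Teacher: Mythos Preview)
Your argument is correct and is precisely the reasoning the paper intends: the corollary is stated without an explicit proof and is treated as an immediate consequence of Theorem~\ref{thm:cocellular} (minimality of the cocellular resolution supported on $\CD_\A$) together with Proposition~\ref{prop:bounded_coarse} for the bounded-cell count. Your index bookkeeping---that maximal $(d{-}1)$-cells contribute the generators in homological degree~$0$, so $k$-cells sit in degree $d-1-k$---matches the paper's one-line justification that the shift ``is due to the fact that $\CD_{\A}$ supports a \emph{co}cellular resolution.''
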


Note that for the $k$-cells we need to consider the $(d{-}1{-}k)$-th Betti numbers.  This
is due to the fact that $\CD_{\A}$ supports a \emph{co}cellular resolution.

Furthermore, we can use the \emph{uniqueness} of minimal resolutions to derive further
results regarding face numbers of arrangements.  For this, suppose $\A$ is a sufficiently
generic arrangement of $n$ tropical hyperplanes in $\T^{d-1}$, and let $\CD_{\A}$ denote
the induced polyhedral subdivision of $\T^{d-1}$ determined by type.  In Corollary
\ref{cor:cocellular_generic} we saw that $\CD_{\A}$, with labels given by coarse type, supports a minimal cocellular
resolution of the ideal $\ideal{x_1,\dots,x_d}^n$.  Any two such resolutions are
isomorphic as chain complexes, and in particular the finely graded Betti numbers
$\beta_{i,\sigma}$ do not depend on the resolution.  By construction, $\beta_{i,\sigma}$
is precisely the number of cells in $\CD_{\A}$ with monomial label $\sigma$.  But the
monomial labels are given by the coarse types, and hence we obtain the following.

\begin{cor} \label{cor:coarsetypes_inv}
    Let $\A$ be a sufficiently generic arrangement of $n$ hyperplanes in $\T^{d-1}$. For
    every $0 \le i \le d-1$ the collection of coarse types $\ct(C_T)$ for
    $\dim C_T = i$, counted with multiplicities, is independent of the
    arrangement.
\end{cor}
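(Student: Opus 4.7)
The plan is to invoke the uniqueness of minimal free resolutions, applied to the fixed ideal $\ideal{x_1,\dots,x_d}^n$. By Corollary~\ref{cor:cocellular_generic}, for every sufficiently generic arrangement $\A$ of $n$ hyperplanes in $\T^{d-1}$, the labeled complex $\CD_\A$ (with coarse type labels) supports a minimal cocellular resolution of the ideal $\ideal{x_1,\dots,x_d}^n$. Since this ideal does not depend on $\A$, and minimal $\Z^d$-graded free resolutions are unique up to isomorphism of chain complexes, the finely graded Betti numbers $\beta_{j,\b}(\ideal{x_1,\dots,x_d}^n)$ are intrinsic invariants of the ideal.

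The central step is to read these Betti numbers off the resolution supported by $\CD_\A$. Recall from Section~\ref{subsec:resolutions} that in a cocellular resolution $\res^\bullet_{\CD_\A}$, each cell $C$ of appropriate dimension contributes a rank-one summand $S(-\ct(C))$ to the relevant free module. Because the resolution is minimal (by Proposition~\ref{prop:minimal}, distinct comparable cells of $\CD_\A$ carry distinct coarse types), no summands cancel, so the rank-one contributions are in bijection with cells. Hence for each fixed homological degree $j$ and each multidegree $\b \in \N^d$, the number of cells $C \in \CD_\A$ of the corresponding dimension $i$ with $\ct(C) = \b$ equals $\beta_{j,\b}(\ideal{x_1,\dots,x_d}^n)$.

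Since the right-hand side is independent of $\A$, I conclude that for each $i$ with $0 \le i \le d-1$ and each $\b \in \N^d$, the number of $i$-dimensional cells of $\CD_\A$ having coarse type $\b$ is a function only of $n$, $d$, $i$, and $\b$. Equivalently, the multiset $\bigSetOf{\ct(C)}{C \in \CD_\A,\ \dim C = i}$ is the same for every sufficiently generic $\A$, which is the claim.

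The main obstacle, such as it is, is purely bookkeeping: one must be careful to match the dimension of cells in $\CD_\A$ with the homological degree in the cocellular resolution (this accounts for the Alexander-dual-like shift already visible in Corollary~\ref{cor:face_numbers}), and one must confirm that minimality yields a strict bijection between multidegree-$\b$ basis elements of the free modules and cells of that coarse type. Both points reduce to citing Proposition~\ref{prop:minimal} together with Lemma~\ref{lem:cocellular_criteria}, so no genuine new difficulty arises beyond what is already established in the preceding sections.
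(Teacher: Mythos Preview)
Your argument is correct and is essentially identical to the paper's: both use Corollary~\ref{cor:cocellular_generic} to realize $\CD_\A$ as a minimal cocellular resolution of the fixed ideal $\ideal{x_1,\dots,x_d}^n$, then invoke uniqueness of minimal $\Z^d$-graded resolutions to conclude that the finely graded Betti numbers---which count cells of each coarse type in each dimension---are independent of $\A$. Your added remarks about the dimension/homological-degree bookkeeping and the role of Proposition~\ref{prop:minimal} are accurate but go slightly beyond what the paper spells out.
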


Putting the above result in perspective with the second statement of
Corollary~\ref{cor:face_numbers}, this proves the following result without appealing to
the equidecomposability of the product of simplices.

\begin{cor}\label{cor:generic_f_vector}
    The number of cells of $\CD_{\A}$ for a tropical hyperplane arrangement
    $\A$ in general position is independent of the choice of hyperplanes.
    More precisely, the number of $k$-dimensional cells induced by the
    arrangement of $n$ tropical hyperplanes in $\T^{d-1}$ equals
    \[
        f_k(\CD_A) \ = \ \sum_{\ell = 0}^k
            \binom{n + d - 2 - \ell}{n - 1}
            \binom{d - 1 - \ell}{d - 1 - k}
         \, .
    \]
\end{cor}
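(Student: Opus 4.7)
The plan is to combine the two corollaries immediately preceding the statement and then read off the right-hand side from the known minimal resolution of the power of the maximal ideal.

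First I would invoke Corollary~\ref{cor:face_numbers} to translate the face count into a Betti number: since $\CD_\A$ supports a minimal cocellular resolution of $\cI$, one has $f_k(\CD_\A) = \beta_{d-1-k}(\cI)$. For a sufficiently generic arrangement, Corollary~\ref{cor:cocellular_generic} identifies $\cI$ with the power $\m^n = \langle x_1,\dots,x_d\rangle^n$ of the homogeneous maximal ideal. In particular, the total Betti numbers on the right-hand side of the displayed equation in Corollary~\ref{cor:face_numbers} do not depend on $\A$, so it suffices to evaluate $\beta_i(\m^n)$ for a \emph{single} model resolution, and this reduces the claim to a computation in commutative algebra.

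Next I would compute these Betti numbers from the Eliahou--Kervaire resolution of $\m^n$ (which is a canonical stable ideal; see \cite[\S2.3]{MilStu05}, as already cited right after Corollary~\ref{cor:cocellular_generic}). The Eliahou--Kervaire formula gives
\[
\beta_i(\m^n) \ = \ \sum_{u \in G(\m^n)} \binom{\max(u)-1}{i}\,,
\]
where $G(\m^n)$ is the set of minimal monomial generators and $\max(u)$ is the largest index of a variable dividing $u$. The number of degree-$n$ monomials in $x_1,\dots,x_d$ with largest-index variable $x_j$ equals the number of compositions of $n-1$ into $j$ nonnegative parts, namely $\binom{n+j-2}{j-1}$. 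Consequently
\[
\beta_i(\m^n) \ = \ \sum_{j=1}^{d} \binom{n+j-2}{j-1}\binom{j-1}{i}\,.
\]

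Finally I would substitute $i = d-1-k$ and reindex by $\ell = d-j$ to transform the sum into
\[
f_k(\CD_\A) \ = \ \sum_{\ell=0}^{d-1} \binom{n+d-2-\ell}{n-1}\binom{d-1-\ell}{d-1-k}\,,
\]
noting that the second binomial vanishes for $\ell>k$, so the summation range truncates to $0\le\ell\le k$ as stated. There is no real obstacle here; the only nontrivial step is the invocation of the Eliahou--Kervaire formula, and the remainder is bookkeeping. (Alternatively one could derive the same identity by directly resolving $\m^n$ via the iterated mapping cone on $x_d$, or by reading Betti numbers off the staircase mixed subdivision of Section~\ref{sec:examples} via Corollary~\ref{cor:cellres_generic}; both give the same tally by the uniqueness of minimal resolutions invoked just before the corollary.)
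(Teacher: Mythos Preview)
Your proof is correct and essentially identical to the paper's: both invoke Corollaries~\ref{cor:face_numbers} and~\ref{cor:cocellular_generic} to reduce to computing $\beta_i(\m^n)$, then use the Eliahou--Kervaire/Borel-fixed Betti number formula $\beta_i(\m^n)=\sum_{u\in G(\m^n)}\binom{\max(u)-1}{i}$ (the paper cites \cite[Thm.~2.18]{MilStu05}, which is the same result in \S2.3), count generators by their maximal index, and substitute $i=d-1-k$. Your explicit reindexing $\ell=d-j$ makes transparent a step the paper leaves to the reader.
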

\begin{proof}
  By Corollaries~\ref{cor:face_numbers} and~\ref{cor:cocellular_generic} we have
  $f_i(\CD_{\A}) = \beta_{d-1-i}(\ideal{x_1,\dots,x_n}^n)$.  The Betti numbers of the
  power of the homogeneous maximal ideal are well-known.  For example, they can be
  determined as follows.

  The ideal $\m^n = \ideal{x_1,\dots,x_d}^n$ is \emph{strongly stable}, that is,
  $\frac{x_i}{x_j} \x^\b \in \m^n$ for every $\x^\b$ monomial divisible by $x_j$ and $i <
  j$.  In particular, $\m^n$ is Borel-fixed, and hence the Betti numbers are given
  by~\cite[Thm.~2.18]{MilStu05}, and we have
  \[
  \beta_i(\m^n) \ = \ \sum_{\a \in \N^d, |a| = n} \binom{ \max(\x^\a) - 1}{i}
  \]
  where $\max(\x^\a) = \max \{ i : \a_i > 0 \}$.  Now if $\max(\x^\a) =
  \ell$, then
  \[
  \a \ = \ (a_1,a_2,\dots,a_\ell + 1, 0,\dots,0)
  \]
  with $a_1,\dots,a_\ell \ge 0$ and $\sum_i a_i = n-1$. Hence, the number of generators
  $\x^\a$ with $\max(\x^a) = \ell$ is the number of monomials in $\ell$ variables of total
  degree $n-1$. This yields
  \[
  \beta_i(\m^n) \ = \ \sum_{\ell = 1}^d \binom{n-2+\ell}{n-1}\binom{\ell - 1}{i}
  \]
  Now substitute $i$ in the formula with $d-1-k$.
\end{proof}

In the context of mixed subdivisions of dilated simplices, the previous two corollaries yield the following.

\begin{cor}\label{cor:genericmixedcells}
  In any regular fine mixed subdivision $\Sigma$ of $n \Delta_{d-1}$ the collection of coarse
  types, counted with multiplicities, is independent of the subdivision.  In particular, $f_i(\Sigma)$, the number of $i$-dimensional faces of $\Sigma$ is independent of the subdivision, and is given by
    \[
        f_i(\Sigma) = \sum_{\ell = 1}^d \binom{n-2+\ell}{n-1}\binom{\ell -
        1}{i}.
    \]
\end{cor}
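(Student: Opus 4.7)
The plan is to transfer Corollaries~\ref{cor:coarsetypes_inv} and~\ref{cor:generic_f_vector} from tropical hyperplane arrangements to mixed subdivisions via the Cayley trick and Develin--Sturmfels duality recalled in Section~\ref{sec:Mixed}. Any regular fine mixed subdivision $\Sigma$ of $n\Delta_{d-1}$ arises as $\Sigma_\A$ for some sufficiently generic arrangement $\A$ of $n$ max-tropical hyperplanes in $\T^{d-1}$, and the composite correspondence $\CD_\A \to \Delta_\A \to \Sigma_\A$ matches cells of $\CD_\A$ with cells of $\Sigma_\A$ under the dimension rule $k \leftrightarrow d-1-k$.

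Next I would verify that this bijection identifies the tropical coarse type of a cell $C \in \CD_\A$ with the mixed-subdivision coarse type of the corresponding cell $\sigma \in \Sigma_\A$ from Definition~\ref{dfn:coarsetypemixed}. For vertices of $\Sigma_\A$ (equivalently, maximal cells of $\CD_\A$) this is the content of Propositions~\ref{prop:coarse:exponent} and~\ref{prop:latticepoints}, which parametrize both sides by compositions of $n$ into $d$ parts. For higher-dimensional faces the compatibility propagates because on both sides the coarse-type labeling is determined by componentwise maxima over the containing maximal cells: on the tropical side via Proposition~\ref{prop:type_labeling}, and on the mixed side by the labeling convention of Corollary~\ref{cor:cellres}. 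With this dictionary in place, Corollary~\ref{cor:coarsetypes_inv} immediately yields the independence of the multiset of coarse types of $\Sigma$ from the chosen subdivision.

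Finally, for the $f$-vector formula the dimension identity $f_i(\Sigma) = f_{d-1-i}(\CD_\A)$ combined with Corollary~\ref{cor:generic_f_vector} (substituting $k = d-1-i$) yields
\[
f_i(\Sigma) \ = \ \sum_{\ell=0}^{d-1-i} \binom{n+d-2-\ell}{n-1}\binom{d-1-\ell}{i}.
\]
Since $\binom{d-1-\ell}{i}$ vanishes for $\ell > d-1-i$, I can extend the summation to $\ell = 0, \dots, d-1$, and the change of index $\ell \mapsto d-\ell$ then produces the claimed closed form. The only genuinely new ingredient is the coarse-type compatibility under the Cayley/duality correspondence, and even this is largely forced by the fact that Definition~\ref{dfn:coarsetypemixed} was set up precisely to be Cayley-dual to the tropical coarse type; once that is secured, the remainder is a routine reindexing, so I do not foresee a substantive obstacle.
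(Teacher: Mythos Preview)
Your proposal is correct and follows essentially the same route as the paper, which simply states that the corollary is obtained from Corollaries~\ref{cor:coarsetypes_inv} and~\ref{cor:generic_f_vector} via the Cayley/Develin--Sturmfels correspondence of Section~\ref{sec:Mixed}; you have merely made explicit the dimension-reversing bijection and the coarse-type compatibility that the paper leaves implicit. The final reindexing can in fact be shortcut by observing that $f_i(\Sigma)=\beta_i(\m^n)$ directly from Corollary~\ref{cor:cellres_generic}, and this Betti number is computed in the displayed form $\sum_{\ell=1}^d \binom{n-2+\ell}{n-1}\binom{\ell-1}{i}$ inside the proof of Corollary~\ref{cor:generic_f_vector} before the substitution $i\mapsto d-1-k$ is made there.
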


\begin{rem}
  For the case of vertices, Corollary \ref{cor:genericmixedcells} was already established
  in \ref{prop:latticepoints}, where the it was shown that the $0$-dimensional cells in
  any fine mixed subdivision correspond to the lattice points of $n \Delta_{d-1}$.  The
  case of facets can also be proven directly using the following combination of mixed
  volume calculations and the switch duality of tropical complexes.  Given a
  fine mixed subdivision $\Sigma$ of $n \Delta_{d-1}$ we consider the corresponding fine
  mixed subdivision $\Sigma^*$ of $d \Delta_{n-1}$ induced by applying the Cayley trick
  and switching the factors in $\Delta_{n-1} \times \Delta_{d-1}$.  For nonnegative real
  numbers $\lambda_1, \dots, \lambda_d$, we have that the linear functional given by the
  volume $\vol(\lambda_1 \Delta_{n-1} + \cdots + \lambda_d \Delta_{n-1})$ can be expressed
  as a homogeneous polynomial of degree $n-1$ in the variables $\lambda_1, \dots,
  \lambda_d$.  The coefficient of the monomial $\lambda_1^{i_1} \lambda_2^{i_2} \cdots
  \lambda_d^{i_d}$ is called a mixed volume, and for any fine mixed subdivision of $d
  \Delta_{n-1}$ it is equal to the sum of the volumes of the mixed cells of the
  corresponding \emph{dual} coarse type.  In our case all fine mixed cells of a certain
  dual coarse type have the same volume, since the relevant data is given by the number of
  vertices in each factor, regardless of the labels on the vertices.  This implies that in
  any fine mixed subdivision the number of fine mixed cells with a certain dual coarse
  type is always the same.  But the collection of \emph{dual coarse} types for this fine
  mixed subdivision coincides with collection of \emph{coarse} types for the given fine
  mixed subdivision of $n \Delta_{d-1}$, proving our claim.

  In fact (for $k=0$), each coarse type shows up at most once and the collection of coarse
  types can be seen to coincide with the monomial generators of the ideal
  \[
  x_1 x_2 \cdots x_d \langle x_1, x_2, \dots, x_d \rangle^{n-1} \, .
  \]
  \noindent
  This follows from the fact that in this case the volume polynomial of the dual mixed
  subdivision is given by $(\lambda_1 + \cdots + \lambda_d)^{n-1}$ and the volume of a
  mixed cell of a particular dual coarse type is exactly the coefficient of the
  corresponding monomial.  We do not know of a similar mixed volume interpretation of the other face numbers.
\end{rem}

\subsection{Incidence structure of the bounded complex from the fine cotype ideal}

In \cite{BlockYu06} Block and Yu develop an algorithm to compute the bounded
complex of a generic tropical hyperplane arrangement $\A = \A(V)$ that draws
from computational commutative algebra.  There it is shown that the
bounded complex supports a minimal cellular resolution of a monomial ideal which we have called the fine cotype ideal $I_{\fct(\A)}$ associated to $\A$.  It turns out that this ideal is an initial ideal of the toric ideal $I_{n,d}$
for the vertices of $\Delta_{n-1} \times \Delta_{d-1}$.  The results in
\cite{BlockYu06} rely on the genericity of the arrangement in two ways: i) the
fact that the bounded complex supports a resolution of the fine cotype ideal
is proved by appealing to the \emph{polyhedral detour} to tropical convexity
presented in \cite{DevStu04}, and ii) the genericity is needed to guarantee
that the initial ideal is indeed monomial.

In this section we extend their algorithm to the case of non-generic
hyperplane arrangements. We bypass the two mentioned dependences on genericity
as follows.  In Section \ref{sec:resolutions} we have already shown that the bounded complex resolves the fine cotype ideal for an \emph{arbitrary} arrangement, using first principles in tropical convexity. As for ii), we next show that the fine
cotype ideal is the maximal monomial ideal contained in the initial ideal
associated to the weights $V$. In terms of polyhedral geometry, this
corresponds to the passage from the polyhedral subdivision $\Sigma_V$ of
$\Delta_{n-1} \times \Delta_{d-1}$ induced by $V$ to the \emph{crosscut
complex}.

The results of the previous sections cannot directly be turned into a practical algorithm
as the fine cotype ideal can only be determined \emph{after} computing the bounded complex
or, at least, the vertices of $\CD_\A$.  In the case of a generic arrangement of tropical
hyperplanes this problem is resolved in \cite{BlockYu06} as follows.  The ideal
\[
I_{n,d} \ = \ \ideal{x_{ik}x_{jl} - x_{il}x_{jk}: i,j \in [n], k,l \in [d] }
\]
of $2\times2$-minors of a general $n\times d$-matrix in $\field[x_{11},\dots,x_{nd}]$ is
the toric ideal associated to the vertices of the ordinary lattice polytope $\Delta_{n-1}
\times \Delta_{d-1}$. The initial ideal $\iform_V(I_{n,d})$ with respect to the weights
$V$ is a squarefree monomial ideal and, by a celebrated result of Sturmfels
\cite[Thm.~8.3]{stu96}, equal to the Stanley-Reisner ideal $I_\Sigma$ of the triangulation
$\Sigma$ of $\Delta_{n-1} \times \Delta_{d-1}$ induced by $V$; see
Section~\ref{sec:Mixed}.  The face poset of the bounded complex $\boundedCD_\A$ is
anti-isomorphic to the subposet of \emph{interior} cells of $\Sigma$. In short, there is a
bijection between $i$-cells of $\CD_\A$ and $(n+d-2-i)$-cells of $\Sigma$ which lie in the
interior of $\Delta_{n-1}\times\Delta_{d-1}$. The bijection takes a cell $C_T^\circ$ of
fine type $T$ to the cell of $\Sigma$ with vertices $(e_i,e_j)$ for $T_{ij} = 1$. Finally,
the Alexander dual of $I_\Sigma$ is the monomial ideal generated by the fine cotypes of
the vertices of $\CD_\A$.

If the placement $V$ of the hyperplanes is not generic, then the induced subdivision
$\Sigma$ is not a triangulation. However, the above bijection is unaffected and, in
particular, the fine type of a vertex of $\CD_\A$ can be read off the corresponding facet
of $\Sigma$. We define the \emph{crosscut complex} $\ccut{\Sigma} \subseteq 2^{[n] \times
  [d]}$ to be the unique simplicial complex with the same vertices-in-facets incidences as
the polyhedral complex $\Sigma$.  The crosscut complex is a standard notion in
combinatorial topology and can be defined in more generality (see Bj\"{o}rner
\cite[pp.~1850ff]{bjorner95}).  The following observation is immediate from the
definitions.

\begin{prop}
  For $V$ an ordered sequence of $n$ points in $\T^{d-1}$ let $\A = \A(V)$ be the
  corresponding tropical arrangement and let $\Sigma$ be the regular subdivision of
  $\Delta_{n-1} \times \Delta_{d-1}$ induced by $V$. Then the fine cotype ideal $\fcI$ is
  Alexander dual to the Stanley-Reisner ideal of $\ccut{\Sigma}$.
\end{prop}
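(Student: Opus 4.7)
The plan is to verify the asserted Alexander duality by comparing the minimal generating sets of the two ideals directly, bypassing any appeal to initial ideals or to genericity of $V$. Both $\fcI$ and $I_{\ccut{\Sigma}}$ are squarefree monomial ideals in $\field[x_{ij} : (i,j) \in [n]\times[d]]$, and Alexander duality of such ideals is entirely determined at the level of ``minimal generators of one ideal'' versus ``complements of facets of the simplicial complex whose Stanley--Reisner ideal is the other.''

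First I would extract the minimal generators of $\fcI$. By Proposition~\ref{prop:cotype_labeling}, fine cotype is a labeling of $\CD_\A$ in which the label of a cell equals the minimum of the labels of the cells containing it, so $\fcI$ is generated by the monomials $\x^{\fct(v)}$ as $v$ ranges over the $0$-cells of $\CD_\A$. Setting $T := \ft_\A(v) \in \ZO^{n\times d}$ and $F_T := \{(i,j) : T_{ij} = 1\} \subseteq [n]\times[d]$, we have $\x^{\fct(v)} = \prod\{x_{ij} : (i,j) \notin F_T\}$. Since distinct vertices yield distinct matrices $T$ and hence distinct monomials, this list is in fact the minimal generating set.

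Next I would invoke the vertex--facet bijection recalled in the paragraph preceding the proposition: the $0$-cells of $\CD_\A$ correspond bijectively to the facets of $\Sigma$, with the vertex of fine type $T$ mapping to the facet on vertex set $F_T$, and this correspondence is unaffected by whether or not $V$ is generic. By definition the facets of the crosscut complex $\ccut{\Sigma}$ are precisely the vertex sets of the facets of $\Sigma$. Combining the two steps, the minimal generators of $\fcI$ are exactly the monomials $\x^{([n]\times[d])\setminus F}$ as $F$ ranges over the facets of $\ccut{\Sigma}$, which is precisely the defining description of the Alexander dual of $I_{\ccut{\Sigma}}$. The only point that needs care is the verification that every facet of $\Sigma$ lies in the image of the bijection, so that we recover \emph{all} minimal generators on the dual side; this is automatic because a facet of a polyhedral subdivision of $\Delta_{n-1}\times\Delta_{d-1}$ is full-dimensional, and hence its relative interior is contained in the interior of the ambient polytope.
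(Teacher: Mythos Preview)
Your proposal is correct and is essentially the argument the paper has in mind: the paper declares the proposition to be ``immediate from the definitions'' and gives no further proof, and your write-up is exactly the natural unpacking of that claim via the vertex--facet duality recalled in the preceding paragraph. One small quibble: the inference ``distinct vertices yield distinct matrices $T$ and hence distinct monomials, so this is the minimal generating set'' is not valid as stated (distinctness is weaker than pairwise incomparability), but minimality is not actually needed for the duality---it suffices that the vertex cotypes \emph{generate} $\fcI$ and that they coincide with the complements of the facets of $\ccut{\Sigma}$.
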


The crosscut complex encodes the information of which collections
of vertices lie in a common face. Hence, the crosscut complex is a
purely combinatorial object and does not see the affine structure of the
underlying polyhedral complex.

Algebraically, the initial ideal $\iform_V(I_{n,d})$ is a coherent $A$-graded
ideal in the sense of \cite[Ch.~10]{stu96} and encodes the corresponding
polyhedral subdivision $\Sigma$ induced by $V$ (see \cite[Thm.~10.10]{stu96}).
The ideal $\iform_V(I)$ of a toric ideal $I$ is generated by monomials and
binomials. Intuitively, the binomials encode the affine structure within cells
of $\Sigma$, that is, the affine dependencies, while the monomial generators
encode the Stanley-Reisner data for the crosscut complex. For an arbitrary
ideal $J$, denote by $M(J)$ the largest monomial ideal contained in $J$.

\begin{lem}
    Let $I_A$ be the toric ideal for $A \in \N^{d \times n}$ and for $\omega
    \in \R^n$ let $J = \iform_\omega(I_A)$ and $\Sigma$ the regular
    subdivision of $A$ induced by $\omega$. Then the radical of $M(J)$ is the
    Stanley-Reisner ideal of the crosscut complex of $\Sigma_\omega$, that is~
    \[
        I_{\Rad(M(J))} \ = \ I_{\ccut{\Sigma}} \, .
    \]
\end{lem}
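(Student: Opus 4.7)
The plan is to reduce the identity to a single combinatorial-algebraic characterization of squarefree monomials. Observe first that $M(J)$, being the largest monomial ideal inside $J$, is generated exactly by those monomials that happen to lie in $J$. Consequently its radical is the squarefree monomial ideal
\[
\Rad(M(J)) \ = \ \bigl\langle x^F : F \subseteq [n] \text{ and } x^u \in J \text{ for some } u \in \N^n \text{ with } \operatorname{supp}(u) = F \bigr\rangle,
\]
while $I_{\ccut{\Sigma}}$ is by definition the squarefree monomial ideal generated by $x^F$ for $F$ not contained in the vertex set of any cell of $\Sigma$. So the lemma is equivalent to the claim: some $x^u \in J$ with $\operatorname{supp}(u) = F$ exists if and only if $F$ lies in no cell of $\Sigma$.

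For the ``only if'' direction, suppose $x^u \in J$ and $\operatorname{supp}(u) \subseteq G$ for the vertex set $G$ of some cell of $\Sigma$. Regularity of $\Sigma$ furnishes an affine supporting functional $\lambda_G$ with $\lambda_G \cdot a_i = \omega_i$ for $i \in G$ and $\lambda_G \cdot a_i \le \omega_i$ otherwise. For any $v \in \N^n$ with $Av = Au$, this gives
\[
\omega \cdot v \ = \ \sum_i v_i \omega_i \ \ge \ \sum_i v_i (\lambda_G \cdot a_i) \ = \ \lambda_G \cdot Au \ = \ \omega \cdot u,
\]
so $u$ already minimizes $\omega$ on its fiber; but then $x^u$ is a standard monomial modulo $J$, contradicting $x^u \in J$.

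For the ``if'' direction, assume $F$ is contained in no cell of $\Sigma$ and let $C$ be the unique cell of $\Sigma$ containing the point $\tfrac{1}{|F|} \sum_{i \in F} a_i$ in its relative interior, with vertex set $G$. By assumption $F \not\subseteq G$. Taking $u = k \chi_F$ for $k$ sufficiently large, the semigroup $\N\{a_i : i \in G\}$ eventually absorbs $k A\chi_F$ by a standard Carath\'eodory-type argument, yielding $v \in \N^n$ with $\operatorname{supp}(v) \subseteq G$ and $Av = Au$. Comparing $\omega$-weights using the supporting functional of $C$ gives $\omega \cdot v = \lambda_G \cdot Av$ while $\omega \cdot u > \lambda_G \cdot Au$ (strict because $\operatorname{supp}(u) = F$ contains an index outside $G$ where $\omega$ strictly dominates $\lambda_G \cdot a_i$). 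Hence $\iform_\omega(x^u - x^v) = x^u$, placing $x^u$ in $J$, and therefore $x^F \in \Rad(M(J))$.

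The main obstacle is the lattice step in the second direction: when $A$ is not unimodular, the point $A \chi_F$ itself may fail to be an $\N$-combination of $\{a_i : i \in G\}$, so we cannot directly conclude that $x^F$ lies in $J$. Passing to a sufficiently large multiple $x^{kF}$ and then taking the radical absorbs this subtlety, and this is precisely why the lemma is stated for $\Rad(M(J))$ rather than $M(J)$ itself. In the unimodular setting of products of simplices that drives the applications of this paper, one has $k = 1$ throughout, and $M(J)$ is already the squarefree radical ideal $I_{\ccut{\Sigma}}$.
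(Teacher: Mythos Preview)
Your argument is correct and takes a genuinely different route from the paper's proof. The paper invokes Sturmfels' structural result \cite[Thm.~10.10]{stu96} expressing $J$ as an intersection $\bigcap_{\sigma\in\Sigma} J_\sigma$ of ideals indexed by the faces of $\Sigma$, each torus-isomorphic to $I_A + \langle x_i : i\notin\sigma\rangle$; it then computes the monomial part of each piece. Your proof bypasses this black box entirely and works directly with the supporting affine functionals that certify the cells of the regular subdivision. This makes the argument self-contained and, in particular, makes transparent \emph{why} the radical is needed: in the ``if'' direction you may have to pass to a multiple $k\chi_F$ before the fiber meets the sub-semigroup $\N\{a_j:j\in G\}$, and taking $\Rad$ absorbs that multiplicity. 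Your closing remark that $k=1$ suffices under unimodularity recovers exactly what the paper needs for its application to $I_{n,d}$.

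Two small points you should tighten. First, the ``Carath\'eodory-type argument'' is really just: since the $a_i$ are integer vectors and the barycenter lies in $\operatorname{conv}\{a_j:j\in G\}$, one may choose \emph{rational} barycentric coefficients $\mu_j$, and clearing denominators produces the required $k$ and $v$. Stating it this way removes any appearance of hand-waving. Second, your comparison $\omega\cdot v = \lambda_G\cdot Av$ versus $\omega\cdot u > \lambda_G\cdot Au$ tacitly uses $|u|=|v|$ to handle the constant term of the affine functional $\lambda_G$. This holds because toric configurations $A$ in this setting are assumed homogeneous (the columns lie on an affine hyperplane off the origin), so $Au=Av$ forces $|u|=|v|$; it is worth saying so explicitly.
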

\begin{proof}
    By Theorem~10.10 of \cite{stu96}, the ideal $J$ is the intersection of
    ideals $J_\sigma$ indexed by the faces $\sigma \in \Sigma$ and
    $J_\sigma$ is torus isomorphic to $I_\sigma = I_A + \ideal{ x_i : i
    \not\in \sigma}$. Hence, we have
    \[
        M(J) \ = \ \bigcap_{\sigma \in \Sigma} M(I_\sigma).
    \]
    Under the projection map $\field[x_1,\dots,x_n] \rightarrow \field[x_i : i
    \in \sigma]$ that takes $x_i \mapsto 0$ for $i \not\in \sigma$, $I_\sigma$
    is isomorphic to the toric ideal corresponding to the columns of $A$
    indexed by $\sigma \subseteq [n]$. Hence, a monomial $\x^\a$ is contained
    in $I_\sigma$ if and only if $\mathrm{supp}(\x^a) \not\subseteq \sigma$.
    Therefore $M(I_\sigma) = \ideal{ x_i : i \not\in\sigma}$ and for $\tau
    \subseteq [n]$ we have that $\x^\tau \in \Rad(M(J))$ if and only if $\tau$
    is not contained in any cell of~$\Sigma$.
\end{proof}

A special case of this lemma appears in \cite[Lem.~4.5.4]{sst00}.
Since the toric ideal $I_{n,d}$ is unimodular, the ideal
$M(\iform_V(I_{n,d}))$ is automatically squarefree and we obtain the following
corollary.

\begin{cor} \label{cor:bounded_compute} Let $\A = \A(V)$ be a tropical hyperplane
  arrangement and let $J = \iform_V(I_{n,d})$ the initial ideal of $I_{n,d}$ for the
  weights $V$.  Then the Alexander dual of the squarefree monomial ideal $M(J)$ is the
  fine cotype ideal of $\A$.
\end{cor}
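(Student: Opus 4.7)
The plan is to derive the identity in three short steps by combining the preceding lemma with the proposition identifying $\fcI$ as the Alexander dual of $I_{\ccut{\Sigma}}$, relying crucially on the unimodularity of the toric ideal $I_{n,d}$.

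First I would invoke the fact that the vertex configuration of $\Delta_{n-1}\times\Delta_{d-1}$ is totally unimodular, which forces every initial ideal $\iform_V(I_{n,d})$ to be generated by squarefree monomials and binomials; this is a classical consequence of the theory developed in \cite[Chap.~8]{stu96}. Consequently the monomial subideal $M(J) \subseteq J$ is itself a squarefree monomial ideal, so $M(J) = \Rad(M(J))$.

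Then I would apply the preceding lemma (with $A$ the vertex matrix of $\Delta_{n-1}\times\Delta_{d-1}$ and $\omega = V$) to conclude
\[
M(J) \ = \ \Rad(M(J)) \ = \ I_{\ccut{\Sigma}} \, ,
\]
the Stanley-Reisner ideal of the crosscut complex of the regular subdivision $\Sigma$ of $\Delta_{n-1}\times\Delta_{d-1}$ induced by $V$. By the proposition stated just before the lemma, the Alexander dual of $I_{\ccut{\Sigma}}$ is precisely the fine cotype ideal $\fcI$ of $\A$, and chaining the two identifications yields the claim.

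The main obstacle is the first step: ensuring that $M(J)$ is squarefree without assuming that $\Sigma$ is a triangulation. Once unimodularity is in hand, the rest is straightforward assembly of results already in the excerpt; without it, one could potentially face higher multiplicities in the monomial generators of $M(J)$ and the direct identification with the Stanley-Reisner ideal of $\ccut{\Sigma}$ would require passing through the radical, producing only the Alexander dual of $\Rad(M(J))$ rather than that of $M(J)$ itself.
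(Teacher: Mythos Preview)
Your proposal is correct and matches the paper's own reasoning essentially verbatim: the paper simply remarks that since $I_{n,d}$ is unimodular the ideal $M(\iform_V(I_{n,d}))$ is automatically squarefree, then combines the preceding lemma (giving $\Rad(M(J)) = I_{\ccut{\Sigma}}$) with the earlier proposition (identifying $\fcI$ as the Alexander dual of $I_{\ccut{\Sigma}}$). Your identification of unimodularity as the key point is exactly the observation the paper makes.
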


In the case that $\A$ is generic, the ideal $M(J)$ coincides with the initial ideal
$\iform_V(I_{n,d})$ and hence recovers the main result of \cite{BlockYu06}. Moreover, it
entails the modification of the algorithm in \cite{BlockYu06} by replacing the ideal
$\iform_V(I_{n,d})$.  We describe our Algorithm~\ref{algo:poset} below.  For an overview
of other algorithms to produce the same output via techniques from polyhedral
combinatorics and algorithmic geometry see \cite{Joswig09}.

\begin{algorithm}[H]
  \caption{Computing the face poset of the tropical complex}
  \label{algo:poset}
  \dontprintsemicolon

  \Input{matrix $V\in\R^{n\times d}$}
  \Output{face poset of $\boundedCD_{\A(V)}$}

  calculate the initial ideal $J = \iform_V(I_{n,d})$ \;
  calculate $M(J) = \ideal{ \x^\a : \x^\a \in J }$ \;
  calculate the Alexander dual $M(J)^*$ \;
  find a minimal fine graded resolution to determine the face poset
  of $\boundedCD_{\A(V)}$ \;
\end{algorithm}

An algorithm for calculating $M( \cdot )$ for a toric ideal is given in
\cite[Algo.~4.4.2]{sst00} in terms of elimination theory.  Computing the Alexander dual of
a finite simplicial complex can be accomplished via an algorithm of Lawler
\cite{Lawler66}.  In \cite{LaScalaStillman98} La Scala and Stillman give an overview of
methods to compute minimal resolutions.  To illustrate the results in this section, we
show an example computed with the computer algebra system {\tt Macaulay2}~\cite{M2}.

\begin{figure}[htb]
  \includegraphics[scale=1]{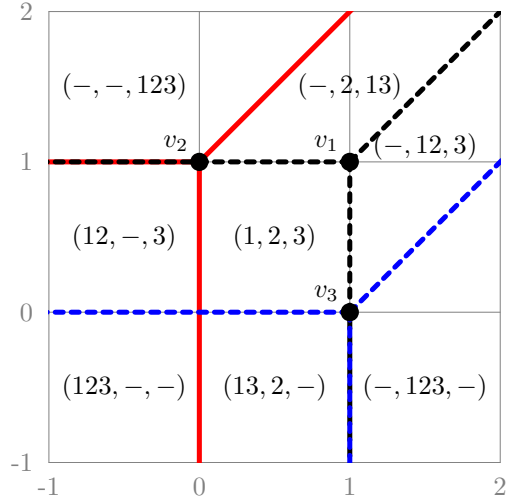}
  \caption{Non-generic arrangement in $\T^2$ with labeling by fine type.}
  \label{fig:non_generic}
\end{figure}

\begin{example}
  Consider the points $v_1=(0,1,1)$, $v_2=(0,0,1)$, and $v_3=(0,1,0)$ in $\T^2$.  The
  induced type decomposition is shown in Figure~\ref{fig:non_generic}.  We have
  \begin{align*}
    I_{3,3} \ = \ \langle&
    x_{11}x_{22}-\underline{x_{12}x_{21}},\
    x_{11}x_{33}-\underline{x_{13}x_{31}},\
    \underline{x_{12}x_{31}-x_{11}x_{32}},\\
    &x_{12}x_{33}-\underline{x_{13}x_{32}},\
    \underline{x_{13}x_{21}-x_{11}x_{23}},\
    x_{13}x_{22}-\underline{x_{12}x_{23}},\\
    &x_{21}x_{33}-\underline{x_{23}x_{31}},\
    x_{22}x_{31}-\underline{x_{21}x_{32}},\
    x_{22}x_{33}-\underline{x_{23}x_{32}}
    \rangle \, ,
  \end{align*}
  where the initial forms of the generators with respect to the weight matrix
  $V=\left(\begin{smallmatrix}v_1\\ v_2\\ v_3\end{smallmatrix}\right)\in\R^{3\times 3}$
  are underlined; these generate the toric ideal $J$.  Its maximal monomial subideal
  equals
  \[
  M(J) \ = \ \langle
  x_{12}x_{21},\ x_{12}x_{23},\ x_{13}x_{31},\ x_{23}x_{31},\ x_{13}x_{32},\
  x_{21}x_{32},\ x_{23}x_{32}
  \rangle \, ,
  \]
  and this is squarefree.  Its Alexander dual is
  \[
  M(J)^* \ = \ \langle
  x_{13}x_{21}x_{23},\ x_{12}x_{13}x_{23}x_{32},\ x_{12}x_{31}x_{32},\
  x_{21}x_{23}x_{31}x_{32} \rangle \, .
  \]
  These four generators of $M(J)^*$ encode the fine cotypes of the points $v_3$, $(0,0,0)$,
  $v_2$, and $v_1$, respectively.  Setting $S=\field[x_{11},x_{12},\dots,x_{33}]$ we
  obtain the minimal free resolution
  \[
  0 \to S \stackrel{\phi_3}{\longrightarrow} S^4 \stackrel{\phi_2}{\longrightarrow} S^4 \stackrel{\phi_1}{\longrightarrow} I
  \to 0 \, ,
  \]
  where the non-trivial differentials $\phi_i$ are given by the matrices
  \begin{align*}
    \phi_1 \ &= \
    \begin{pmatrix}
      x_{13}x_{21}x_{23} & x_{12}x_{31}x_{32} & x_{21}x_{23}x_{31}x_{32} &
      x_{12}x_{13}x_{23}x_{32}
    \end{pmatrix} \\
    \phi_2 \ &= \
    \begin{pmatrix}
      0             & -x_{31}x_{32}  & -x_{12}x_{32} & 0               \\
      -x_{21}x_{23}   & 0            & 0           & -x_{13}x_{23} \\
      x_{12}         & x_{13}        & 0           & 0               \\
      0             & 0            & x_{21}       & x_{31}
    \end{pmatrix} \\
    \phi_3 \ &= \
    \begin{pmatrix}
      -x_{13}\\
      x_{12} \\
      -x_{31} \\
      x_{21}
    \end{pmatrix} \, .
  \end{align*}
  These matrices are to be multiplied to column vectors from the left.  The non-zero finely graded Betti numbers are
  \begin{align*}
    &\beta_{0,(12,13,23,32)} = \beta_{0,(12,31,32)} = \beta_{0,(21,23,31,32)} = \beta_{0,(13,21,23)} = 1 \\
    &\beta_{1,(12,13,23,31,32)} =  \beta_{1,(12,21,23,31,32)} = \beta_{1,(13,21,23,31,32)} = \beta_{1,(12,13,21,23,32)} = 1 \\
    &\beta_{2,(12,13,21,23,31,32)} = 1 \, ,
  \end{align*}
  where, for example, $(12,13,23,32)$ is the squarefree monomial
  $x_{12}x_{13}x_{23}x_{32}$ corresponding to the point $(0,0,0)$.  Note that we are
  resolving the ideal $M(J)^*$ rather than the quotient $S/M(J)^*$ (which would yield a
  shift of +1 in the first coordinate of each Betti number).  The non-zero coarsely graded
  Betti numbers are then
  \[
  \beta_{0,3} = \beta_{0,4} = 2, \ \beta_{1,5} = 4, \ \beta_{2,6} = 1
  \, .
  \]
\end{example}

\section{Further remarks and open questions}\label{sec:final}

\noindent
Having constructed cellular resolutions of ideals arising from regular mixed subdivisions
of dilated simplices, a natural question to ask is if the assumption of regularity is
really necessary.  The relevant properties of our subdivisions were established by
considering them as induced by arrangements of tropical hyperplanes, and hence these
subdivisions were always regular.  However, the construction of a labeled complex from an
arbitrary mixed subdivision of $n\Delta_{d-1}$ still makes sense, and it is an open
question (as far as we know) whether these also support cellular resolutions.  As a
special case, in light of Proposition \ref{prop:latticepoints} we can ask whether \emph{any} fine mixed subdivision of $n \Delta_{d-1}$ supports a minimal cellular resolution of $\langle x_1, \dots, x_d \rangle^n$.

A connection to tropical geometry is provided by the \emph{tropical oriented matroids} of
Ardila and Develin from~\cite{AD09}. There the authors introduce an axiomatic approach to
the study of (fine) types, with a list of properties which they show are satisfied by the
collection of fine types arising from an arrangement of tropical hyperplanes.  It is
conjectured that all abstract oriented matroids are realized by arbitrary subdivisions,
and if this were the case we might think of the ideals described in the previous paragraph
as `tropical oriented matroid ideals'.  A further task would be to relate the algebraic
properties of these ideals with the combinatorial properties of the underlying matroid, in
the spirit of \cite{NovPosStu}.

As mentioned above, another unresolved question is to characterize which monomial ideals
arise as coarse type ideals for some tropical hyperplane arrangement.  We have seen that
certain necessary properties are easy to deduce but it seems difficult to provide a
complete classification.  Do these ideals fit into some other well-known class?

\bibliographystyle{siam}
\bibliography{TropicalResolutions}

\end{document}
